\newcommand{\moins}{\mathbin{\fgebackslash}}
\let\cal\mathcal
\newtheorem{theorem}[equation]{Theorem}
 \newtheorem{proposition}[equation]{Proposition}
 \newtheorem{corollary}[equation]{Corollary}  
\newtheorem{conjecture}[equation]{Conjecture}
\theoremstyle{definition}
\newtheorem{question}[equation]{Question}
\newtheorem{remark}[equation]{Remark}
\newtheorem{example}[equation]{Example}
\theoremstyle{remark}
\def\jcdot{\scriptscriptstyle\bullet}
\def\invlim{\mathop{\vtop{\ialign{##\crcr$\hfill{\lim}\hfil$\crcr
\noalign{\kern1pt\nointerlineskip}\leftarrowfill\crcr\noalign
{\kern -3pt}}}}\limits}
\def\dirlim{\mathop{\vtop{\ialign{##\crcr$\hfill{\lim}\hfil$\crcr
\noalign{\kern1pt\nointerlineskip}\rightarrowfill\crcr\noalign
{\kern -3pt}}}}\limits} 
\def\lomapr#1{\smash{\mathop{\relbar\joinrel\longrightarrow}\limits^{#1}}}
\def\phi{\varphi} 
\def\epsilon{\varepsilon}
\newcommand{\ovk}{\overline{K} }
\newcommand{\dr}{\operatorname{dR} }
      \newcommand{\nr}{\operatorname{nr} }   
 \newcommand{\colim}{\operatorname{colim} }
 \newcommand{\proeet}{\operatorname{pro\acute{e}t} } 
 \newcommand{\eet}{\operatorname{\acute{e}t} }
 \newcommand{\Hom}{{\rm{Hom}} }
  \newcommand{\Hhom}{{\cal Hom}} 
 \newcommand{\Ext}{\operatorname{Ext} }
\newcommand{\Gal}{\operatorname{Gal} }
\newcommand{\can}{ \operatorname{can} }
\newcommand{\synt}{ \operatorname{syn} }
\newcommand{\st}{\operatorname{st} }
 \newcommand{\kker}{\operatorname{Ker} }
 \newcommand{\crr}{\operatorname{cr} }
  \newcommand{\hk}{\operatorname{HK} }
   \newcommand{\LL}{\operatorname{L} }
 \newcommand{\so}{{\mathcal O}}
\newcommand{\sd}{{\mathcal{D}}}
\newcommand{\wotimes}{\widehat{\otimes}}
 \newcommand{\wh}{\widehat}
   \numberwithin{equation}{section}
\def\R{{\mathrm R}}
\def\O{{\cal O}}
  \def\B{{\bf B}}
\def\Q{{\bf Q}} \def\Z{{\bf Z}}
\def\C{{\bf C}}
\def\N{{\bf N}}
\def\O{{\cal O}}
\def\Qbar{\overline{\bf Q}}
\def\ainf{{\bf A}_{{\rm inf}}}
\def\bcris{{\bf B}_{{\rm cris}}} \def\acris{{\bf A}_{{\rm cris}}}
\def\bst{{\bf B}_{{\rm st}}}
\def\brig{{\bf B}_{{\rm rig}}}
\def\bdr{{\bf B}_{{\rm dR}}}
\def\bpdr{{\bf B}_{{\rm pdR}}}
\def\bpFF{{\bf B}_{{\rm pFF}}}
\def\bFF{{\bf B}_{{\rm FF}}}
\def\Bcris{{\mathbb B}_{{\rm cris}}} 
\def\Bdr{{\mathbb B}_{{\rm dR}}}
\def\Bst{{\mathbb B}_{{\rm st}}}
\def\rg{{\rm R}\Gamma}
\def\epsilon{\varepsilon}
\def\dual{{\boldsymbol *}}
\let\@wraptoccontribs\wraptoccontribs
\numberwithin{equation}{section}
\begin{document}
\title[Hodge Theory of  $p$-adic  analytic varieties: a survey]
 {Hodge Theory of  $p$-adic analytic varieties: a survey}
 \author{Pierre Colmez} 
\address{CNRS, IMJ-PRG, Sorbonne Universit\'e, 4 place Jussieu, 75005 Paris, France}
\email{pierre.colmez@imj-prg.fr} 
\author{Wies{\l}awa Nizio{\l}}
\address{CNRS, IMJ-PRG, Sorbonne Universit\'e, 4 place Jussieu, 75005 Paris, France}
\email{wieslawa.niziol@imj-prg.fr}
 \date{\today}
\thanks{P.C. and W.N.'s research was supported in part by the Simons Foundation.}
\maketitle
 \begin{abstract}
Hodge Theory of $p$-adic analytic varieties was initiated by Tate in his 1967 paper 
 on
$p$-divisible groups, where he conjectured the existence of a Hodge-like decomposition for the
$p$-adic \'etale cohomology of proper analytic varieties.  
Tate's conjecture was refined
by Fontaine who gave the theory its definite shape.  A lot of work has been done
for algebraic varieties and a number of proofs of Fontaine's conjectures
have been obtained between years 1985 and 2011.  But the study of Hodge Theory
of $p$-adic analytic varieties started really only in 2011 with Scholze's proof
of Tate's conjecture using perfectoid methods. Methods  that  opened the way to
an avalanche of results.

In this paper, we survey
our results and conjectures (comparison theorems
and their geometrization, dualities, etc.), focusing on the case of nonproper analytic 
varieties, where a number of new phenomena occur.  
We also describe the new objects that appeared along the way.
\end{abstract}

\tableofcontents

\section{Introduction}
\subsection{Complex periods}
\subsubsection{${\cal C}^\infty$-manifolds}
If $M$ is a compact, connected, orientable ${\cal C}^\infty$-manifold of dimension~$d$, 
the classical theorem of de Rham states that
integrating a closed differential form along  a cycle without boundary induces
a pairing $$H^i_{{\rm dR},{\cal C}^\infty}(M)\times H_i(M,\Z)\to{\bf R}$$ between de Rham cohomology
and singular homology, which identifies
$H^i_{{\rm dR},{\cal C}^\infty}(M)$ with ${\rm Hom}(H_i(M,\Z),{\bf R})$.  This can be rephrased 
using Betti cohomology $H^i_{\rm B}(M,\Q)\simeq {\rm Hom}(H_i(M,\Z),\Q)$
by saying that we have a functorial  period isomorphism
\begin{equation}\label{derham1}
H^i_{{\rm dR},{\cal C}^\infty}(M)\simeq {\bf R}\otimes_\Q H^i_{\rm B}(M,\Q),
\end{equation}
a statements that can be proved, following Weil (see \cite{CW}, Weil 18/01/47, Weil 02/02/47), 
by means of the
${\cal C}^\infty$-Poincar\'e Lemma.

Moreover $H^d_{{\rm dR},{\cal C}^\infty}(M)\simeq\R$ and $H^d_{\rm B}(M)\simeq\Q$, and the
resulting
cup-products $H^i_{{\rm dR},{\cal C}^\infty}(M)\otimes H^{d-i}_{{\rm dR},{\cal C}^\infty}(M)\to
\R$ and $H^i_{\rm B}(M)\otimes H^{d-i}_{\rm B}(M)\to \Q$
are perfect dualities (Poincar\'e dualities).

The isomorphism~(\ref{derham1}) extends to manifolds which are countable at infinity but, since
the cohomology groups can now be infinite dimensional, one needs to take a completed tensor
product. In this noncompact situation we have at our disposal other cohomology theories,
using classes with compact support, and Poincar\'e duality is now a perfect duality
between cohomology and cohomology with compact support.

\subsubsection{Complex algebraic and analytic varieties}
Now, if $M=X(\C)$ is the ${\cal C}^\infty$-manifold defined by the $\C$-points 
of a proper smooth algebraic variety $X$
defined over a subfield $K$ of $\C$, the de Rham cohomology of $M$ can be recovered~\cite{GdR} from the
cohomology of the algebraic de Rham complex
$$\rg_{\rm dR}(X):=\R\Gamma(X, \O\to\Omega^1\to\Omega^2\to\cdots)$$
 of $X$: we have functorial isomorphisms
$$\C\otimes_K H^i_{\rm dR}(X)\simeq \C\otimes_{\bf R} H^i_{{\rm dR},{\cal C}^\infty}(X(\C)).
$$  
Combined with the 
theorem of de Rham from \eqref{derham1}, this gives functorial isomorphisms 
\begin{equation}\label{ht2}
\C\otimes_KH^i_{\rm dR}(X)\simeq
\C\otimes_\Q H^i_{\rm B}(X(\C),\Q).
\end{equation}
  The coefficients of the matrix of this isomorphism
in bases over $K$ and $\Q$ respectively are called ``periods''; for example $2\pi i$ is a period
of ${\bf P}^1$ (for $H^2$) or of ${\bf G}_m$ (for $H^1$, but ${\bf G}_m$ is not proper),
and $\frac{\Gamma(1/4)\Gamma(1/2)}{\Gamma(3/4)}$
 is a period of the elliptic curve $y^2=x^3-x$ (for $H^1$).
See~\cite{KZ} for general conjectures about these numbers. 

The algebraic de Rham cohomology is endowed with a natural filtration (the Hodge filtration)
given by the  filtration of the de Rham complex:
$${\rm Fil}^i\rg_{\rm dR}(X):=\rg(X,0\to\cdots\to 0\to\Omega^i\to\Omega^{i+1}\to\cdots)$$
 Hodge theory provides a canonical splitting of this filtration over $\C$ by means of
harmonic forms of type $(p,q)$, which gives a functorial (Hodge) decomposition
\begin{equation}\label{ht1}
\C\otimes_\Q H^i_{\rm B}(X(\C),\Q)\simeq\oplus_{p+q=i}\C\otimes_K H^p(X,\Omega^q)
\end{equation}
This decomposition does not, in general, come from a decomposition over $K$.

Most of the above extends to compact complex analytic varieties (taking $K=\C$), but the existence of
the Hodge decomposition can fail if the variety is not K\"ahler.

\subsection{$p$-adic periods}
Now, let $p$ be a prime and let 
$\C_p$ be the completion of an algebraic closure $\Qbar_p$ of $\Q_p$ for the $p$-adic valuation,
and $\Q_p^{\rm nr}\subset\Qbar_p$ be the maximal unramified extension of $\Q_p$.

Let $K\subset\C_p$ be a finite extension of $\Q_p$, and let $G_K={\rm Aut}_{\rm cont}(\C_p/K)=
{\rm Gal}(\Qbar_p/K)$ be the absolute Galois group of $K$.

 The Hodge theory for varieties over $K$ was initiated by Tate in his 
incredibly influential paper~\cite{Tate}
and given a definitive form by Fontaine (see ~\cite{Fo82,Fo83,Fo94b}) who introduced various rings 
$\bcris$, $\bst$, $\bdr$, in which the periods
of algebraic varieties over $K$ should live. The situation is more complicated here than over $\C$:   The results over $\C$ sketched above show that the periods of
varieties defined over subfields of $\C$ live in $\C$. But 
 in the $p$-adic case, as Tate showed,  these periods do not, in general,
live in $\C_p$ (for example, the $p$-adic avatar of $2\pi i$ does not live in $\C_p$ -- this is a
rephrasing of Tate's result~\cite{Tate} that\footnote{If $r\in\Z$, and $V$ is a $G_K$-module,
we let $V(r)$ be its $r$-th Tate twist: i.e., the module $V$ with action
of $\sigma\in G_K$ multiplied by $\chi^r(\sigma)$, where 
$\chi:G_K\to\Z_p^\dual$ is the cyclotomic character.} $H^0(G_K,\C_p(1))=0$).

If $X$ is a smooth and proper algebraic variety defined over $K$,
there is no direct analog of $H^i_{\rm B}(X(\C),\Q)$; what plays its role in the $p$-adic
world is the \'etale cohomology $H^i_{\eet}(X_{\C_p},\Q_p)$. Indeed, if $X$ is defined over a subfield $F$ of $K$
that embeds into $\C$, and if we choose an embeding into $\C$
of the algebraic closure $\overline F$ of $F$ in $\C_p$, we have natural isomorphisms 
(Artin~\cite{Art} for the last one)
$$H^i_{\eet}(X_{\C_p},\Q_p)\simeq H^i_{\eet}(X_{\overline F},\Q_p)\simeq 
H^i_{\eet}(X_\C,\Q_p)\simeq \Q_p\otimes_\Q H^i_{\rm B}(X(\C),\Q)$$

Fontaine's rings $\bcris\subset\bst\subset\bdr$ are topological rings endowed
with compatible continuous actions of $G_K$, and containing a $p$-adic avatar $t$ of $2\pi i$
(Fontaine's $2\pi i$) on which $\sigma\in G_K$ acts by $\sigma(t)=\chi(\sigma)t$.
Moreover $\bdr^{G_K}=K$ and $\bcris^{G_K}=\bst^{G_K}=K\cap\Q_p^{\rm nr}$,
and:

$\bullet$
$\bdr=\bdr^+[\frac{1}{t}]$, where $\bdr^+$ is complete for the $t$-adic topology
which is stronger that the natural topology (for which $\ovk$ is dense~\cite{dense}),
and we have $\bdr^+/t=\C_p$.  The filtration $\bdr^i:=t^i\bdr^+$ is stable by $G_K$.

$\bullet$ $\bst=\bcris[\log\tilde p]$ is endowed with a Frobenius $\varphi$
satisfying $\varphi(t)=pt$, $\varphi(\log\tilde p)=p\,\log\tilde p$,
and a $\bcris$-derivation $N$ with $N(\log\tilde p)=-1$ (hence $N\varphi =p\varphi N$);
$\varphi$ and $N$ commute with $G_K$.

$\bullet$ $\Q_p$ can be recovered inside $\bst$ using the above structure:  we have the
fundamental exact sequence 
$$0\to\Q_p\to \bst^{N=0,\varphi=1}\to\bdr/\bdr^+\to 0$$
Hence, in particular, $\Q_p={\rm Fil}^0(\bst^{N=0,\varphi=1})$. 

\begin{conjecture}\label{ht3}
{\rm (Fontaine~\cite{Fo82,Fo94b})}
We have functorial  period isomorphisms commuting with all structures {\rm ($G_K$, filtration, $\varphi$
and $N$)}
\begin{align*}
{\rm(C_{dR})}\quad &\bdr\otimes_KH^i_{\rm dR}(X)\simeq \bdr\otimes_{\Q_p}H^i_{\eet}(X_{\C_p},\Q_p)\\
{\rm(C_{st})}\quad &\bst\otimes_{\Q_p^{\rm nr}}H^i_{\rm HK}(X)\simeq \bst\otimes_{\Q_p}H^i_{\eet}(X_{\C_p},\Q_p)
\end{align*}
\end{conjecture}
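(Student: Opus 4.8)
The plan is to establish $(C_{dR})$ directly on the pro-\'etale site by means of a ``big'' period sheaf and a Poincar\'e lemma, and then to deduce $(C_{st})$. First I would record the reduction of $(C_{st})$ to $(C_{dR})$. Granting $(C_{dR})$, the \'etale cohomology $V:=H^i_{\eet}(X_{\C_p},\Q_p)$ is a de Rham representation of $G_K$; using de Jong's alterations one may assume $X$ has a semistable model $\mathfrak{X}/\O_K$, so that $H^i_{\hk}(X)$ is the log-crystalline cohomology of the special fiber, a filtered $(\varphi,N)$-module over $\Q_p^{\rm nr}$ related to $H^i_{\dr}(X)$ by the Hyodo--Kato isomorphism $H^i_{\hk}(X)\otimes_{\Q_p^{\rm nr}}K\simeq H^i_{\dr}(X)$. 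Then $(C_{st})$ amounts to the assertion that $H^i_{\hk}(X)$ is weakly admissible and canonically isomorphic to $D_{\rm st}(V):=(\bst\otimes_{\Q_p}V)^{G_K}$; granting this, $V$ is recovered by Fontaine's functor through the fundamental exact sequence $0\to\Q_p\to\bst^{N=0,\varphi=1}\to\bdr/\bdr^+\to 0$ recalled above, and tensoring back up gives the comparison. So the remaining work is to prove $(C_{dR})$, to supply the $p$-adic monodromy/weak admissibility input, and to match the monodromy operators.

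For $(C_{dR})$ itself I would work on $X_{\C_p,\proeet}$ and construct the structural de Rham period sheaf $\so\Bdr$: roughly, the sheafification of Fontaine's de Rham construction applied to the completed pro-\'etale structure sheaf, equipped with a decreasing filtration and a $\Bdr^+$-linear integrable connection $\nabla\colon\so\Bdr\to\so\Bdr\otimes_{\so_X}\Omega^1_X$. The key local fact is the \emph{Poincar\'e lemma}: the de Rham complex $\mathrm{DR}(\so\Bdr):=[\so\Bdr\to\so\Bdr\otimes_{\so_X}\Omega^1_X\to\cdots]$ is a resolution of the constant sheaf $\Bdr$, strictly compatibly with the filtrations (the $t$-adic one on $\Bdr$, the evident product filtration on the complex). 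This is local on a smooth affinoid admitting an \'etale map to a torus: after adjoining all $p$-power roots of the coordinates one reaches a perfectoid cover on which $\so\Bdr$ becomes explicit, $\nabla$ acquires a visible contracting homotopy, and the filtered refinement follows by bookkeeping.

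Next I would compute $\R\Gamma_{\proeet}(X_{\C_p},\so\Bdr)$ in two ways. On one side, the Poincar\'e lemma identifies it with $\R\Gamma_{\proeet}(X_{\C_p},\Bdr)$, hence with $\bdr\otimes_{\Q_p}\R\Gamma_{\eet}(X_{\C_p},\Q_p)$ --- writing $\Bdr=\colim_n t^{-n}\Bdr^+$, using the comparison of pro-\'etale with \'etale cohomology for $\Q_p$-coefficients, and using the primitive comparison theorem identifying $\R\Gamma_{\proeet}(X_{\C_p},\wh\O_X)$ with $\C_p\otimes_{\Q_p}\R\Gamma_{\eet}(X_{\C_p},\Q_p)$ (up to the customary bounded-torsion issues at finite level). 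On the other side, pushing forward along $\nu\colon X_{\C_p,\proeet}\to X_{\C_p,\eet}$ (and then to $X_{K,\eet}$) one shows $\R\nu_*\so\Bdr\simeq\so_X\wotimes_K\bdr$ with its connection; this is relative Fontaine theory, whose engine is the sheaf-theoretic form of the Tate--Sen computation of $H^\bullet(G_K,\C_p(r))$. It follows that $\R\Gamma_{\proeet}(X_{\C_p},\so\Bdr)\simeq\bdr\wotimes_K\R\Gamma_{\dr}(X/K)$, and passing to cohomology --- using degeneration of the Hodge--de Rham spectral sequence after $\otimes_K\bdr$, i.e. strictness of the Hodge filtration --- gives $\bdr\otimes_K H^i_{\dr}(X)$. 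Matching the two descriptions, and checking that every identification is $G_K$-equivariant and respects the filtrations (and, after the reduction above, $\varphi$ and $N$), yields the period isomorphism.

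The main obstacle is the package of facts about the big period sheaves, above all the computation $\R\nu_*\so\Bdr\simeq\so_X\wotimes_K\bdr$ and, just below it, the Poincar\'e lemma: these rest on controlling pro-\'etale cohomology of perfectoid towers --- almost purity and the relative Tate--Sen theory computing $H^j(G_K,\C_p(r))$ --- and are the genuinely non-formal heart of the argument. A secondary difficulty, specific to $(C_{st})$, is identifying the monodromy operator $N$ on $H^i_{\hk}(X)$ coming from the log structure of $\mathfrak{X}$ with the one induced by $N(\log\tilde p)=-1$ on the period ring, and proving weak admissibility of $H^i_{\hk}(X)$; this is where the semistable model and the Hyodo--Kato construction really enter, and where a fully conceptual proof is hardest to come by.
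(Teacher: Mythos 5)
The statement you are proving is stated in the paper as a conjecture and is not proved there: Remark~1.4(ii) records that it ``is now a theorem'' and cites Tsuji, Faltings, Nizio{\l} and Beilinson, characterizing those proofs as constructing a period morphism, checking compatibility with cycle classes and traces, and then invoking Poincar\'e duality to conclude it is an isomorphism. Your sketch of $(C_{\rm dR})$ is a genuinely different route: it is Scholze's pro-\'etale argument via the de Rham period sheaf $\so\Bdr$ (Poincar\'e lemma, primitive comparison theorem, computation of $\R\nu_*\so\Bdr$, i.e.\ relative Tate--Sen), and indeed the paper notes in Section~1.3 that Scholze ``found a way to circumvent Poincar\'e duality.'' Applied to the analytification of a smooth proper $X/K$ (with a GAGA identification of algebraic and rigid de Rham cohomology) this does give $(C_{\rm dR})$; what this approach buys is applicability to rigid-analytic varieties where Poincar\'e duality was then unavailable, whereas the cycle-class route keeps closer track of motivic functoriality and extends to the integral/torsion story.

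The reduction of $(C_{\rm st})$ to $(C_{\rm dR})$ does not go through in the form you state. Knowing that $V:=H^i_{\eet}(X_{\C_p},\Q_p)$ is de Rham with $D_{\rm dR}(V)\simeq H^i_{\rm dR}(X)$, and knowing that $H^i_{\rm HK}(X)$ is a weakly admissible filtered $(\varphi,N)$-module, does not produce a canonical isomorphism $D_{\rm st}(V)\simeq H^i_{\rm HK}(X)$: two de Rham (even semistable) representations can have the same $D_{\rm dR}$ without being isomorphic, and weak admissibility of $H^i_{\rm HK}(X)$ merely guarantees the existence of \emph{some} semistable representation with that $D_{\rm st}$, not that this representation is $V$. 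Your sentence ``$(C_{\rm st})$ amounts to the assertion that $H^i_{\hk}(X)$ is weakly admissible and canonically isomorphic to $D_{\rm st}(V)$'' is a restatement of $(C_{\rm st})$ via Colmez--Fontaine, not a reduction of it to $(C_{\rm dR})$. What is actually needed is to construct a period map at the $\bst$-level (equivalently at the $\acris$-level) and to check that it recovers the $(C_{\rm dR})$ map after the inclusion $\bst\hookrightarrow\bdr$; this is precisely the content of the syntomic or $\ainf$-cohomology constructions alluded to in the paper (Fontaine--Messing, Tsuji, Colmez--Nizio{\l}; Bhatt--Morrow--Scholze, \v{C}esnavi\v{c}ius--Koshikawa). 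The $p$-adic monodromy theorem can upgrade ``de Rham'' to ``potentially semistable,'' but it cannot by itself identify the resulting filtered $(\varphi,N,G_K)$-module with $H^i_{\rm HK}(X)$, so that step remains a missing ingredient in your argument.
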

In the conjecture,
$H^i_{\rm HK}(X)$ (the Hyodo-Kato cohomology) is an extra-structure on $H^i_{\rm dR}(X)$
coming from ``reduction modulo $p$'': it is a $\Q_p^{\rm nr}$-module equipped with a semi-linear $\varphi$,
a monodromy operator $N$ with $N\varphi=p\varphi N$, and a smooth action of $G_K$ commuting with $\varphi$
and $N$,
and we have a (Hyodo-Kato)-isomorphism 
$$H^i_{\rm dR}(X)\simeq (\Qbar_p\otimes_{\Q_p^{\rm nr}}H^i_{\rm HK}(X))^{G_K}$$

If $X$ has good reduction, then $N=0$, the inertia group acts trivially, and Hyodo-Kato cohomology
is crystalline cohomology of the reduction mod $p$ 
developed by Grothendieck and Berthelot~\cite{Gcris,B}; 
if $X$ has a semistable model, then inertia group still acts trivially and $H^i_{\rm HK}(X)$
is the log-crystalline cohomology of the reduction modulo~$p$ as defined by Hyodo and Kato~\cite{HK}. 
In general, $H^i_{\rm HK}(X)$ has been defined by Beilinson~\cite{BE2},  
using de Jong's alterations~\cite{dJ}  to reduce to the semistable case.

\begin{remark}\label{ht4}
(i) All the cohomology groups involved in Conjecture \ref{ht3} are finite dimensional over
their respective fields.

(ii) This conjecture is now a theorem (see \cite{Ts}, \cite{Fa94}, \cite{Ni08}, \cite{BE2}). All these
proofs construct the period map, show compatibility with cycle class maps and traces, and use Poincar\'e duality
to conclude that the period  map is an isomorphism.

(iii) By taking invariants under suitable structures, the period  
isomorphisms from Conjecture~\ref{ht3} give recipes to
recover $(H^i_{\rm HK}(X),H^i_{\rm dR}(X))$ from $H^i_{\eet}(X_{\C_p},\Q_p)$ and vice-versa:
one has isomorphisms (of filtered $K$-modules for the first one,
of $(\varphi,N,G_K)$-modules over $\Q_p^{\rm nr}$ for the second one) 
\begin{align*}
(\bdr\otimes_{\Q_p}H^i_{\eet}(X_{\C_p},\Q_p))^{G_K}&\simeq H^i_{\rm dR}(X),\\   
\varinjlim\nolimits_{[L:K]<\infty} (\bst\otimes H^i_{\eet}(X_{\C_p},\Q_p))^{G_L}&\simeq H^i_{\rm HK}(X),
\end{align*}
and an exact sequence
$$0\to H^i_{\eet}(X_{\C_p},\Q_p)\to (\bst\otimes_{\Q_p^{\rm nr}}H^i_{\rm HK}(X))^{N=0,\varphi=1}
\to (\bdr\otimes_KH^i_{\rm dR}(X))/{\rm Fil}^0\to 0$$

(iv) By taking ${\rm Fil}^0$-part of both terms in ${\rm C}_{\rm dR}$, and moding out by $t$,
we obtain the following
result which was the initial formulation of the Hodge-Tate conjecture by Tate~\cite{Tate}
 (renamed ${\rm C}_{\rm HT}$ by Fontaine), and which should be compared\footnote{There is
no twist in (\ref{ht1}) but, actually, if one wants cycle classes to match one needs
to introduce the same powers of $2\pi i$ as for the twists in $({\rm C}_{\rm HT})$: one can write
$\C_p(-r)=t^{-r}\C_p$, and $t$ is the $p$-adic avatar of $2\pi i$.} to~(\ref{ht1}): 
we have a functorial $G_K$-equivariant decomposition:
$${\rm(C_{HT})}\quad 
{\C_p}\otimes_{\Q_p}H^i_{\eet}(X_{\C_p},\Q_p)\simeq\oplus_{r+s=i}{\C_p}({-r})\otimes_K H^s(X,\Omega^r)$$
\end{remark}

\subsection{$p$-adic analytic varieties}
The theory of $p$-adic analytic varieties was also initiated by Tate~\cite{Rigid}: 
Tate's paper
was finally published in 1971 thanks to Serre's persistence (see~\cite{STcor}, Tate 07/10/1969),
 but the results were circulating as ``Rigid analytic spaces,
private notes of J. Tate, reproduced with(out) his permission by IHES'' starting from 1962 
(see~\cite{STcor},
Tate 16/10/1961, Tate 26/10/1961, Serre 10/11/1961, Tate 14/11/1961, Serre x/04/1962).

   In~\cite{Tate}, Tate already stated his Conjecture ${\rm C}_{\rm HT}$ for 
the $p$-adic \'etale cohomology of proper analytic varieties. In general, it is
easier to consider the pro-\'etale cohomology rather than the \'etale one, but
this gives the same result on quasi-compact varieties -- e.g., affinoids or proper varieties. 
To compute both  cohomologies one uses a hypercovering by affinoids; affinoids are $K(\pi,1)$-spaces
hence, locally, (pro-)\'etale cohomology reduces to continuous cohomology of the
fundamental group. The difference being that:

--- for $H^i_{\eet}(X_{\C_p},\Q_p)$, one uses $\Z_p$
as coefficients on each affinoid and inverts $p$ at the end, 

--- for $H^i_{\proeet}(X_{\C_p},\Q_p)$, 
one uses $\Q_p$ as coefficients on each affinoid. 

It is easier to work with $\Q_p$-coefficients since rational $p$-adic Hodge Theory 
is much easier and much more robust than the integral one.

\vskip2mm
The pro-\'etale $p$-adic cohomology of proper analytic varieties (which is also the \'etale
cohomology as we mentioned above) behaves in a very similar way to the $p$-adic \'etale cohomology
of algebraic varieties (note, however, that this is not the case if one considers nontrivial
coefficients): 

$\bullet$ Scholze~\cite{Sch0,Sch}
 proved that the $\Q_p$-vector spaces $H^i_{\proeet}(X_{\C_p},\Q_p)$ are finite dimensional.
(This is not the case anymore for nontrivial coefficients as was realized some years
later (see~\cite{hansen}):
the cohomology groups
are, in general, only finite Dimensional~\cite{ALBM,LNRZ}; 
i.e., they are Banach-Colmez spaces (BC's for short) see~\cite{CB,CF} and section~\ref{geo2.1}.)

$\bullet$ Scholze also proved ${\rm C}_{\rm dR}$, 
with the same formulation as in Conjecture~\ref{ht3},
but he found a way to circumvent Poincar\'e duality which was not available at that time.

$\bullet$ The authors  proved ${\rm C}_{\rm st}$ (see~\cite{CN1} in the case $X$ has a semi-stable model\footnote{See also \cite{BMS} and \cite{CK} for a different approach.},
and~\cite{CN5} for the general case), using crucially the finiteness result of Scholze
plus some fragments of the theory~\cite{CB,CF} of BC's (as well as the basic comparison theorem,
i.e., Theorem~\ref{basic1} below). 

$\bullet$ Mann~\cite{Mann} and Zavyalov~\cite{Zav} proved Poincar\'e duality.

\vskip2mm
In the rest of the text, we explore what happens for pro-\'etale
cohomology of partially proper analytic varieties (for example, analytifications
of nonproper algebraic varieties).  As the reader will see, it  behaves very differently
(the case of the open unit disk gives a good illustration of the new phenomena that appear,
see Section~\ref{sad1}).
We will mainly be concerned with the geometric situation, but see Sections~\ref{arit1} 
and~\ref{arit2} for results in the arithmetic case.

A big motivation for studying $p$-adic \'etale and pro-\'etale cohomologies of nonproper $p$-adic analytic varieties comes from the potential applications
to the geometrization of the hoped for $p$-adic local Langlands correspondence~\cite{CDN1,CDN3,CDN5},
but we will not elaborate on this in this survey, rather referring the reader to the
survey~\cite{icbs} devoted to this topic.  
One reason for that  is that the applications to the $p$-adic local Langlands correspondence use
mainly the $p$-adic \'etale cohomology rather than the pro-\'etale one, and the results
that we have about $p$-adic \'etale cohomology so far are much more fragmentary than
for the pro-\'etale one.

 \subsubsection*{Notation and conventions.}\label{Notation}
 Let $p$ be a prime and let $K$ be a complete discrete valuation field with a perfect residue field, of mixed characteristic. 
 Let $\so_K$ be the ring of integers in~$K$, and $k$ be its
residue field. 
Let $W(k)$ be the ring of Witt vectors of $k$ and let $F$ be its
fraction field (i.e., $W(k)=\so_F$).   

Let $\ovk$ be an algebraic closure of $K$ and let $\so_{\ovk}$ denote the integral closure of $\so_K$ in $\ovk$. Let $C=\wh{\ovk}$ be the completion of $\ovk$ for the $p$-adic valuation,
and let $G_K:={\rm Aut}_{\rm cont}(C/K)=  
\Gal(\overline {K}/K)$.

Let $\breve{C}={\rm Frac}(W(\overline{k}))\subset C$ 
and $C^{\rm nr}=\cup_{[k':k]<\infty}W(k')[\frac{1}{p}]\subset\ovk$
(hence $\breve{C}$ is the completion of~$C^{\rm nr}$),
 and let $\phi$ be the absolute
Frobenius on $\breve C$ and $C^{\rm nr}$.

    All rigid analytic spaces and dagger spaces considered will be over $K$ or $C$;  we assume that they are separated, taut, and countable at infinity. 
    
     We will use condensed mathematics as developed in  \cite{Sch19}, \cite{Sch20}; we will write
    $\sd(\Q_{p,\Box})$ for the $\infty$-derived category of solid $\Q_p$-vector spaces, etc.

 
 We will use the bracket notation for certain limits:
  $[C_1\stackrel{f}{\to} C_2]$ denotes the mapping fiber of $f$. 
  
\section{The basic comparison theorem}
We start our survey with a discussion of a basic observation: $p$-adic pro-\'etale cohomology of analytic varieties can be computed via their refined de Rham cohomology. 
\subsection{An example} We will start with a motivating example. 
\subsubsection{The complex unit disk}
Let $Y$ be the open unit disk over $\C$ (field of complex numbers) viewed as a complex
analytic variety of dimension~$1$.  The de Rham and singular (Betti) cohomology groups have the familiar form:
\begin{align*}
&H^i_{\rm dR}(Y)\simeq\begin{cases} \C &{\text{if $i=0,$}}\\ 0 & {\text{if $i\geq 1;$}}\end{cases}
&&H^i_{\rm B}(Y,\Q)\simeq\begin{cases} \Q &{\text{if $i=0,$}}\\ 0 & {\text{if $i\geq 1;$}}\end{cases}\\
&H^i_{{\rm dR},c}(Y)\simeq\begin{cases} \C &{\text{if $i=2,$}}\\ 0 & {\text{if $i\neq 2;$}}\end{cases}
&&H^i_{{\rm B},c}(Y,\Q)\simeq\begin{cases} \Q &{\text{if $i=2,$}}\\ 0 & {\text{if $i\neq 2.$}}\end{cases}
\end{align*}
Moreover, we have natural isomorphisms 
$$H^i_{\rm dR}(Y)\simeq H^i_{\rm B}(Y,\Q)\otimes_\Q\C ,
\quad
H^i_{{\rm dR},c}(Y)\simeq H^i_{{\rm B},c}(Y,\Q)\otimes_\Q\C,\quad{\text{for all $i\geq 0$}},$$
and the natural pairings
\begin{align*}
H^i_{\rm dR}(Y)\times H^{2-i}_{{\rm dR},c}(Y)&\to H^2_{{\rm dR},c}(Y)\simeq\C,\\
H^i_{\rm B}(Y,\Q)\times H^{2-i}_{{\rm B},c}(Y,\Q)&\to H^2_{{\rm B},c}(Y,\Q)\simeq\Q
\end{align*}
are perfect  (i.e., we have a Poincar\'e duality).

\subsubsection{The $p$-adic unit disk}\label{sad1}
Let now $Y_{\C_p}$ be the unit disk over $\C_p$ (completion of the algebraic closure of $\Q_p$)
viewed as a rigid
analytic variety of dimension~$1$.  Then we have the same results as above for de Rham cohomology
(with $\C$ replaced by $\C_p$).  The role of Betti cohomology in this setting  is played by
the $p$-adic (pro-)\'etale cohomology $H^*_{\proeet}(-,\Q_p)$ with $\Q_p$-coefficients. 
By \cite[Th. 6.14]{CN5}, \cite[Cor.\,1.6]{AGN},   we have
$$
H^i_{\proeet}(Y_{\C_p},\Q_p)\simeq 
\begin{cases} \Q_p &{\text{if $i=0$,}}\\ \O(Y_{\C_p})/\C_p &{\text{if $i=1$,}}\\
0 &{\text{if $i\geq 2$.}}\end{cases}
$$
$$
H^i_{{\proeet},c}(Y_{\C_p},\Q_p)\simeq 
\begin{cases} 0 &{\text{if $i\neq 2$,}}\\
[\O(\partial Y_{\C_p})/\O(Y_{\C_p})\hskip.5mm\rule[1mm]{3mm}{.6pt}\hskip.5mm\Q_p] &{\text{if $i=2$.}}
\end{cases}
$$
Here the bracket $[(-)-(-)]$ denotes an extension. The notation 
 $\partial Y_{\C_p}$ refers to the rigid analytic ghost circle, boundary of the open unit disk;
it is obtained by removing from $Y_{\C_p}$ all the affinoids $W$ that it contains:
we have $\O(\partial Y_{\C_p})=\varinjlim_W\O(Y_{\C_p}\moins W)$ and $\partial Y_{\C_p}$ behaves
like a proper variety of ``real'' dimension~$1$ (see th.\,\ref{main-arithmeticY0}).

\vskip2mm
  The main differences with the case of the complex unit disk are the following:

$\bullet$ $H^1_{\proeet}(Y_{\C_p},\Q_p)$ and $H^2_{{\proeet},c}(Y_{\C_p},\Q_p)$ are huge
groups and are not invariant by base change to a bigger complete algebraically closed extension $C$
of $\C_p$ (and there are many of those: there is no $p$-adic Gelfand-Mazur theorem), as the
``$\C_p$-components'' $\O(Y_{\C_p})/\C_p$ and $\O(\partial Y_{\C_p})/\O(Y_{\C_p})$ are
base changed to $C$.

$\bullet$ The groups $\O(Y_{\C_p})/\C_p\simeq H^0(Y_{\C_p},\Omega^1)$ (via $f\mapsto df$)
 and $\O(\partial Y_{\C_p})/\O(Y_{\C_p})\simeq H^1_c(Y_{\C_p},\O)$ are in
Serre duality over $\C_p$, but this cannot be turned into a duality over $\Q_p$
since $[\C_p:\Q_p]=\infty$. Hence the existence of a Poincar\'e duality
$$H^i_{\proeet}(Y_{\C_p},\Q_p)\times H^{2-i}_{{\proeet},c}(Y_{\C_p},\Q_p)\to\Q_p$$
looks impossible
(not to  mention the fact that the numerology does not cooperate either).

\begin{remark} Note that, by contrast, for $\ell\neq p$, the results are as expected:
$$
H^i_{\proeet}(Y_{\C_p},\Q_\ell)\simeq
\begin{cases} \Q_\ell &{\text{if $i=0$,}}\\ 0 &{\text{if $i\geq 1$;}}
\end{cases}
\quad
H^i_{{\proeet},c}(Y_{\C_p},\Q_\ell)\simeq
\begin{cases} 0 &{\text{if $i\neq 2$,}}\\
\Q_\ell  &{\text{if $i=2$}}
\end{cases}
$$
and we have a Poincar\'e duality
$$H^i_{\proeet}(Y_{\C_p},\Q_\ell)\times H^{2-i}_{{\proeet},c}(Y_{\C_p},\Q_\ell)\to\Q_\ell.$$
\end{remark}

 We will explain how to remedy the two difficulties mentioned above.  
For the fist one, we show that
$H^i_{{\proeet}}(Y_{\C_p},\Q_p)$ is actually the $\C_p$-points of a functor built out of
copies of $\Q_p$ and ${\bf G}_a$ (it is this ${\bf G}_a$ part that varies with $\C_p$).
For the second one,  the clue is to consider not ${\rm Hom}(-,\Q_p)$ as what one would naturally do,
but derived Hom, ${\rm RHom}_{\rm TVS}(-,\Q_p)$ in the category ${\rm TVS}$ in which the functor
attached to $H^i_{{\proeet}}(Y_{\C_p},\Q_p)$ naturally lives.

\subsection{Slogans} 
We turn now to  key slogans  in  Hodge Theory of $p$-adic analytic varieties.
In this section $Y$ is  a smooth rigid analytic variety over $K$. 
\subsubsection{Comparison theorems as  refined Lazard isomorphisms} \label{slonce1} (Formally) similarly to the algebraic case, pro-\'etale cohomology  can be computed by data coming from refined de Rham cohomology
\begin{equation}\label{kol1}
\R\Gamma_{\proeet}(Y_C,\Q_p) \longleftrightarrow \{\R\Gamma_{\dr}(Y) + {\rm Fil}^{\jcdot},\phi,N, \rho\}
\end{equation}
This data consist of the de Rham cohomology itself together with its Hodge filtration $\rm Fil^{\jcdot}$ but also the refined structures of Frobenius $\phi$, monodromy $N$, and a residual Galois action $\rho$ that satisfy compatibility relations.
 The structures $(\phi,N,\rho)$ live on Hydo-Kato cohomology $\rg_{\hk}(Y_C)$,
 which is a $C^{\nr}$-avatar of de Rham cohomology whose existence is due to the possibility
of reducing modulo~$p$ (models of) our analytic varieties, and whose cohomology groups,
 under favorable conditions (for example when $Y$ is partially proper),
 are subgroups of de Rham cohomology groups. 

\vskip1mm
   The (by now standard, see \cite{CN1}, \cite{SG}, \cite{CN4}, \cite{BMS}) passage between the two sides of \eqref{kol1} can be briefly described as follows:

$\bullet$ Locally on $Y$, by the $K(\pi,1)$-lemma, we can think of $\R\Gamma_{\proeet}(Y_C,\Q_p)$ as the continuous group cohomology $\R\Gamma(\pi_1(Y_C),\Q_p)$.
 
$\bullet$ Then, by the mixed characteristic Artin-Schreier, we can pass to group cohomology 
of one of the (perfectoid) period rings.

$\bullet$  Now, using almost purity, we can replace $\pi_1(Y_C)$ by a commutative\footnote{In the
arithmetic situation~\cite{CN1}, this group is no longer abelian, but is the semi-direct product
of an open subgroup of $\Z_p^\dual$ by $\Z_p^d$.} 
 $p$-adic Lie group $\Gamma$, which is the Galois group of the extension obtained by
extracting $p^\infty$-roots of coordinates of $Y$ (hence $\Gamma\simeq\Z_p^d$, where $d=\dim Y$),
and the period ring by the (perfectoid) period ring attached to this extension.

$\bullet$ Then, by decompletion, we can deperfect the period ring to bring 
it closer to the original variety $Y$, and make the action of $\Gamma$ locally analytic.
 Invoking theorems of Lazard~\cite{laz}
 we can now pass to Lie algebra cohomology
$\R\Gamma({\rm Lie}\,\Gamma,-)$, 
which is computed via a Koszul complex yielding the de Rham complex, but where the differential
$d$ is multiplied 
\footnote{It is  this multiplication by $t$ that is responsible 
for the huge $\C_p$-vector spaces that appear in $H^1_{\proeet}$ 
for the  unit disk in Section~\ref{sad1}.}
by $t$.

\vskip1mm
  The above steps can be reversed after stabilization, i.e.,  Tate-twisting the pro-\'etale cohomology by a high enough twist and reflecting this on the other side by twisting all the structures.

  \begin{remark}  (1) The above  passage  uses most  of the standard tools from  the toolbox of modern $p$-adic Hodge theorists.
 It is surprising  that most  of these tools were developed to study Hodge Theory of $p$-adic algebraic varieties but they work very well in the analytic setting as well.
 The authors were in particular surprised by the fact that the syntomic techniques developed by Fontaine-Messing, Hyodo, Kato, Tsuji, and others to compute $p$-adic nearby cycles turned out to be  robust enough to treat analytic varieties. 
  
  (2) There is one major exception here: the almost purity theorem that comes from the realm of "relative almost \'etale theory" and which Faltings was not able to prove.
 This theorem was proved by Kedlaya--Liu and Scholze only recently\footnote{During the Hot Topics workshop at MSRI on homological conjectures (in 2018) Faltings mentioned in his talk that he tried to prove the almost purity theorem by passing from characteristic zero to characteristic $p$; it did not occur to him that one could try to go the other way (the way it was proved).} and it unlocked the Hodge Theory of $p$-adic analytic varieties: in the absence of resolution of singularities  in mixed characteristic it is the second best thing.
\end{remark}

\subsubsection{Hyodo-Kato cohomology as an avatar of $\ell$-adic cohomology} The Hyodo-Kato cohomology $\R\Gamma_{\hk}(Y_C)$ mentioned above should be thought of as an avatar of $\ell$-adic \'etale cohomology.
 In the case it is of finite rank over $C^{\nr}$ this can be made precise: 
the Hyodo-Kato cohomology groups 
yield  Weil-Deligne representations  (see \cite{Ill})  and it is conjectured that these representations are isomorphic to the ones arising from $\ell$-adic cohomology $\R\Gamma_{\eet}(Y_C,\Q_{\ell})$, for $\ell\neq p$.
 This conjecture is known in some cases (notably in dimension $1$;
see \cite{Och} in the algebraic setting and~\cite[Th.\,8.2]{CDN2})  
and allows to bootstrap results from $\ell$-adic cohomology to $p$-adic cohomology (as in \cite{CDN1}).
\subsubsection{\'Etale versus pro-\'etale cohomology} There is a natural map
$$
\epsilon:\quad \R\Gamma_{\eet}(Y_C,\Q_p)\to \R\Gamma_{\proeet}(Y_C,\Q_p)
$$
from \'etale to pro-\'etale cohomology
but we do not understand it.
 It is not always injective as one could naively expect (see Remark \ref{hk4} for an example).
 Moreover, the \'etale cohomology itself is difficult to understand.
 This is because, unlike pro-\'etale cohomology which, via comparison theorems,
  uses mostly rational $p$-adic Hodge Theory,
 the \'etale one needs integral (or integral up to universal denominators) $p$-adic Hodge Theory,
 which is much more subtle.
  However, in some favorable cases the \'etale cohomology 
can be recovered from the pro-\'etale cohomology
(see \cite{CDN3} for the Drinfeld space and \cite{CDN1} 
for coverings of the Drinfeld space in dimension $1$).

\subsection{The basic comparison theorem}
We will discuss now in more detail the first slogan (see Section \ref{slonce1}).
\subsubsection{Statement of the theorem}
Let $Y_K$ be a smooth rigid analytic variety with an overconvergent (or dagger)
structure.
 Examples of such $Y_K$ are  proper or Stein varieties, the analytifications of  algebraic
varieties,  overconvergent affinoids, etc.  We assume that $Y_K$ is geometrically connected.

  The following theorem states the basic form of the comparison theorem between $p$-adic pro-\'etale cohomology and de Rham cohomology.
\begin{theorem} \label{basic1}
{\rm (Basic comparison theorem \cite{CN4})}
Let $n\geq r$ be positive integers.
 We have a long exact sequence in $\sd(\Q_{p,\Box})$
\begin{equation}\label{basic2}
\cdots\to X^{r,n-1}\to {\rm DR}^{r,n-1}\to H^n_{\proeet}(Y_C,\Q_p(r))\to
X^{r,n}\to {\rm DR}^{r,n}\to\cdots
\end{equation}
where
$$X^{r,i}:=(H^i_{\rm HK}(Y_C)\otimes^{\Box}_{\Q_p} \bst^+)^{N=0,\varphi=p^r},
\quad
{\rm DR}^{r,i}:=H^i(\rg_{\rm dR}(Y_K)\otimes^{\Box}_{K}\bdr^+/{\rm Fil}^r).
$$
\end{theorem}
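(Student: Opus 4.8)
The plan is to exhibit $\R\Gamma_{\proeet}(Y_C,\Q_p(r))$ as the cohomology of a \emph{syntomic complex} built out of the refined de Rham data, and then to read off \eqref{basic2} as the long exact sequence of a mapping fibre. Concretely, I would set
$$
\R\Gamma_{\syn}(Y_C,r):=\Big[\,[\R\Gamma_{\hk}(Y_C)\otimes^{\Box}_{\Q_p}\bst^+]^{N=0,\varphi=p^r}\ \lomapr{\iota_{\hk}}\ \R\Gamma_{\dr}(Y_K)\otimes^{\Box}_{K}\bdr^+/{\rm Fil}^r\,\Big]
$$
in $\sd(\Q_{p,\Box})$, the mapping fibre of the map obtained by composing the Hyodo--Kato isomorphism (identifying $\R\Gamma_{\hk}(Y_C)$ with $\R\Gamma_{\dr}(Y_K)$ after base change to $\bdr^+$) with the inclusion $\bst^+\hookrightarrow\bdr^+$ and the projection modulo ${\rm Fil}^r$. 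The defining triangle of this mapping fibre yields a long exact sequence which, once one identifies $H^n$ of the first entry with $X^{r,n}$ (the $H^n$ of the second entry being ${\rm DR}^{r,n}$ by definition) and $H^n\R\Gamma_{\syn}(Y_C,r)$ with $H^n_{\proeet}(Y_C,\Q_p(r))$, is exactly \eqref{basic2}. So everything reduces to two points: (i) the syntomic--pro-\'etale comparison $H^n\R\Gamma_{\syn}(Y_C,r)\simeq H^n_{\proeet}(Y_C,\Q_p(r))$, and (ii) the identification $H^n\big([\R\Gamma_{\hk}(Y_C)\otimes^{\Box}_{\Q_p}\bst^+]^{N=0,\varphi=p^r}\big)\simeq(H^n_{\hk}(Y_C)\otimes^{\Box}_{\Q_p}\bst^+)^{N=0,\varphi=p^r}=X^{r,n}$.

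Point (ii) is the easier one. The functor $D\mapsto[D\otimes^{\Box}_{\Q_p}\bst^+]^{N=0,\varphi=p^r}$ on $(\varphi,N)$-modules is computed by the Koszul complex in the two commuting operators $N$ and $\varphi-p^r$; since $N$ is surjective on $\bst^+$ with kernel $\bcris^+$, passing to $N=0$ is exact and reduces the computation to a complex $[\,\cdot\ \lomapr{\varphi-p^r}\ \cdot\,]$ with $\bcris^+$-coefficients, where $\varphi-p^r$ is surjective for $r\ge 1$ (a slope estimate, equivalently the fundamental exact sequences). Hence the functor is exact on the Fr\'echet $(\varphi,N)$-modules over $C^{\nr}$ produced by Hyodo--Kato cohomology, and, using topological flatness of $\bst^+$ over $\Q_p$ in $\sd(\Q_{p,\Box})$, it commutes with $H^n$; this is where $r\ge 1$ is used.

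Point (i) is the heart of the matter, and I would prove it exactly along the chain of reductions recalled in Section~\ref{slonce1}. Work on a hypercovering of $Y$ by small overconvergent affinoids admitting \'etale maps to tori. On each piece: by the $K(\pi,1)$-lemma, replace pro-\'etale cohomology with $\Q_p(r)$-coefficients by continuous $\pi_1$-cohomology of the relevant perfectoid period sheaves; by mixed-characteristic Artin--Schreier together with the fundamental exact sequences, rewrite $\Q_p(r)$ as a two-term complex built from $\bst^+$- and $\bdr^+/{\rm Fil}^r$-periods, i.e.\ as a local syntomic complex; by almost purity, replace $\pi_1$ by the Galois group $\Gamma\simeq\Z_p^d$ ($d=\dim Y$) of the tower of $p^\infty$-th roots of the coordinates and the perfectoid period ring by the one attached to that tower; by decompletion, deperfect this period ring to one close to $Y$ carrying a locally analytic $\Gamma$-action; and finally, by Lazard's theorem, pass to Lie-algebra cohomology $\R\Gamma({\rm Lie}\,\Gamma,-)$, computed by a Koszul complex reproducing the de Rham complex of $Y$ with the differential multiplied by $t$. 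Reassembling these local syntomic complexes along the hypercovering recovers $\R\Gamma_{\syn}(Y_C,r)$, the factor $t$ being absorbed by the twist $(r)$.

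I expect two main obstacles. The first is carrying out the \textbf{decompletion step inside the solid/condensed formalism}: deperfecting the perfectoid period rings while keeping the $\Gamma$-action locally analytic, and controlling all intermediate objects as solid $\Q_p$-modules with enough topological flatness and finiteness that the Koszul computation is valid and that $X^{r,n}$ and ${\rm DR}^{r,n}$ come out as honest cohomology groups rather than complexes. The second is the \textbf{stabilization and reversibility of the reductions}: each step above is only an isomorphism after twisting by a sufficiently high power of $t$ (the de Rham differential acquires a factor $t$), so one must verify that the hypothesis $n\ge r$ (together with $r\ge 1$) is precisely what makes the syntomic-to-pro-\'etale map an isomorphism in every degree occurring in \eqref{basic2} --- which amounts to tracking the Tate twists through the fundamental exact sequences and checking the vanishing of the error terms in the Koszul computation in the range $n\ge r$.
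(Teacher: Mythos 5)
Your overall strategy matches the paper's: exhibit $\R\Gamma_{\proeet}(Y_C,\Q_p(r))$ (truncated) as Bloch--Kato syntomic cohomology and read off \eqref{basic2} from the mapping-fibre triangle, proving the comparison via the $K(\pi,1)$-lemma, Artin--Schreier, almost purity, decompletion and Lazard. But the step you compress into ``reassembling these local syntomic complexes along the hypercovering recovers $\R\Gamma_{\syn}(Y_C,r)$'' is where the real content sits, and as stated there is a gap. The Lazard/Koszul reduction does not land on the Bloch--Kato complex $\big[[\R\Gamma_{\hk}(Y_C)\otimes^{\LL_{\Box}}_{C^{\nr}}\bst^+]^{N=0,\varphi=p^r}\to(\R\Gamma_{\dr}(Y_K)\otimes^{\LL_{\Box}}_K\bdr^+)/{\rm Fil}^r\big]$; it lands on the Fontaine--Messing complex \eqref{first}, built from the absolute crystalline cohomology $\R\Gamma_{\rm cris}({\cal Y})$ of an \'etale hypercovering of $Y_C$ by affinoids with semistable models (the local $t$-twisted de~Rham complexes assemble to a presentation of that). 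Turning this into the Bloch--Kato form is precisely Theorem~\ref{ht9}: one needs the K\"unneth quasi-isomorphism $\R\Gamma_{\rm cris}({\cal Y})/{\rm Fil}^r\simeq(\R\Gamma_{\dr}(Y_K)\otimes^{\LL_{\Box}}_K\bdr^+)/{\rm Fil}^r$ and, the hard part, Hyodo--Kato rigidity $\R\Gamma_{\rm cris}({\cal Y})\simeq[\R\Gamma_{\hk}(Y_C)\otimes^{\LL_{\Box}}_{C^{\nr}}\bst^+]^{N=0}$, which rests on constructing a functorial $(\varphi,N)$-compatible section of $\R\Gamma_{\rm cris}({\cal Y})\to\R\Gamma_{\hk}(Y_C)$. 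The paper flags this as arguably the most difficult fact in the Hyodo--Kato theory; your proposal takes a $\bdr^+$-level Hyodo--Kato morphism for granted, which hides exactly this.

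Your point (ii) is in the right direction --- surjectivity of $N$ on $\bst^+$ with kernel $\bcris^+$, and of $\varphi-p^r$ on $\bcris^+$ for $r\geq 1$, do drive the commutation of $H^n$ with $(-\otimes\bst^+)^{N=0,\varphi=p^r}$ --- but it is genuinely nontrivial in $\sd(\Q_{p,\Box})$: one must exploit the structure of $\R\Gamma_{\hk}(Y_C)$ as a pro-system of finite-rank $(\varphi,N)$-modules over $C^{\nr}$ to run the Koszul argument degreewise. The paper even records that the analogous commutation fails in the arithmetic situation of Theorem~\ref{basicA} (``$X^{r,i}$ is not equal to $H^i_{\hk}(Y)^{N=0,\phi=p^r}$''), precisely because there is no $\bst^+$-tensoring to produce the needed surjectivity.
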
 
Here, the Hyodo-Kato cohomology $H^i_{\rm HK}(Y_C)$, just as in the algebraic setting,  is an additional structure on de Rham cohomology that comes
from the ``reduction modulo $p$''.  More precisely,
$H^i_{\rm HK}(Y_C)$ is a $C^{\rm nr}$-module, endowed with actions of $G_K$, $\varphi$ and $N$,
such that the action of $G_K$ commutes with $\varphi$ and $N$, and $N\varphi=p\varphi N$.  Moreover   we have
a ``Hyodo-Kato isomorphism'' $$\iota_{\hk}:H^i_{\rm HK}(Y_C)\otimes^{\Box}_{C^{\rm nr}}C\stackrel{\sim}{\to} H^i_{\rm dR}(Y_C).$$
The  construction of this Hyodo-Kato cohomology  is modeled on the one of Beilinson for algebraic varieties (see \cite{BE2}): it is the classical  Hyodo-Kato cohomology  if $Y_K$
has a semistable model and uses the \'etale local alterations of Hartl and Temkin (see \cite{Urs}, \cite{Tem}) to reduce to that case. 
Topologically, $H^i_{\rm HK}(Y_C)$ is a projective limit of finite dimensional
$C^{\rm nr}$-modules on which the action of $G_K$ is smooth (so the action on
$H^i_{\rm HK}(Y_C)$ itself is ``pro-smooth''). This allows to control the topology on many other $p$-cohomologies.

\begin{remark}{\rm (Other definitions of Hyodo-Kato cohomology.)} The Beilinson-style Hyodo-Kato cohomology described above was defined in \cite{CN4}.  Since then two alternate constructions have appeared: 
\begin{enumerate}[leftmargin=*] 
 \item  The first one is  due to  Binda-Gallauer-Vezzani (see \cite{BGV}, \cite{BKV}, and \cite{BV} for a survey).     It lives in the world  of rigid analytic motives
of Ayoub~\cite{ayoub}.  Its  definition also uses the alterations of Hartl and Temkin to reduce to "simpler" rigid analytic varieties. But it has two major advantages over the definition \`a la Beilinson: 

(i) Via the theory of rigid analytic motives it blackboxes the reduction steps via alterations.

(ii) It allows to define 
 Hyodo-Kato cohomology for, for example, perfectoid spaces.

\item Very recently, Ansch\"utz-Bosco-Le Bras-Rodriguez Camargo-Scholze \cite{ABLBRCS} constructed Hyodo-Kato cohomology via the theory of Gelfand  stacks. 
This definition also allows to work in a more general setting than rigid analytic varieties. 
 \end{enumerate}
\end{remark}

 Theorem \ref{basic1} expresses   $p$-adic pro-\'etale cohomology in terms of differential forms but in a derived fashion,
 i.e., it allows to recover the derived $p$-adic pro-\'etale cohomology from the refined derived de Rham cohomology. An important fact to notice here is that the period rings appear in their $+$-form,
 i.e., we do not invert $t$. This accounts for  the $n\geq r$ condition. 
A condition that seems restrictive but it will allow us to recover the pro-\'etale cohomology groups ({\bf sic !})  from the refined de Rham cohomology groups 
(conjecturally in general, but inconditionaly in many cases of interest;
see the examples in Section \ref{nowy1}  and the Conjecture ${\rm C}_{\st}$ in Section \ref{nowy2}).

\begin{remark}{\rm (Analytic varieties over $C$.)}\label{ht8}
If we do not assume that $Y_C$ comes from a variety over~$K$, we have a similar result
but in the definition of ${\rm DR}^{r,n}$, one has to replace $\rg_{\rm dR}(Y_K)\otimes^{\LL_{\Box}}_K\bdr^+$
with the $\bdr^+$-cohomology.
\end{remark}

\subsubsection{Examples} \label{nowy1} We have the following consequences of Theorem \ref{basic1}:

 (i) If $Y_K$ is proper, the sequence \eqref{basic2} splits into short exact sequences
$$0\to  H^n_{\proeet}(Y_C,\Q_p(r))\to
X^{r,n}\to {\rm DR}^{r,n}\to 0,$$
which is reminiscent of the fundamental exact sequence
\begin{equation}\label{fund1}
0\to \Q_p(r)\to \bst^{+,N=0,\varphi=p^r}\to\bdr^+/{\rm Fil}^r\to 0.
\end{equation}

 Moreover $H^n_{\proeet}(Y_C,\Q_p(r))$ is finite dimensional over $\Q_p$ and
$H^n_{\rm HK}(Y_C)$ is finite dimensional over $C^{\rm nr}$.
Hence this case is completely analogous to what happens
for $p$-adic \'etale cohomology of algebraic varieties.
The finite dimensionality of $H^n_{\proeet}(Y_C,\Q_p(r))$ was proved by Scholze in \cite{Sch};
the splitting of the exact sequence uses this finite dimensionality, 
the fact that  the maps in the exact sequence \eqref{basic2} are
induced by  morphisms of Banach-Colmez spaces, plus basic properties of BC's.

(ii) If $Y_K$ is the affine space ${\mathbb A}^d_K$ of dimension $d$, we have
$H^n_{\rm HK}(Y_K)=0$,  for $n\geq 1$, from which we deduce an isomorphism
$$H^n_{\proeet}(Y_C,\Q_p(r))\simeq (\Omega^{n-1}(Y_C)/{\rm Ker}\,d)(r-n),
\quad \text{for } 1\leq n\leq r.$$
By contrast, one can show that $H^n_{\eet}(Y_C,\Q_p(r))=0$, for  $n\geq 1$.

(iii) More generally, if $Y_K$ is Stein (i.e., a strictly increasing union of
affinoids, which implies that  $H^i(Y_C,{\cal F})=0$, for $i\geq 1$ and ${\cal F}$ a coherent
sheaf on $Y_C$), we have a short exact sequence
$$0\to \big(\Omega^{n-1}(Y_C)/{\rm Ker}\,d\big)(r-n)\to H^n_{\proeet}(Y_C,\Q_p(r))\to X^{r,n}\to 0$$

\begin{remark}{\rm (Not spherically complete puzzle.)}\label{hk4}
It follows from example (iii) above that, if $Y$ is the open unit ball in dimension~$1$, then
$H^2_{\proeet}(Y_C,\Q_p)=0$.  On the other hand $H^2_{\eet}(Y_C,\Q_p)=0$ if and only if
$C$ is spherically complete~\cite[Th.\,A.6]{CDN2}.
This shows that \'etale cohomology does not always inject
into pro-\'etale cohomology.

   The above dichotomy between spherically complete and not spherically complete fields is a bit
unsettling. But, at least in dimension $1$, it
 can be fixed by using the {\it adoc generic fiber} instead
of the rigid one (see~\cite[Rem.\,A.9]{CDN2}):
if we view $Y$ as a rigid analytic space, we need to cover
$Y$ by an increasing union of closed balls, and we miss the ghost circle
at the boundary. 

Another possibility would be to change slightly the definition of \'etale cohomology
to impose that $Y$ is a $K(\pi,1)$-space (more generally, that affine adoc spaces are $K(\pi,1)$):
indeed one can show that $H^i(\pi_1(Y_C),\Q_p)$
coincides with $H^i_{\eet}(Y_C,\Q_p)$ for $i\leq 1$, and is $0$
if $i\geq 2$ (without imposing $C$ to be spherically complete, see~\cite[Prop.\,A.8]{CDN2}).
\end{remark}
\subsubsection{Comparison with syntomic cohomology}
The long exact sequence \eqref{basic2}   is obtained  from the natural period quasi-isomorphism in $\sd(\Q_{p,\Box})$
\begin{equation}\label{BK1}
\alpha_{\rm BK}:\quad \tau_{\leq r}\rg_{\proeet}(Y_C,\Q_p(r))\simeq \tau_{\leq r}\rg^{\rm BK}_{\synt}(Y_C,\Q_p(r)),
\end{equation}
where 
$$
\rg^{\rm BK}_{\synt}(Y_C,\Q_p(r)):=\big[[\rg_{\rm HK}(Y_C)\otimes^{\LL_{\Box}}_{C^{\rm nr}}\bst^+]^{N=0,\varphi=p^r}
\stackrel{\iota_{\hk}}{\longrightarrow} (\rg_{\rm dR}(Y_K)\otimes^{\LL_{\Box}}_K\bdr^+)/{\rm Fil}^r\big]
$$
is the Bloch-Kato syntomic cohomology. 
The term $(\rg_{\rm dR}(Y_K)\otimes^{\LL_{\Box}}_K\bdr^+)/{\rm Fil}^r$  gives rise to the ${\rm DR}^{r,i}$ term in the  exact sequence \eqref{basic2})
and the term $[\rg_{\rm HK}(Y_C)\otimes^{\LL_{\Box}}_{C^{\rm nr}}\bst^+]^{N=0,\varphi=p^r}$ gives rise to 
 the $X^{r,i}$ terms in the same sequence.

The quasi-isomorphism  \eqref{BK1} follows from  the following two results.
\begin{theorem} {\rm (Pro-\'etale vs syntomic comparison)} \label{kol11}Let $r\geq 0$. The Fontaine-Messing 
period  morphism:
$$
\alpha_{\rm FM}:\quad  \rg^{\rm FM}_{\rm syn}(Y_C,\Q_p(r))\to \rg_{\proeet}(Y_C,\Q_p(r))
$$
is a quasi-isomorphism after truncation $\tau_{\leq r}$.
\end{theorem}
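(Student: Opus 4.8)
The plan is to exploit that the statement is local on $Y_C$: both $\rg^{\rm FM}_{\rm syn}(-,\Q_p(r))$ and $\rg_{\proeet}(-,\Q_p(r))$ are hypercohomology of complexes of sheaves on the (pro-)\'etale site, and $\alpha_{\rm FM}$ is induced by a morphism of such complexes --- the Fontaine--Messing period morphism, assembled from the crystalline-to-de Rham comparison together with the divided Frobenius. So I would first reduce, via a hypercovering argument, to checking that $\alpha_{\rm FM}$ is a $\tau_{\le r}$-quasi-isomorphism on ``small'' dagger affinoids $U_K$ that are \'etale over a torus (or a polydisk); and, in order to handle an arbitrary log-smooth $Y_K$, combine this with descent along the \'etale-local alterations of Hartl--Temkin that reduce to the semistable case --- the same reduction used to construct $\rg_{\hk}$. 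In effect this imports, and globalizes, the classical local computation of $p$-adic nearby cycles via syntomic sheaves.

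On such a local piece one computes both sides explicitly and matches them at the ``coefficient'' level. For the pro-\'etale side, the $K(\pi,1)$-lemma (affinoids are $K(\pi,1)$-spaces) presents $\rg_{\proeet}(U_C,\Q_p(r))$ as continuous group cohomology $\R\Gamma(\pi_1(U_C),\Q_p(r))$; almost purity replaces $\pi_1(U_C)$ by the abelian $p$-adic Lie group $\Gamma\simeq\Z_p^d$ acting on a perfectoid period ring, and decompletion deperfects this ring and makes the $\Gamma$-action locally analytic, producing the Koszul complex of the Lazard picture of Section~\ref{slonce1}. For the syntomic side, the analytic, filtered, Frobenius-equivariant crystalline Poincar\'e lemma for the period sheaf ${\mathbb A}_{\rm cris}$ identifies $\rg^{\rm FM}_{\rm syn}(U_C,\Q_p(r))$ with the explicit mapping-fibre complex
$$[\,{\rm Fil}^r(\bdr^+\wotimes_K\rg_{\rm dR}(U_K))\stackrel{\varphi-p^r}{\longrightarrow}\bcris^+\wotimes\rg_{\rm dR}(U_K)\,],$$
built from the de Rham complex of (a semistable lift of) $U$ together with its Hodge filtration and Frobenius.

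The crux is then a purely coefficient-level comparison of this syntomic complex with the pro-\'etale Koszul complex, governed by the fundamental exact sequence of $p$-adic Hodge theory~\eqref{fund1}
$$0\to\Q_p(r)\to\bst^{+,N=0,\varphi=p^r}\to\bdr^+/{\rm Fil}^r\to 0$$
together with its relative, sheafified version on $Y_{C,\proeet}$: tensoring the de Rham complex with this sequence and passing to continuous cohomology of $\Gamma$ should reproduce, in degrees $\le r$, the pro-\'etale complex. This is exactly where the truncation $\tau_{\le r}$ is forced --- the Fontaine--Messing divided Frobenius $\varphi_r=p^{-r}\varphi$ is integrally defined only on the $r$-th step of the Hodge filtration, so the syntomic complex recovers $\rg_{\proeet}(Y_C,\Q_p(r))$ only up to cohomological degree $r$; beyond that the two genuinely differ (as one already sees on Stein spaces, e.g.\ the open disk of Section~\ref{sad1}).

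The main obstacle, I expect, is the infrastructure rather than the final diagram chase: (i) establishing the crystalline and de Rham Poincar\'e lemmas for the period sheaves in the non-proper, dagger-analytic setting while tracking \emph{all} the structures ($\varphi$, $N$, the Hodge filtration, the residual $G_K$-action) simultaneously; (ii) the descent through Hartl--Temkin alterations needed to pass from the local semistable model to a general $Y_K$ and to glue the local quasi-isomorphisms coherently; and (iii) carrying out decompletion and all the identifications at the level of \emph{solid} $\Q_p$-complexes in $\sd(\Q_{p,\Box})$, so as to obtain genuine quasi-isomorphisms in the derived category of solid vector spaces rather than abstract isomorphisms of cohomology groups. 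These are precisely the points for which the syntomic machinery of Fontaine--Messing, Hyodo, Kato and Tsuji --- made robust by Scholze's perfectoid methods and the almost purity theorem --- is indispensable.
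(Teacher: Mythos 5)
Your outline hits several of the paper's guideposts (localize via $K(\pi,1)$ plus almost purity, decomplete to make the $\Gamma$-action locally analytic, land on the Lazard/Koszul presentation, and expect the truncation $\tau_{\le r}$ to be forced by Frobenius), but it diverges from the actual argument at the two steps that carry the real weight, and it also blurs two distinct theorems in the paper.

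First, the mechanism. You propose to run the coefficient-level comparison directly against the crystalline fundamental exact sequence, sheafified on $Y_{C,\proeet}$, and to appeal to a ``crystalline Poincar\'e lemma for $\mathbb{A}_{\rm cris}$'' to turn the syntomic side into the mapping fiber of $\varphi - p^r$ on $\bcris^+\wotimes\rg_{\dr}$. The paper deliberately does \emph{not} do this: the crux of the proof in \cite{CN4}, as the remark following the theorem stresses, is to swap the crystalline sequence $0\to\Q_p(r)\to F^r\mathbb{B}^+_{\crr}\to\mathbb{B}^+_{\crr}\to 0$ for the much more tractable sequence $0\to\Q_p(r)\to\mathbb{B}_I(r)\xrightarrow{1-\phi}\mathbb{B}_{I'}(r)\to 0$ of pro-\'etale sheaves, where $\mathbb{B}_I,\mathbb{B}_{I'}$ are relative coordinate rings of standard affinoids on the Fargues--Fontaine curve, chosen so that $t$ has exactly one zero on ${\rm Spa}(\B_I)$ and is a unit in $\B_{I'}$. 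The key local computation is then the pro-\'etale cohomology of $\mathbb{B}_I$ (Theorem~\ref{CGN21}), not of $\mathbb{A}_{\rm cris}$; this replacement is precisely what avoids the delicate divided-power/divided-Frobenius accounting you flagged as the ``crux'' and makes the $(\varphi,\Gamma)$-module computations from \cite{CN1,SG} go through. Your proposal leaves a genuine gap here: it is not clear that the direct comparison via \eqref{fund1} you sketch can be made to close without the $\mathbb{B}_I$-detour.

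Second, you conflate two separate results. The mapping-fiber complex $[{\rm Fil}^r(\bdr^+\wotimes\rg_{\dr})\to\bcris^+\wotimes\rg_{\dr}]$ you write down is (up to the $N$-twist) the Bloch--Kato syntomic complex $\rg^{\rm BK}_{\synt}$, and identifying $\rg^{\rm FM}_{\rm syn}$ with it is the content of Theorem~\ref{ht9} (K\"unneth for derived de Rham cohomology plus Hyodo--Kato rigidity), \emph{not} of Theorem~\ref{kol11}. Theorem~\ref{kol11} is a statement purely about the Fontaine--Messing syntomic cohomology built from $\rg_{\rm cris}(\mathcal{Y})$ of a semistable hypercovering, and its proof does not pass through the $\bst^+$-decorated Bloch--Kato presentation. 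Relatedly, your globalization step uses descent along Hartl--Temkin alterations; in the paper those alterations serve to \emph{define} $\rg^{\rm FM}_{\rm syn}$ and $\rg_{\hk}$, whereas the globalization of the local $(\varphi,\Gamma)$-computations is carried out by the ``all-coordinates'' method of Bhatt--Morrow--Scholze, which avoids choosing a framing and is what makes the local quasi-isomorphisms glue. These are the two places where your argument would need to be reworked to match a proof that actually closes.
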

Here, the syntomic cohomology (ala Fontaine-Messing)  is defined as the mapping fiber
\begin{equation}\label{first}
\rg^{\rm FM}_{\rm syn}(Y_C,\Q_p(r)):=\big[[\rg_{\rm cris}({\cal Y})]^{\varphi=p^r}\stackrel{\can}{\longrightarrow}\rg_{\rm cris}({\cal Y})/{\rm Fil}^r
\big]
\end{equation}
where ${\cal Y}$ is an \'etale hypercovering of $Y_C$ by affinoids with
a semistable model\footnote{Sensu stricto, to make this definition independent of choices  one has to take  a colimit over all hypercoverings of such from.} (such hypercoverings can be constructed using the alterations
of Hartl and Temkin) and $\rg_{\rm cris}({\cal Y})$ is the absolute crystalline cohomology of the associated log-formal-schemes.
 The period map $\alpha_{\rm FM}$ is the one originally defined by Fontaine-Messing in \cite{FM}. It involves a version of Poincar\'e Lemma for the appearing period sheaves (a priori in the syntomic-\'etale  topology). 
 
 The proof of Theorem  \ref{kol11} in \cite{CN4} relies on local computations from  \cite{CN1}, \cite{SG}, which use $(\phi,\Gamma)$-modules to express the map $\alpha_{\rm FM}$ as a serious of quasi-isomorphisms. 
These modification is then globalized using the approach of all-coordinates borrowed from Bhatt-Morrow-Scholze \cite{BMS}. 
\begin{remark} (Two types of syntomic cohomology.) The two types of syntomic cohomology mentioned above (they are quasi-isomorphic) play a different role in $p$-adic Hodge Theory.
 The Bloch-Kato syntomic cohomology 
(called that way because generalized Bloch-Kato Selmer groups from \cite{BK}) is useful for computations: one first tries to get a handle on de Rham cohomology, from that, via the Hyodo-Kato isomorphism on Hyodo-Kato cohomology; then all that remains is to understand the filtration, Frobenius, and monodromy (difficult in general).
 The Fontaine-Messing syntomic cohomology, on the other hand, was traditionally used to prove comparison theorems (see \cite{Ts}).
\end{remark}
 Roughly speaking, the Fontaine-Messing  period morphism $\alpha_{\rm FM}$ in Theorem \ref{kol11} is induced by the fundamental exact sequence 
$$
0\to  \Q_p(r)\to F^r{\mathbb B}^+_{\crr}\stackrel{p^r-\phi}{\longrightarrow }{\mathbb B}^+_{\crr}\to 0
$$
of sheaves on the syntomic site.  
To prove that it is a quasi-isomorphism in a stable range one replaces this sequence  with the exact sequence
$$
0\to \Q_p(r)\to {\mathbb B}_{I}(r)\stackrel{1-\phi}{\longrightarrow} {\mathbb B}_{I^{\prime}}(r)\to 0
$$
of pro-\'etale sheaves and proceeds as sketched in Section \ref{slonce1}.
  Here $I,I^{\prime}\subset (0,\infty)$ are compact intervals with rational endpoints   (conveniently chosen) and ${\mathbb B}_{I}, {\mathbb B}_{I^{\prime}}$ are relative versions of the period rings $\B_{I}, \B_{I^{\prime}}$ ---  coordinate rings of standard affinoids on the $Y_{\rm FF}$-curve (the $Y$-curve of Fargues-Fontaine).
In particular, the choices of
the intervals $I, I^{\prime}$ imply that $t$ has one zero on ${\rm Spa}(\B_{I} )$ and is a unit in $\B_{I^{\prime}}$.

\begin{theorem}\label{ht9}
\begin{enumerate}[leftmargin=*]
\item {\rm (K\"unneth formula for derived de Rham cohomology)} We have a natural quasi-isomorphism:
$$
\rg_{\rm cris}({\cal Y})/{\rm Fil}^r\simeq (\rg_{\rm dR}(Y_K)\otimes^{\LL_{\Box}}_K\bdr^+)/{\rm Fil}^r
$$
\item {\rm (Hyodo-Kato rigidity)} We have  a natural Frobenius equivariant quasi-isomorphism:
\begin{equation}\label{second}\rg_{\rm cris}({\cal Y})
\simeq[\rg_{\rm HK}(Y_C)\otimes^{\LL_{\Box}}_{C^{\rm nr}}\bst^+]^{N=0}
\end{equation}
\end{enumerate}
\end{theorem}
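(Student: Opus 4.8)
The plan is to prove the two statements in parallel, using that the absolute log-crystalline cohomology $\rg_{\rm cris}(\sy)$ is, by construction, a complex of solid $\acris$-modules carrying a Frobenius $\varphi$ and a divided-power (Hodge) filtration, computed hypercovering-term by hypercovering-term on a semistable model where enough liftings are available; both assertions will then follow by simplicial descent along $\sy$ from local comparison results.

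For (1), the first step is the crystalline-to-$\bdr^+$ comparison: after inverting $p$ and base-changing along $\acris[1/p]\to\bdr^+$, the complex $\rg_{\rm cris}(\sy)$ becomes the $\bdr^+$-cohomology $\rg_{\rm dR}(Y_C/\bdr^+)$ of $Y_C$, compatibly with filtrations. On a semistable model this is the log-crystalline Poincar\'e lemma (present $\rg_{\rm cris}$ as the log-de Rham complex of an $\acris$-lifting, the divided-power filtration going over to the convolution of the Hodge filtration with the $\ker\bar\theta$-adic filtration), globalized by the all-coordinates method of \cite{BMS}. The second step is the base-change (K\"unneth) isomorphism $\rg_{\rm dR}(Y_C/\bdr^+)\simeq\rg_{\rm dR}(Y_K)\otimes^{\LL_{\Box}}_K\bdr^+$, which holds because $Y_C$ descends to $Y_K$ over $K\subset\bdr^+$ and the $\bdr^+$-cohomology — a deformation of de Rham cohomology, equivalently the Hodge-completed derived de Rham cohomology relative to $\bdr^+$ — commutes with base change along $K\to\bdr^+$; the solid $\otimes^{\LL_{\Box}}$ appears because for the non-smooth overconvergent $Y_K$ one uses derived de Rham cohomology, whose left Kan extension definition makes this base change automatic. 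Passing on both sides to the quotient by ${\rm Fil}^r$ — the convolution of the Hodge filtration with the $\ker\theta$-adic filtration on $\bdr^+$ — then gives the statement; gluing along the hypercovering is descent, and the solid topology is controlled throughout by the pro-smoothness of $\rg_{\hk}$.

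For (2), I would factor the comparison through the log-crystalline cohomology $\rg^{\bst^+}_{\rm cris}(\sy)$ relative to the ``universal'' log-PD-base $\bst^+=\bcris^+[\log\tilde p]$, a complex of $(\varphi,N)$-modules over $\bst^+$ with $\varphi(\log\tilde p)=p\log\tilde p$ and $N=-d/d(\log\tilde p)$. First, $\rg_{\rm cris}(\sy)\simeq[\rg^{\bst^+}_{\rm cris}(\sy)]^{N=0}$: the $\bst^+$-base is the $\acris$-base thickened by a single divided-power variable $\log\tilde p$ along which the connection acts by $N$, so by the crystalline Poincar\'e lemma the $\acris$-cohomology is the mapping fiber of $N$ on the $\bst^+$-cohomology — which is by definition the right-hand side. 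Second, $\rg^{\bst^+}_{\rm cris}(\sy)\simeq\rg_{\hk}(Y_C)\otimes^{\LL_{\Box}}_{C^{\rm nr}}\bst^+$: this is Hyodo-Kato base change, obtained — after reducing via the Hartl-Temkin alterations \cite{Urs,Tem} (as in the construction of $\rg_{\hk}$ and of $\sy$) to a semistable model — by comparing the hollow log-base over $C^{\rm nr}$ underlying $\rg_{\hk}(Y_C)$ with the $\bst^+$-base. Combining the two and keeping track of Frobenius — equivariance is forced by $\varphi(\log\tilde p)=p\log\tilde p$ together with $N\varphi=p\varphi N$ — yields \eqref{second}; that $\bst^+$ occurs in its $+$-form (with $t$ not inverted) is, as in (1), what confines the argument to the stable range.

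The hardest input, in both parts, is of the same nature: promoting these comparisons to \emph{canonical}, choice-independent quasi-isomorphisms so that they descend along the hypercovering. In (1) this is the independence of the $\bdr^+$-cohomology from the chosen $\bdr^+$-lifting, in the derived and overconvergent generality and compatibly with the Hodge filtration; in (2) it is the absolute Hyodo-Kato rigidity — a canonical $\varphi$-equivariant quasi-isomorphism between log-crystalline cohomologies computed over two different PD-bases — which in the algebraic case is Beilinson's theorem \cite{BE2} (building on \cite{HK}) and here must be redone analytically through the Hartl-Temkin alterations, carrying along all of $\varphi$, $N$, ${\rm Fil}$, and the solid topology.
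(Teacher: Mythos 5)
For part (1), you take a different route from the paper: you pass through the intermediary of $\bdr^+$-cohomology — crystalline $\to$ $\bdr^+$-cohomology by base change along $\acris[1/p]\to\bdr^+$, followed by the base-change isomorphism $\rg_{\rm dR}(Y_C/\bdr^+)\simeq\rg_{\rm dR}(Y_K)\otimes^{\LL_{\Box}}_K\bdr^+$. The paper instead reinterprets the Hodge-completed absolute crystalline cohomology directly as Hodge-completed derived de Rham cohomology, invokes the genuine K\"unneth formula for the latter (splitting off a derived de Rham cohomology of $\so_C$ over the base), and then identifies $\bdr^+/{\rm Fil}^r\simeq\rg_{\dr}(\so_{C})/{\rm Fil}^r$. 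Both routes are viable; the paper's has the conceptual advantage that the tensor factor $\bdr^+$ \emph{emerges} as a derived de Rham cohomology rather than being introduced by hand as a base ring, and it avoids a detour through a separate $\bdr^+$-cohomology theory (which the paper reserves for the case when $Y_C$ does not descend to $K$, cf.\ Remark \ref{ht8}). Your detour is harmless but requires one extra comparison (crystalline vs.\ $\bdr^+$-cohomology) that the paper's version does not.

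For part (2), your factorization $\rg_{\rm cris}(\sy)\simeq[\rg^{\bst^+}_{\rm cris}(\sy)]^{N=0}$ followed by $\rg^{\bst^+}_{\rm cris}(\sy)\simeq\rg_{\hk}(Y_C)\otimes^{\LL_{\Box}}_{C^{\rm nr}}\bst^+$ is a sensible way to organize the statement, and the first step is indeed a formal Poincar\'e-lemma observation about the extra divided-power variable $\log\tilde p$. But the second step is exactly where the entire difficulty of the theorem sits, and you only \emph{name} it (``comparing the hollow log-base over $C^{\rm nr}$ with the $\bst^+$-base''). The paper identifies the missing mechanism: the existence of a highly functorial, $(\varphi,N)$-compatible section to the projection $\rg_{\rm cris}(\sy)\to\rg_{\hk}(Y_C)$, whose construction rests on a Dwork-lemma-type argument — $\varphi$ is an automorphism on $\rg_{\hk}$ but topologically nilpotent on the kernel of the projection — with Beilinson's finite-rank argument as the model. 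Without that idea, what you have written is a restatement of the theorem with an intermediate base inserted, not a proof sketch. That is the genuine gap.
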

The first claim follows from the reinterpretation of (Hodge completed) crystalline cohomology as (Hodge completed) derived de Rham cohomology, from the K\"unneth formula for the latter, and from the fact that $\B^+_{\dr}/{\rm Fil}^r\simeq \rg_{\dr}(\so_{\C_p})/{\rm Fil}^r$.
 The second claim is argueably the most difficult fact in the theory of Hyodo-Kato cohomology.
 It relies on the existence of a section to the canonical projection $\rg_{\rm cris}({\cal Y})\to \rg_{\rm HK}(Y_C)$; a section that should be compatible both with Frobenius and monodromy and also highly functorial.
 Morally speaking, this follows from a version of Dwork Lemma using the fact that Frobenius is an automorphism on the Hyodo-Kato cohomology but is highly nilpotent on the kernel of the projection.
 In the case Hyodo-Kato cohomology is of finite rank Beilinson in \cite{BE2} has written 
down an elegant abstract argument
why this works.
 In general, one has to upgrade the original construction of Hyodo-Kato, which is highly nonconstructive.

\subsubsection{Pro-\'etale cohomology of period sheaves} The key computation in the proof of Theorem \ref{kol11} is that of pro-\'etale cohomology of the period sheaves ${\mathbb B}_I$. 
\begin{theorem}  {\rm (Cohomology of ${\mathbb B}_I$)} \label{CGN21}Let $I\subset (0,\infty)$ be a compact interval with rational endpoints. 
\begin{enumerate}[leftmargin=*]
\item There is a natural\footnote{Frobenius  sends $I$ to $I/p$, hence
the quotation marks.}
 ``Frobenius equivariant'' quasi-isomorphism in $\sd(\B_{I,\Box})$
\begin{equation}\label{CGN2}
\tau_{\leq r}\rg_{\proeet}(Y_C,{\mathbb B}_I)  \simeq \tau_{\leq r}[[\rg_{\hk}(Y_C)\{r\}\otimes^{\LL_{\Box}}_{C^{\nr}}\B_{I,\log}]^{N=0}\to  \oplus_{Z(I)}(\rg_{\dr}(Y_K)\otimes^{\LL_{\Box}}_{K}\bdr^+)/{\rm Fil}^r](-r),
\end{equation}
where the index set $Z(I)$ is over the zeros of $t$ in ${\rm Spa}(\B_I)$. 
\item If  $t$ is a unit in $\B_I$ then  there is a natural ``Frobenius equivariant''
 quasi-isomorphism in $\sd(\B_{I,\Box})$
$$\rg_{\proeet}(Y_C,{\mathbb B}_I)  \simeq [\rg_{\hk}(Y_C)\{r\}\otimes^{\LL_{\Box}}_{C^{\nr}}\B_{I,\log}]^{N=0}(-r).
$$
\end{enumerate}
\end{theorem}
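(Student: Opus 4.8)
\textbf{Proof proposal for Theorem \ref{CGN21}.}

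The plan is to reduce both assertions to a local computation and then reassemble globally via the ``all-coordinates'' descent of Bhatt--Morrow--Scholze. I would proceed as follows. First, using the $K(\pi,1)$-lemma for smooth rigid varieties (\cite{Sch}), replace $\rg_{\proeet}(Y_C,\mathbb B_I)$ locally on $Y$ by continuous group cohomology $\rg(\pi_1(Y_C),\B_I)$ of the arithmetic fundamental group with coefficients in the (small) period ring $\B_I$. By almost purity and extracting $p^\infty$-roots of a chosen coordinate chart, reduce $\pi_1(Y_C)$ to the abelian $p$-adic Lie group $\Gamma\simeq\Z_p^d$ that is the Galois group of the perfectoid cover; the period sheaf $\mathbb B_I$ on this cover becomes the perfectoid period ring $\widehat{\B}_I$. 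Then invoke decompletion (Tate--Sen / the Colmez--Nizio\l\ machinery of \cite{CN1}, \cite{SG}) to deperfect $\widehat{\B}_I$ down to $\B_I$ and render the $\Gamma$-action locally analytic, so that by Lazard's theorem \cite{laz} one may pass to Lie algebra cohomology $\rg(\operatorname{Lie}\Gamma,\B_I\otimes(-))$. The Koszul complex computing this is the de Rham complex of $Y$ tensored up to $\B_I$, but with the differential twisted by $t$; the zeros of $t$ on $\Spa(\B_I)$ --- indexed by $Z(I)$ --- are exactly where this twisted differential degenerates, and a local analysis of the cone of ``multiplication by $t$'' produces, in the stable range $\tau_{\leq r}$, the two-term complex on the right-hand side of \eqref{CGN2}: the $N=0$ part of Hyodo--Kato tensored with $\B_{I,\log}$, mapping to a sum of copies of the filtered de Rham cohomology, one per zero of $t$.

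For part (1) the new input beyond the proof of Theorem \ref{kol11} is the bookkeeping of \emph{several} zeros of $t$ in a general interval $I$ (rather than the one zero arranged in the choice of $I$ there): each zero contributes one copy of $(\rg_{\dr}(Y_K)\otimes^{\LL_{\Box}}_K\bdr^+)/\Fil^r$, coming from completing $\B_I$ at that point and applying Theorem \ref{ht9}(1) (the K\"unneth identification of Hodge-completed crystalline cohomology). The Hyodo--Kato side uses Theorem \ref{ht9}(2), the rigidity quasi-isomorphism $\rg_{\rm cris}(\mathcal Y)\simeq[\rg_{\hk}(Y_C)\otimes^{\LL_{\Box}}_{C^{\nr}}\bst^+]^{N=0}$, localized to the annulus $\Spa(\B_I)$, which replaces $\bst^+$ by $\B_{I,\log}$ and introduces the twist $\{r\}$ together with the overall Tate twist $(-r)$ recording the normalization $\B_I(r)$ vs.\ $\B_I$. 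Frobenius sends $I$ to $I/p$, so the quasi-isomorphism is only ``Frobenius equivariant'' in the weak sense that it intertwines $\phi\colon\rg_{\proeet}(Y_C,\mathbb B_I)\to\rg_{\proeet}(Y_C,\mathbb B_{I/p})$ with the corresponding map of Koszul models; this compatibility has to be tracked through the decompletion step, which is where Frobenius acts naturally on the perfectoid ring. Finally one globalizes: the local quasi-isomorphisms are functorial enough to descend along an \'etale (or all-coordinates) hypercover $\mathcal Y\to Y_C$ by affinoids with semistable model, which is exactly the globalization device used for Theorem \ref{kol11}.

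Part (2) is the special case where $I$ avoids all zeros of $t$, i.e.\ $Z(I)=\varnothing$: then the target sum is empty, the cone reduces to its first term, and the truncation $\tau_{\leq r}$ becomes unnecessary because the twisted differential is now an isomorphism onto its image in every degree (multiplication by the unit $t$ is invertible), so no degeneracy in high degrees can occur. Concretely, when $t\in\B_I^\times$ the Koszul complex with differential twisted by $t$ is isomorphic to the untwisted one, and the comparison with Lie algebra cohomology then gives $\rg_{\proeet}(Y_C,\mathbb B_I)\simeq[\rg_{\hk}(Y_C)\{r\}\otimes^{\LL_{\Box}}_{C^{\nr}}\B_{I,\log}]^{N=0}(-r)$ on the nose, with no restriction on the cohomological degree; the twist $\{r\}$ and $(-r)$ are again just normalization of which power of $t$ one trivializes.

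\textbf{Main obstacle.} I expect the hard part to be the decompletion/locally-analytic vectors step carried out \emph{uniformly in the interval} $I$ and \emph{compatibly with Frobenius} (since $\phi$ moves $I$ to $I/p$), together with the precise matching of the several local contributions at the zeros of $t$ with the Hodge-filtered de Rham factors; controlling the solid topology on all these objects --- so that the identifications live in $\sd(\B_{I,\Box})$ and not merely on cohomology --- and checking that the all-coordinates descent is compatible with the $t$-adic degeneration analysis at each zero is the delicate technical core, even though each ingredient (Lazard, Tate--Sen, Theorem \ref{ht9}) is available off the shelf.
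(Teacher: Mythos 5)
Your proposal follows essentially the same approach that the paper indicates for this computation: the $K(\pi,1)$/almost-purity/decompletion/Lazard chain sketched in Section~\ref{slonce1}, globalized via the all-coordinates device of~\cite{BMS}, with Theorem~\ref{ht9} (K\"unneth and Hyodo--Kato rigidity) supplying the identification of the two sides of the resulting cone. The survey does not spell out the proof of Theorem~\ref{CGN21} itself but explicitly says this is the key computation underlying Theorem~\ref{kol11} and that one proceeds ``as sketched in Section~\ref{slonce1}''; your account is a correct and faithful unwinding of that sketch, including the interpretation of $Z(I)$ as the locus where the $t$-twisted differential degenerates and the observation that part~(2) is the case $Z(I)=\varnothing$, where inverting $t$ removes the need for truncation. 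One small logical wrinkle: you describe part~(1) as requiring ``new input beyond the proof of Theorem~\ref{kol11},'' but the logical dependence is the reverse --- Theorem~\ref{CGN21} is an ingredient for Theorem~\ref{kol11}, where the interval is \emph{specialized} so that $t$ has one zero; what you mean (several zeros versus one) is clear, but the framing should be inverted. Also, a minor terminological slip: for $Y_C$ over an algebraically closed base, $\pi_1(Y_C)$ is the \emph{geometric} fundamental group, not the arithmetic one (the arithmetic case, as the paper's footnote notes, produces a semi-direct product rather than $\Z_p^d$); you use the correct group, just the wrong adjective.
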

\begin{remark}\label{BoscHK}
 Bosco in  \cite[Th. 4.1, Rem. 6.13]{GB2} proved a version of the comparison quasi-isomorphism \eqref{CGN2} for the sheaf ${\mathbb B}:=\varprojlim_I{\mathbb B}_I$, where the $t$-torsion
on the right-hand side of \eqref{CGN2} is incorporated to the left-hand side via the $ \LL\eta_t$-operator. 
\end{remark}

 Let $I\subset (0,\infty)$ be a compact interval with rational endpoints such that $t$ has one zero on ${\rm Spa}(\B_I)$.
 Tensoring both sides of \eqref{CGN2} for $r\geq 2d$ ($d=\dim Y_K$) with $\B_I/t^n\simeq \B^+_{\dr}/t^n$, $n\in\N$,   passing to  limit over $n$, and dropping the twist $r$ since we do not care about Frobenius anymore, one obtains the following:
\begin{corollary}  {\rm (Cohomology of  ${\mathbb B}^+_{\dr}$)}\label{finish1}
\begin{equation}\label{hungry1}
\rg_{\proeet}(Y_C,{\mathbb B}^+_{\dr})  \simeq F^0(\rg_{\dr}(Y_K)\otimes^{\LL_{\Box}}_{K}\bdr).
\end{equation}
\end{corollary}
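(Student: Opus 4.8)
The plan is to extract Corollary~\ref{finish1} from Theorem~\ref{CGN21}(1) along the lines indicated just before the statement: specialize the interval, remove the truncation, reduce the period sheaves modulo $t^n$, and pass to the $t$-adic limit. First I would fix a compact interval $I\subset(0,\infty)$ with rational endpoints on which $t$ has exactly one zero, so that $Z(I)$ in \eqref{CGN2} is a single point and the direct sum there has one summand. Since $Y_C$ has finite pro-\'etale cohomological dimension (bounded by $2d$, $d=\dim Y$) and all complexes occurring on the right of \eqref{CGN2} are concentrated in a bounded range of degrees, I would then take $r$ large (e.g.\ $r\geq 2d$) so that the truncations $\tau_{\leq r}$ become vacuous; Theorem~\ref{CGN21}(1) then reads as an honest quasi-isomorphism in $\sd(\B_{I,\Box})$
$$\rg_{\proeet}(Y_C,{\mathbb B}_I)\;\simeq\;[[\rg_{\hk}(Y_C)\{r\}\otimes^{\LL_{\Box}}_{C^{\nr}}\B_{I,\log}]^{N=0}\stackrel{\iota_{\hk}}{\longrightarrow}(\rg_{\dr}(Y_K)\otimes^{\LL_{\Box}}_{K}\bdr^+)/{\rm Fil}^r](-r).$$

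Next I would apply $-\otimes^{\LL_{\Box}}_{\B_I}\B_I/t^n$ to both sides, using that $\bdr^+$ is the completion of $\B_I$ at the unique zero of $t$ on $\Spa(\B_I)$, so $\B_I/t^n\simeq\bdr^+/t^n$ and, on the level of pro-\'etale sheaves, ${\mathbb B}_I/t^n\simeq{\mathbb B}^+_{\dr}/t^n$. On the left, $t$ is a nonzerodivisor on ${\mathbb B}_I$ and $\B_I/t^n$ is resolved by $[\B_I\stackrel{t^n}{\longrightarrow}\B_I]$, so this base change commutes with $\rg_{\proeet}$ and produces $\rg_{\proeet}(Y_C,{\mathbb B}^+_{\dr}/t^n)$. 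Passing to $\R\varprojlim_n$, and using that ${\mathbb B}^+_{\dr}=\varprojlim_n{\mathbb B}^+_{\dr}/t^n$ with surjective transition maps --- so $\R^1\varprojlim$ vanishes and $\rg_{\proeet}$ commutes with the limit --- identifies the left-hand side with $\rg_{\proeet}(Y_C,{\mathbb B}^+_{\dr})$.

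It remains to identify the limit of the right-hand side with $F^0(\rg_{\dr}(Y_K)\otimes^{\LL_{\Box}}_{K}\bdr)$. The de Rham term $(\rg_{\dr}(Y_K)\otimes^{\LL_{\Box}}_{K}\bdr^+)/{\rm Fil}^r$ is already $t$-adically complete, since ${\rm Fil}^r\supseteq t^r(\rg_{\dr}(Y_K)\otimes\bdr^+)$; the Hyodo-Kato term base-changes and passes to the limit as its $t$-adic completion $[\rg_{\hk}(Y_C)\{r\}\otimes^{\LL_{\Box}}_{C^{\nr}}\B^+_{\dr,\log}]^{N=0}$, where $\B^+_{\dr,\log}$ is the completion of $\B_{I,\log}$ at the zero of $t$. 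Here I would invoke a $\bdr^+$-coefficient form of Hyodo-Kato rigidity, a $t$-adically completed incarnation of Theorem~\ref{ht9} (combining \eqref{second} with the K\"unneth formula of Theorem~\ref{ht9}(1) and with the base change $\rg_{\dr}(Y_C)\simeq\rg_{\dr}(Y_K)\otimes^{\LL_{\Box}}_{K}C$), to identify $[\rg_{\hk}(Y_C)\{r\}\otimes^{\LL_{\Box}}_{C^{\nr}}\B^+_{\dr,\log}]^{N=0}$ with $\rg_{\dr}(Y_K)\otimes^{\LL_{\Box}}_{K}\bdr^+$ --- up to the Frobenius twist $\{r\}$, which we now discard since $\varphi$ is no longer tracked --- and $\iota_{\hk}$ with the canonical projection. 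The mapping fiber is then ${\rm Fil}^r(\rg_{\dr}(Y_K)\otimes^{\LL_{\Box}}_{K}\bdr^+)$, and the residual $(-r)$-twist (multiplication by $t^{-r}$, harmless now that $t$ is invertible) carries $\sum_i{\rm Fil}^i\rg_{\dr}(Y_K)\otimes t^{r-i}\bdr^+$ onto $\sum_i{\rm Fil}^i\rg_{\dr}(Y_K)\otimes t^{-i}\bdr^+=F^0(\rg_{\dr}(Y_K)\otimes^{\LL_{\Box}}_{K}\bdr)$, using that the Hodge filtration on $\rg_{\dr}(Y_K)$ is bounded and $r\geq d$. This is the desired right-hand side.

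The step I expect to be the real work is precisely this identification of the right-hand side: one must check that, after deleting the twists and inverting $t$, the Frobenius--monodromy structure degenerates onto the de Rham structure compatibly with the filtration --- concretely, that the $t$-adic completion of the $N=0$ part of $\rg_{\hk}(Y_C)\otimes\B_{I,\log}$ is exactly $\rg_{\dr}(Y_K)\otimes_K\bdr^+$ and that $\iota_{\hk}$ matches ${\rm Fil}^r$ on the de Rham side with $t^rF^0$ after the twist --- and one must be careful with the derived $t$-completions involved, e.g.\ that $\R\varprojlim_n$ interacts with the mapping fiber as claimed via the relevant sheaves and complexes being derived $t$-complete with the needed Mittag--Leffler property. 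Everything else --- the choice of $I$, the vanishing of the truncation, and the commutation of $-\otimes^{\LL_{\Box}}\bdr^+/t^n$ and $\R\varprojlim_n$ with $\rg_{\proeet}$ --- is formal.
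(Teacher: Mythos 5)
Your argument follows exactly the route the paper sketches in the sentence preceding Corollary~\ref{finish1}: choose $I$ with a single zero of $t$, take $r\geq 2d$ so the truncations disappear, tensor \eqref{CGN2} with $\B_I/t^n\simeq\bdr^+/t^n$, pass to $\R\varprojlim_n$, and drop the twist by $t^{-r}$. You have correctly filled in the technical details (commutation of base change and limits with $\rg_{\proeet}$, $t$-torsion of the de Rham term, the $F^0$ computation) and correctly identified the nontrivial input as a $\bdr^+$-coefficient version of Hyodo--Kato rigidity glued to the K\"unneth formula of Theorem~\ref{ht9}; note that the paper itself elides this last step (and mentions that Bosco~\cite[Th.\,1.8]{GB1} has a more direct proof via the ${\mathbb B}_{\dr}$-Poincar\'e Lemma).
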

 This result (in a more general setting) was  derived  in a simpler way by Bosco  in \cite[Th. 1.8]{GB1}  directly from the  ${\mathbb B}_{\dr}$-Poincar\'e Lemma.
 
\subsubsection{Pro-\'etale cohomology with compact support}
It came as a surprise to us  -- though, in hindsight, we should have anticipated it -- that compactly supported $p$-adic pro-\'etale cohomology does not behave vis a vis comparison theorems as well as the usual $p$-adic pro-\'etale cohomology.
 So it was a stroke of luck that in \cite{CDN1}, for example, we started to look for a geometric realization of the $p$-adic local Langlands correspondence in the usual cohomology (contrary to the traditional approach to geometrization of the $\ell$-adic local Langlands correspondence, where compactly supported cohomology is used).  

However not all is lost: we still have the basic comparison theorem and a vestige of the fundamental diagram.
  See Corollary  \ref{affinoids-comp} below for the latter; we will discuss the former here.

 It was not clear initially what the right definition  of  compactly supported $p$-adic pro-\'etale cohomology of a smooth partially proper variety $Y$  should be.
 There were two immediate  guesses (see \cite[Sec. A.2]{CDHN} for a discussion): 
 \begin{align}\label{main-def1}
 \R\Gamma^{(1)}_{\proeet,c}(Y,\Q_p) & :=(\R\lim_n\R\Gamma_{\proeet,c}(Y,\Z/p^n))\otimes_{\Z_p}^{\LL_{\Box}}\Q_p;\\
 \text{or }\R\Gamma^{(2)}_{\proeet,c}(Y,\Q_p) &: =[\R\Gamma_{\proeet}(Y,\Q_p)\to \R\Gamma_{\proeet}(\partial Y,\Q_p)],\notag\\
   & \hphantom{aaa} \text{with } \R\Gamma(\partial Y,\Q_p) =\colim_{Z\in\Phi_Y}\R\Gamma(Y\setminus Z,\Q_p),\notag
 \end{align}
 where $\Phi_Y$ denotes the set of quasi-compact opens in $Y$ and    the brackets $[...]$ denote the mapping fiber.
 The first definition  is just a continuous version of $\Z/p^n$-cohomology.
  The heuristic for the second definition comes from the (possible) comparison with de Rham cohomology and the classical definition of van der Put of compactly supported version of the latter.
 Surprisingly,   this definition agrees with Huber's definition of compactly supported $p$-adic \'etale (sic !) cohomology: this follows from the  canonical quasi-isomorphism
  $$\R\Gamma^{(2)}_{\proeet,c}(Y,\Z_p)_{\Q_p}\stackrel{\sim}{\to}\R\Gamma^{(2)}_{\proeet,c}(Y,\Q_p).$$ The fact that this might be the right definition comes from the work on dualities which pairs this cohomology with $p$-adic pro-\'etale cohomology (see \cite{CGN}, \cite{CGN2}, \cite{ZL}, \cite{ALBM}).
 Hence we set
$$
\R\Gamma_{\proeet,c}(Y,\Q_p) :=\R\Gamma^{(2)}_{\proeet,c}(Y,\Q_p).
$$
\begin{theorem} \label{main-kg} 
{\rm (Basic comparison theorem, \cite[Cor.\,1.6]{AGN})} 
The analogs of Theorem \ref{basic1} and Theorem \ref{kol11}
hold for the compactly supported $p$-adic pro-\'etale cohomology. 
\end{theorem}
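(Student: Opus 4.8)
The plan is to deduce the compactly supported statements from the ordinary ones (Theorems~\ref{basic1} and~\ref{kol11}) by exploiting the defining presentation $\R\Gamma_{\proeet,c}(Y,\Q_p)=[\R\Gamma_{\proeet}(Y,\Q_p)\to\R\Gamma_{\proeet}(\partial Y,\Q_p)]$ as a mapping fibre, together with $\R\Gamma_{\proeet}(\partial Y,\Q_p)=\colim_{Z\in\Phi_Y}\R\Gamma_{\proeet}((Y\moins Z)_C,\Q_p)$, a filtered colimit of pro-\'etale cohomologies of the subspaces $Y\moins Z$. First I would introduce compactly supported avatars of all the cohomologies occurring in Theorems~\ref{basic1}, \ref{kol11} and~\ref{ht9}: boundary cohomologies $\R\Gamma_{\dr}(\partial Y_K):=\colim_Z\R\Gamma_{\dr}((Y\moins Z)_K)$, $\R\Gamma_{\hk}(\partial Y_C):=\colim_Z\R\Gamma_{\hk}((Y\moins Z)_C)$, and likewise for the Bloch-Kato and Fontaine-Messing syntomic complexes, these filtered colimits carrying the limit $(\varphi,N,G_K)$-structures and the Hyodo-Kato isomorphism; and then, for each such theory $?$, set $\R\Gamma_{?,c}(Y):=[\R\Gamma_?(Y)\to\R\Gamma_?(\partial Y)]$, with ${\rm DR}^{r,i}_c$ and $X^{r,i}_c$ defined accordingly from $\R\Gamma_{\dr,c}$ and $\R\Gamma_{\hk,c}$ (and $\R\Gamma_{\dr,c}(Y_K)$ recovering van der Put's compactly supported de Rham cohomology).

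The period morphisms $\alpha_{\rm BK}$ and $\alpha_{\rm FM}$, together with the quasi-isomorphisms of Theorem~\ref{ht9}, are natural transformations of functors on smooth partially proper rigid varieties with their canonical overconvergent structure, hence compatible with restriction along the open immersions $Y\moins Z\hookrightarrow Y$; passing to the colimit over $\Phi_Y$ gives versions over $\partial Y$, and taking the mapping fibre of the resulting square produces compactly supported period maps $\alpha_{\rm BK,c}$ and $\alpha_{\rm FM,c}$. To see that these are quasi-isomorphisms after $\tau_{\leq r}$ I would use that filtered colimits in $\sd(\Q_{p,\Box})$ are exact, so they commute with $\tau_{\leq r}$, with cohomology, with the solid tensor products $\otimes^{\LL_{\Box}}_{C^{\nr}}\bst^+$ and $\otimes^{\LL_{\Box}}_K\bdr^+$, and with the fibres defining $(-)^{N=0,\varphi=p^r}$ and $(-)/{\rm Fil}^r$; hence Theorems~\ref{basic1}, \ref{kol11} and~\ref{ht9} applied to each $Y\moins Z$ give the corresponding comparisons over $\partial Y$ in degrees $\leq r$. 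Feeding the comparisons over $Y$ and over $\partial Y$ into the long exact sequence of the mapping fibre $\R\Gamma_{\proeet,c}=[\R\Gamma_{\proeet}(Y)\to\R\Gamma_{\proeet}(\partial Y)]$ and applying the five lemma upgrades them to the comparison for $H^n_{\proeet,c}(Y_C,\Q_p(r))$ in the same range $n\leq r$ --- no degree shift occurs, precisely because the comparison holds simultaneously on $Y$ and on $\partial Y$. The long exact sequence \eqref{basic2} for the compactly supported theory then follows from the same bifibre formalism, now read off from $\R\Gamma^{\rm BK}_{\synt,c}=[X^{r,\bullet}_c\to{\rm DR}^{r,\bullet}_c]$.

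The main obstacle will be the analysis of the boundary $\partial Y$, which is not a rigid analytic variety. One must first make precise sense of $Y\moins Z$ for a quasi-compact open $Z$ (its complement being closed rather than open, this may force one to pass to a cofinal subsystem or to the adoc/pseudo-adic formalism already invoked in Remark~\ref{hk4}), and then check that the colimit presentations of the de Rham, Hyodo-Kato and Fontaine-Messing syntomic cohomologies of $\partial Y$ are the ``correct'' objects. In particular one needs that the Fontaine-Messing construction underlying Theorem~\ref{kol11} --- which proceeds via an \'etale hypercovering by affinoids with semistable models and the all-coordinates globalization borrowed from~\cite{BMS}, and uses the computation of the cohomology of the sheaves $\B_I$ from Theorem~\ref{CGN21} --- is compatible with the colimit over $\Phi_Y$, and that the operators $N$ and $\varphi$ and the solid tensor products intertwine with this colimit at the level needed; this is where exactness of filtered colimits in $\sd(\Q_{p,\Box})$ enters essentially. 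A lesser point, useful for matching the statement with the heuristic that $\partial Y$ ``behaves like a proper variety'' and with the duality theory of~\cite{CGN,CGN2,ALBM} but not needed for the bare comparison of Theorem~\ref{main-kg}, is to identify $\R\Gamma_{\hk}(\partial Y_C)$ with a perfect complex over $C^{\nr}$ in favourable cases such as curves, which makes $\R\Gamma_{\proeet,c}(Y_C,\Q_p)$ visibly finite-dimensional there.
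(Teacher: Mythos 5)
The survey itself does not reproduce a proof of Theorem~\ref{main-kg}; it is quoted from \cite[Cor.\,1.6]{AGN}, so one can only compare your proposal against the strategy that the surrounding material (the definition of $\R\Gamma^{(2)}_{\proeet,c}$, the fundamental diagram of Corollary~\ref{affinoids-comp}, the computation for $\mathbb A^d_K$) makes visible. With that caveat, your plan --- geometrize the mapping fibre $[\R\Gamma_{\proeet}(Y)\to\R\Gamma_{\proeet}(\partial Y)]$, define compactly supported avatars of every theory by the same recipe, transport the period maps along the restriction maps $Y\moins Z\hookrightarrow Y$, pass to the filtered colimit over $\Phi_Y$, and apply the five lemma in the fibre --- is exactly the natural route and is, as far as one can tell from this text, the one taken in \cite{AGN}. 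Your observation that no degree loss occurs (the fibre of two $\tau_{\leq r}$-quasi-isomorphisms is again a $\tau_{\leq r}$-quasi-isomorphism, by the five lemma applied to the long exact sequences of the fibres) is correct, and the commutation of filtered colimits with $\tau_{\leq r}$, with the solid tensor products $\otimes^{\LL_\Box}$ (left adjoints), and with the fibres defining $(-)^{N=0,\varphi=p^r}$ and $(-)/{\rm Fil}^r$ (fibres = shifted cofibres in a stable category) is all sound.

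Two points deserve sharpening, though. First, your worry that $Y\moins Z$ is ``closed rather than open'' is misplaced: for $Y$ partially proper and $Z$ a quasi-compact open, $Y\moins Z$ is an honest open rigid subspace (for $Y$ Stein and $Z$ a term of an affinoid exhaustion, $Y\moins Z$ is again Stein), so Theorems~\ref{basic1} and~\ref{kol11} apply to it without recourse to the adoc formalism --- the latter is only needed for the limiting object $\partial Y$ itself, which your colimit presentation precisely avoids having to treat directly. Second, the real substance, which you gesture at but do not resolve, is the compatibility of the Hyodo--Kato and Fontaine--Messing \emph{constructions} (not merely the resulting cohomology classes) with restriction along $Y\moins Z'\hookrightarrow Y\moins Z$ and with the colimit: the Hyodo--Kato complex is built via Hartl--Temkin alterations, the Fontaine--Messing complex via hypercoverings by affinoids with semistable models, and one must exhibit compatible systems of such choices over the directed system $\Phi_Y$ before the colimit even makes sense as a diagram. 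This is where \cite{AGN} presumably spends its effort; it is not a formal consequence of ``naturality'' but requires arranging the auxiliary data coherently. Finally, be aware that although your derived statement goes through without hypotheses, the passage from it to a non-derived fundamental diagram (Corollary~\ref{affinoids-comp}) is \emph{not} formal in the compactly supported case: the Frobenius slopes of $H^*_{\hk,c}$ are shifted by the dimension $d$ (visible in the $t^d\B^+_{\st}$ twist), and the resulting slope condition has no analogue in the ordinary Theorem~\ref{affinoids}; this is a genuine asymmetry between the two theories that the mapping-fibre formalism alone does not explain.
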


   \begin{example}
Let $Y:={\mathbb A}_K^d$, $d\geq 1$, be the rigid analytic affine space of dimension $d$ over $K$. 
\begin{enumerate}[leftmargin=*]
\item 
 We have the following isomorphisms  in solid $\Q_{p}$-modules
\begin{align*}
&  H^{i}_{\proeet,c}(Y_C,\Q_p) =0, \quad {\text{if $i<d$ or $i>2d$;}}\\
& H^{i}_{\proeet,c}(Y_C,\Q_p(i-d)) \simeq 
 H_c^d(Y_C, \Omega^{i-d-1})/\kker d,\quad \mbox{if }  d\le i \le 2d-1.
\end{align*}
\item  We have an exact sequence  in solid $\Q_{p}$-modules
$$ 0 \to H_c^d(Y_C, \Omega^{d-1})/ \kker d\to H_{\proeet,c}^{2d}(Y_C, \Q_p(d)) \to \Q_p \to 0. 
$$
\end{enumerate}
For $d=1$ and $i\geq 0$,   this yields:

$\bullet$ vanishings $H^i_{\proeet,c}({\mathbb A}_C^1, \Q_p)=0$, for $i\neq 2$,

$\bullet$ an exact sequence of solid $\Q_p$-modules
   \[ 0 \to H^1_c({\mathbb A}_C^1,\so) \to H^2_{\proeet,c}({\mathbb A}_C^1, \Q_p(1)) \to \Q_p\to 0. \] 
   \end{example}

\subsubsection{The arithmetic case}\label{arit1}
 The reader might wonder  what can we say about the $p$-adic pro-\'etale cohomology  of  a smooth dagger variety $Y$  itself (still defined over $K$).
 Turns out that in this case we do have an analog of the syntomic-pro-\'etale comparison \eqref{BK1} 
with the arithmetic syntomic cohomology 
$$\R\Gamma^{\rm BK}_{\synt}(Y):=
\big[[\R\Gamma_{\hk}(Y)]^{N=0,\phi=p^r}\stackrel{\iota_{\hk}}{\longrightarrow}\R\Gamma_{\dr}(Y)/{\rm Fil}^r\big],
$$ where
 $\R\Gamma_{\hk}(Y)$ is the arithmetic version of Hyodo-Kato cohomology. 
The existence of such an analog was not obvious at all to experts 
as the original methods (Hyodo, Kato, Tsuji) of proving Theorem \ref{kol11} in the algebraic case via the Milnor symbols were not easily adaptable to the arithmetic setting\footnote{One of us tried very hard to make this approach work but the torsion that appears in crystalline
cohomology of ramified extension was very difficult to deal with.}.
 It is here that the $(\phi,\Gamma)$-approach\footnote{The starting point of our investigations was the
observation that the syntomic and $(\varphi,\Gamma)$-module approaches to
compute $H^\bullet(G_F,\Q_p(r))$ (for $r\geq 1$ and $F$ a finite, unramified,
extension of $\Q_p$) produced complexes that look so similar that there ought to be a way to go from
one to the other via simple quasi-isomorphisms. Also, the syntomic approach proved to be much
easier to use to do actual computations (but it only applies to very special coefficients -- 
though more general than $\Q_p(r)$ with $r\geq 1$, see~\cite{abhi}).} 
showed its superiority (see \cite{AI}
for basic facts about relative $(\phi,\Gamma)$-modules).

  The analog of \eqref{BK1}
gives  the following:
\begin{theorem} \label{basicA}
{\rm (Basic arithmetic comparison theorem \cite{CN1,CN3})}
Let $n\geq r$ be positive integers. We have a long exact sequence in $\sd(\Q_{p,\Box})$
\begin{equation}\label{basic2a}
\cdots\to X^{r,n-1}\to {\rm DR}^{r,n-1}\to H^n_{\proeet}(Y,\Q_p(r))\to
X^{r,n}\to {\rm DR}^{r,n}\to\cdots
\end{equation}
where
$$X^{r,i}:=H^i([\R\Gamma_{\rm HK}(Y)]^{N=0,\varphi=p^r}),
\quad
{\rm DR}^{r,i}:=H^i(\rg_{\rm dR}(Y)/{\rm Fil}^r).
$$
\end{theorem}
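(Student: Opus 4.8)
The plan is to reduce Theorem~\ref{basicA} to an arithmetic analog of the syntomic--pro-\'etale comparison~\eqref{BK1}, namely a period quasi-isomorphism
$$\alpha^{\rm arit}_{\rm BK}:\quad \tau_{\leq r}\rg_{\proeet}(Y,\Q_p(r))\ \stackrel{\sim}{\longrightarrow}\ \tau_{\leq r}\rg^{\rm BK}_{\synt}(Y)$$
in $\sd(\Q_{p,\Box})$, with $\rg^{\rm BK}_{\synt}(Y)$ the arithmetic Bloch--Kato syntomic cohomology introduced just before the statement. Granting $\alpha^{\rm arit}_{\rm BK}$, the long exact sequence~\eqref{basic2a} is simply the cohomology long exact sequence of the defining mapping-fiber triangle $\rg^{\rm BK}_{\synt}(Y)\to[\R\Gamma_{\hk}(Y)]^{N=0,\varphi=p^r}\stackrel{\iota_{\hk}}{\longrightarrow}\R\Gamma_{\dr}(Y)/{\rm Fil}^r$: taking cohomology produces $\cdots\to {\rm DR}^{r,n-1}\to H^n(\rg^{\rm BK}_{\synt}(Y))\to X^{r,n}\to {\rm DR}^{r,n}\to\cdots$ with $X^{r,n}=H^n([\R\Gamma_{\rm HK}(Y)]^{N=0,\varphi=p^r})$ and ${\rm DR}^{r,n}=H^n(\rg_{\dr}(Y)/{\rm Fil}^r)$ as in the statement, and $\alpha^{\rm arit}_{\rm BK}$ identifies $H^n(\rg^{\rm BK}_{\synt}(Y))$ with $H^n_{\proeet}(Y,\Q_p(r))$; since the period rings enter in their $+$-form, i.e.\ $t$ is not inverted, this is exactly what accounts for the hypothesis $n\geq r$.

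So the content is $\alpha^{\rm arit}_{\rm BK}$, which I would establish following the two-step pattern of the geometric case (Theorems~\ref{kol11} and~\ref{ht9}). \emph{Step 1 (arithmetic Fontaine--Messing comparison).} Form $\rg^{\rm FM}_{\rm syn}(Y,\Q_p(r))$ as in~\eqref{first}, now from the absolute log-crystalline cohomology of semistable models \emph{over $\so_K$} of the terms of an \'etale hypercovering ${\cal Y}$ of $Y$ by affinoids (produced by Hartl--Temkin alterations), and prove that the Fontaine--Messing period morphism $\alpha_{\rm FM}: \rg^{\rm FM}_{\rm syn}(Y,\Q_p(r))\to\rg_{\proeet}(Y,\Q_p(r))$ is a quasi-isomorphism after $\tau_{\leq r}$. \emph{Step 2 (the two syntomic cohomologies agree).} Identify $\rg^{\rm FM}_{\rm syn}(Y,\Q_p(r))$ with $\rg^{\rm BK}_{\synt}(Y)$ using the arithmetic versions of Theorem~\ref{ht9}: the reinterpretation of Hodge-completed crystalline cohomology as Hodge-completed derived de Rham cohomology yields $\rg_{\rm cris}({\cal Y})/{\rm Fil}^r\simeq\rg_{\dr}(Y)/{\rm Fil}^r$, and arithmetic Hyodo--Kato rigidity~\eqref{second} gives a Frobenius- and monodromy-compatible identification $\rg_{\rm cris}({\cal Y})\simeq[\rg_{\hk}(Y)]^{N=0}$ after the relevant base change; this last point rests on constructing a highly functorial section of the canonical projection $\rg_{\rm cris}({\cal Y})\to\rg_{\hk}(Y)$ via a Dwork-type lemma, using that Frobenius is an automorphism on $\rg_{\hk}$ but nilpotent on the kernel. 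Feeding these into the fundamental exact sequence $0\to\Q_p(r)\to F^r{\mathbb B}^+_{\crr}\stackrel{p^r-\varphi}{\longrightarrow}{\mathbb B}^+_{\crr}\to0$ of syntomic sheaves that induces $\alpha_{\rm FM}$ turns $\alpha_{\rm FM}$ into $\alpha^{\rm arit}_{\rm BK}$.

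The main obstacle is Step~1, and it is here that the arithmetic case genuinely departs from the algebraic one: the Fontaine--Messing--Hyodo--Kato--Tsuji argument via Milnor symbols does not survive the passage to a ramified base (the $p$-torsion in crystalline cohomology of ramified extensions is intractable), so one must use instead the $(\varphi,\Gamma)$-module approach of~\cite{CN1,CN3}, built on the relative $(\varphi,\Gamma)$-modules of~\cite{AI}. Concretely I would, following the steps sketched in Section~\ref{slonce1}: localize on $Y$ via the $K(\pi,1)$-lemma to rewrite $\rg_{\proeet}$ as continuous cohomology of $\pi_1(Y_C)$; apply mixed-characteristic Artin--Schreier and almost purity to replace $\pi_1(Y_C)$ by the $p$-adic Lie group $\Gamma\simeq\Gamma_K\ltimes\Z_p^d$ (with $\Gamma_K$ an open subgroup of $\Z_p^\dual$ and $d=\dim Y$) acting on a perfectoid period ring; decomplete so that the $\Gamma$-action becomes locally analytic; invoke Lazard to pass to Lie-algebra cohomology, computed by a Koszul complex which reproduces the de Rham complex with the differential multiplied by $t$; and match this complex, term by term, with the local arithmetic syntomic complex, expressing $\alpha_{\rm FM}$ locally as a chain of quasi-isomorphisms. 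The local statements are then globalized by the all-coordinates method borrowed from~\cite{BMS} (as in~\cite{SG}), and the comparison, available at first only after a high Tate twist, is stabilized and untwisted --- which is what pins down the truncation range and hence the hypothesis $n\geq r$. Since everything is carried out in $\sd(\Q_{p,\Box})$, the resulting sequence~\eqref{basic2a} is automatically an exact sequence of solid $\Q_p$-vector spaces.
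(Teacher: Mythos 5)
Your proposal tracks the paper's own route and what \cite{CN1,CN3} actually carry out: deduce the long exact sequence from the arithmetic Bloch--Kato/pro-\'etale period quasi-isomorphism $\alpha^{\rm arit}_{\rm BK}$, obtain the latter from the Fontaine--Messing comparison combined with arithmetic Hyodo--Kato rigidity and the K\"unneth step, and prove the Fontaine--Messing comparison by the $(\varphi,\Gamma)$-module method (localization via $K(\pi,1)$, almost purity, decompletion, Lazard, all-coordinates globalization) precisely because the Milnor-symbol method of the algebraic/geometric setting does not survive a ramified base. The only small slip is writing $\pi_1(Y_C)$ where, in the arithmetic case, one works with $\pi_1(Y)$ itself --- which is exactly why $\Gamma\simeq\Gamma_K\ltimes\Z_p^d$ appears directly.
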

We note  that in general $X^{r,i}$ is not equal to $H^i_{\hk}(Y)^{N=0,\phi=p^r}$ 
(see \eqref{paa2} below).  
The group ${\rm DR}^{r,i}$ tends to be  huge: in the case $Y$ is Stein we have
$$
{\rm DR}^{r,r-1}\simeq \Omega^{r-1}(Y)/{\rm Im}\,d;\quad {\rm DR}^{r,i}=0, \quad i\geq r.
$$

Theorem \ref{basicA} yields the following computation (\cite[Cor.\,1.4]{CN1},\cite[Th.\,1.3]{CN3}):
  \begin{enumerate}
    \item 
    For $1\leq i\leq r-1$, the boundary map induced by the sequence  (\ref{basic2a})
    $$
    \partial_{r}: {H}^{i-1}_{\dr}(Y)\to {H}^{i}_{\proeet}(Y,\Q_p(r))
    $$
    is an isomorphism. In particular, the cohomology $ {H}^{i}_{\proeet}(Y,\Q_p(r))$  has a natural $K$-structure. 

    \item We have exact sequences
    \begin{align}\label{paa2}
   &  0\to {\rm DR}^{r-1,r}\lomapr{\partial_r} {H}^r_{\proeet}(Y,\Q_p(r))\to X^{r,r} \to {\rm DR}^{r,r}\notag\\
   & 
0\to  {H}^{r-1}_{\hk}(Y)^{\phi=p^{r-1}}  \to X^{r,r}\to {H}^{r}_{\hk}(Y)^{N=0,\phi=p^r}\to 0
    \end{align}
\end{enumerate}
\begin{remark}
It follows form the above results that,
if $[K:\Q_p]<\infty$, and if $Y$ is an overconvergent affinoid 
(or, more generally, if $Y$ has finite dimensional de Rham cohomology),
then ${H}^{i}_{\proeet}(Y,\Q_p(r))$ is finite dimensional over $\Q_p$ if $0\leq i\leq r-1$.
This is in sharp contrast with what happens for $i=r$ where ${\rm DR}^{r,r-1}$ injects
into ${H}^{i}_{\proeet}(Y,\Q_p(r))$, and can be huge since we have the exact sequence
 $$
 0\to H^{r-1}_{\dr}(Y)\to {\rm DR}^{r,r-1}\to \ker \pi\to 0,
 $$
 where $\pi: \Omega^r(Y)^{d=0}\to H^r_{\dr}(Y)$ is the canonical map.
 \end{remark}

\section{Pro-\'etale cohomology of affinoids}
We work in this section in the $\infty$-derived category $\sd(C_{\Q_p})$ 
of locally convex topological vector spaces over $\Q_p$ (this is less robust than the condensed
version for doing homological algebra, but is sufficient in many cases).

\subsection{Improving the comparison with syntomic cohomology}
The de Rham cohomology of affinoids is notoriously pathological (it is infinite dimensional
and non separated); this is the reason for making them overconvergent
and defining dagger varieties. On the other hand, if $Y$ is an affinoid over $K$,
its pro-\'etale cohomology is much better behaved:
$H^r_{\proeet}(Y_C,\Q_p(r))$ is a Banach space (an elementary
fact in dimension~$1$; the general case was proven by Bosco~\cite{GB3}).


Now, the methods used to prove Theorem \ref{basic1} yield an exact sequence\footnote{We use here the $\breve{C}$-version of Hyodo-Kato cohomology.} in ${\rm LH}(C_{\Q_p})$ (the left heart of $\sd(C_{\Q_p})$)
$$0\to \Omega^{r-1}(Y_C)/{\rm Ker}\,d\to H^r_{\proeet}(Y_C,\Q_p(r))
\to (\bst^+\wotimes_{\breve{C}} H^r_{\rm HK}(Y_C))^{N=0,\varphi=p^r}\to 0$$
This is not very satisfactory since 
$H^r_{\rm HK}(Y_C)$ is not separated, which makes the right-hand term not separated.
We denote by $H^1_{\rm HK}(Y_C)^{\rm sep}$ its quotient by the topological closure of $0$;
this is a finite dimensional $\breve C$-module, and we still have a Hyodo-Kato isomorphism
$$C\otimes_{\breve C}H^1_{\rm HK}(Y_C)^{\rm sep}\simeq H^1_{\rm dR}(Y_C)^{\rm sep}.
$$

If $Y$ be an affinoid over $C$, let $\O(Y)^{\dual\dual}$ be the group of
$f\in\O(Y)^\dual$ such that $f-1$ is topologically nilpotent.
The following theorem fixes the above mentionned problem in dimension~$1$.
\begin{theorem}\phantomsection\label{intro1.1}
{\rm (\cite[Th.\,0.1]{CDN2})}
If $Y$ is an affinoid of dimension $1$,
we have a commutative diagram of 
Banach spaces \footnote{If $M$ is a $\Z$-module,
we set $\Q_p\wotimes M:=\Q_p\otimes_{\Z_p}(\varprojlim_n M/p^nM)$.}
$$
\xymatrix@R=.6cm@C=.5cm{
0\ar[r] &\Q_p\wotimes \O(Y)^{\dual\dual}
\ar[r]\ar@{=}[d] & H^1_{\proeet}(Y,\Q_p(1))\ar[d] \ar[r] &
(\bst^+\otimes_{\breve C} H^1_{\rm HK}(Y)^{\rm sep})^{N=0,\varphi=p}\ar[r]\ar[d]^-{\theta\otimes\iota_{\rm HK}} & 0\\
0\ar[r]& \Q_p\wotimes \O(Y)^{\dual\dual} \ar[r]^-{\rm dlog}
& \Omega^1(Y) \ar[r] & H^1_{\rm dR}(Y)^{\rm sep}\ar[r] & 0
} $$
with upper row being exact and lower row being a complex
and all arrows having closed image.
\end{theorem}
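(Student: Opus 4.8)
\emph{Strategy.} The plan is to derive Theorem~\ref{intro1.1} from the affinoid incarnation of the basic comparison theorem (Theorem~\ref{basic1}, together with Remark~\ref{ht8}), i.e.\ from the short exact sequence
\[0\to \O(Y_C)/C\to H^1_{\proeet}(Y_C,\Q_p(1))\to \big(\bst^+\wotimes_{\breve C}H^1_{\rm HK}(Y_C)\big)^{N=0,\varphi=p}\to 0\]
displayed just above the statement, by doing the topological bookkeeping needed to replace the non-separated Hyodo--Kato module $H^1_{\rm HK}(Y_C)$ by its finite-dimensional separated quotient. Write $V:=H^1_{\rm HK}(Y_C)$, let $V^{\rm sep}$ be its maximal Hausdorff quotient (finite-dimensional over $\breve C$ by the finiteness built into the construction) and $V^0=\overline{\{0\}}=\ker(V\to V^{\rm sep})$, a countable projective limit of finite-dimensional $\breve C$-vector spaces.

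\emph{The top row.} I would first introduce the $p$-adic Kummer map $\kappa\colon \O(Y)^{\dual\dual}\to H^1_{\proeet}(Y_C,\Q_p(1))$, sending a unit $f$ with $f-1$ topologically nilpotent to the class of the $\Q_p(1)$-torsor $\{f^{1/p^n}\}_n$ (topological nilpotence of $f-1$ is exactly what makes $\log f$, and hence this class, behave well), extended $\Q_p$-linearly to $\Q_p\wotimes\O(Y)^{\dual\dual}$. I would then show: (i) the composite of $\kappa$ with the surjection onto $(\bst^+\wotimes_{\breve C}V)^{N=0,\varphi=p}$ followed by projection onto $(\bst^+\wotimes_{\breve C}V^{\rm sep})^{N=0,\varphi=p}$ vanishes, because the de Rham realization of $\kappa(f)$ is $d\log f = df/f$, which is exact, so the Hyodo--Kato realization lands in $V^0$ after the comparison isomorphisms; (ii) applying the functor $M\mapsto(\bst^+\wotimes_{\breve C}M)^{N=0,\varphi=p}$ to $0\to V^0\to V\to V^{\rm sep}\to 0$ again yields a short exact sequence (an $H^1$-vanishing over the Fargues--Fontaine curve for $V^0$), so that the comparison sequence exhibits $H^1_{\proeet}(Y_C,\Q_p(1))$ as an extension of $(\bst^+\wotimes_{\breve C}V^{\rm sep})^{N=0,\varphi=p}$ by an extension of $(\bst^+\wotimes_{\breve C}V^0)^{N=0,\varphi=p}$ by $\O(Y_C)/C$; (iii) $\kappa$ is injective with closed image and identifies $\Q_p\wotimes\O(Y)^{\dual\dual}$ with precisely this last extension. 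Point (iii) is a completion computation: via $\log$ (an isomorphism on $\O(Y)^{\dual\dual}$ modulo $\mu_{p^\infty}$) and the sandwich $p\,\O(Y)^{\circ}\subseteq\mathfrak m_{\O(Y)}\subseteq\O(Y)^{\circ}$, one matches $\Q_p\wotimes\O(Y)^{\dual\dual}$ with the relevant $\Q_p$-Banach space, the compatibility of $\kappa$ with the comparison sequence being read off the construction of $\alpha_{\rm FM}$ in degree $1$. This produces the top row, exact, with Banach ends and finite-dimensional cokernel.

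\emph{The bottom row and the squares.} The map ${\rm dlog}\colon \Q_p\wotimes\O(Y)^{\dual\dual}\to\Omega^1(Y)$ is $f\mapsto df/f$; its image is the closure $\overline{d\O(Y)}$ of the exact forms --- again a logarithm/completion statement, using van der Put's description of $H^1_{\rm dR}$ of affinoid curves --- which is by definition $\ker\big(\Omega^1(Y)\to H^1_{\rm dR}(Y)^{\rm sep}\big)$; hence the bottom line is a complex exact at $\Omega^1(Y)$ and at $H^1_{\rm dR}(Y)^{\rm sep}$ but possibly not at the first spot (that is where the middle vertical acquires the constants as kernel). The left vertical is the identity; the right vertical is $(\bst^+\wotimes_{\breve C}V^{\rm sep})^{N=0,\varphi=p}\to C\otimes_{\breve C}V^{\rm sep}\stackrel{\iota_{\rm HK}}{\to}H^1_{\rm dR}(Y)^{\rm sep}$ induced by $\theta\colon\bst^+\to\bdr^+\to C$; the middle vertical is the de Rham period map $H^1_{\proeet}(Y_C,\Q_p(1))\to H^1\big(\rg_{\proeet}(Y_C,\mathbb B^+_{\rm dR})\big)\simeq F^0\big(H^1(\rg_{\rm dR}(Y_C)\otimes\bdr)\big)$ of Corollary~\ref{finish1}, followed by projection to $\Omega^1(Y)$. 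Commutativity of the left square is then the classical identity ``the de Rham realization of the Kummer class of $f$ is $d\log f$'', and commutativity of the right square is the compatibility of the basic comparison with de Rham realizations (the Hyodo--Kato isomorphism being compatible with $\iota_{\rm HK}$).

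\emph{Main obstacle.} I expect the serious point to be steps (ii)--(iii) of the top row: keeping $0\to V^0\to V\to V^{\rm sep}\to 0$ short exact after applying $(\bst^+\wotimes_{\breve C}-)^{N=0,\varphi=p}$ --- i.e.\ the relevant $H^1$-vanishing over the Fargues--Fontaine curve for the merely pro-finite-dimensional, non-separated module $V^0$ --- and then identifying the resulting infinite-dimensional $\Q_p$-Banach space with $\Q_p\wotimes\O(Y)^{\dual\dual}$ with all maps of closed image. This is precisely where the non-separatedness of de Rham cohomology of affinoids, the $p$-adic completion of the integral structure $\O(Y)^{\circ}$, and the hypothesis ``$f-1$ topologically nilpotent'' interact, and it is what confines the argument to dimension $1$, where $V^{\rm sep}$ is finite-dimensional and the Fargues--Fontaine input is manageable. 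The identification of $H^1_{\proeet}$ with Kummer classes and the two commutativities are, by comparison, fairly formal consequences of Theorems~\ref{basic1}--\ref{finish1}.
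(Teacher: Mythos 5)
Your approach is genuinely different from the paper's, and it has a substantive gap at the point you yourself flag. The paper does \emph{not} derive Theorem~\ref{intro1.1} top-down from the exact sequence supplied by Theorem~\ref{basic1}. Instead it rebuilds everything bottom-up and explicitly in dimension~$1$: a triangulation and its ``pattern of curve'' (shorts and legs glued along ghost circles, Section~\ref{intro6.1}), the symbol group ${\rm Symb}_p(Y)$, and explicit \'etale and syntomic regulators yielding
$H^1_{\eet}(Y,\Z_p(1))\overset{\sim}\leftarrow{\rm Symb}_p(Y)\overset{\sim}\to H^1_{\rm syn}(Y_S,\Z_p(1))$.
From a hand-built complex $C_{\rm dR}(\widetilde Y_S)$ and a concrete section
$\overline C_{\rm dR}(\breve Y_S)\to\Q_p\otimes\overline C_{\rm dR}(\widetilde Y_S)$
it then produces the Hyodo--Kato isomorphism directly (the paper stresses this construction becomes ``straightforward'' in dimension~$1$), obtains the exact sequence
$0\to\O(Y)/C\to\Q_p\otimes H^1_{\rm syn}(Y_S,\Z_p(1))\to(\bcris^+\wotimes_{\breve C}H^1_{\rm HK}(Y))^{\varphi=p}\to 0$,
and passes to ${\rm sep}$ by comparing $(\Q_p\wotimes\O(Y)^{\dual\dual})/\exp(\O(Y))$ with the closure of $0$; independence of $r\mapsto\tilde p^r$ is then handled by Picard--Lefschetz and the passage to $(\bst^+)^{N=0}$. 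As a by-product the paper also extracts the combinatorial description of $H^1_{\rm HK}(Y)^{\rm sep}$ (Theorem~\ref{expo2}) and the integral/\'etale statement via symbols, neither of which falls out of your route.

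The gap in your argument is precisely your steps (ii)--(iii), which you acknowledge as the main obstacle but treat as ``a completion computation''. Step (iii) --- that the Kummer map $\kappa$ is injective with closed image and identifies $\Q_p\wotimes\O(Y)^{\dual\dual}$ with the extension of the closure of $0$ in $(\bst^+\wotimes V)^{N=0,\varphi=p}$ by $\O(Y_C)/C$, compatibly with all the topologies --- is exactly what the paper attributes to \cite[Prop.\,5.32]{CDN2} and explicitly calls ``the most delicate result of that paper''. It requires the explicit understanding of syntomic cohomology of a short, not just the logarithm/completion bookkeeping you invoke. Similarly, step (ii), the exactness of $(\bst^+\wotimes_{\breve C}-)^{N=0,\varphi=p}$ applied to $0\to V^0\to V\to V^{\rm sep}\to 0$ where $V^0$ is a pro-finite-dimensional non-Hausdorff kernel, is not handled by a clean ``$H^1$-vanishing over the Fargues--Fontaine curve'' lemma in the literature at the generality you need; the paper sidesteps this by never decoupling $V$ into $V^0$ and $V^{\rm sep}$ abstractly, and instead works with the explicit de Rham/syntomic complex of the semistable model where the topology is visible and strictness can be checked by hand. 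So your strategy is a coherent plan but does not constitute a proof: the two places you would need new input are exactly the two places the paper does nontrivial work, and a careful reader would expect you to either supply that work or cite it.

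Two smaller remarks. First, your step (i) (the HK-realization of $\kappa(f)$ landing in $V^0$ after the comparison isomorphism) is morally right but needs the compatibility of $\kappa$ with $\alpha_{\rm FM}$ in degree~$1$, which is not automatic from Theorem~\ref{kol11} as stated and is again something the paper gets for free by \emph{defining} the relevant maps through regulators. Second, the ``closed image'' clause in the theorem is part of the assertion, not an afterthought, and it is precisely the kind of strictness statement for which the explicit model of the paper is designed; your proposal never addresses it beyond asserting the ends are Banach.
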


\begin{remark}\label{hk1}
The group $H^1_{\rm HK}(Y)^{\rm sep}$ has a concrete combinatorial description (see Th.\,\ref{expo2}),
 starting from
a triangulation of $Y$ (or, what amounts to the same, a semi-stable model).
It is easy to obtain analogous   results for Stein curves or overconvergent affinoids from the above result
and this combinatorial description~\cite[Th.\,0.5, Th.\,0.7]{CDN2}.
\end{remark}

\begin{remark}\phantomsection\label{basic42}
If $Y$ is an affinoid of dimension~$d$, and if $r\leq d$,
the above result suggests that one could have an exact sequence
$$0\to \Q_p\wotimes K^r_{\cal M}(Y)^{++}\to H^r_{\proeet}(Y,\Q_p(r))
\to (\bst^+\otimes_{\breve{C}} H^r_{\rm HK}(Y)^{\rm sep})^{N=0,\varphi=p^r}\to 0$$
where $K^r_{\cal M}(Y)^{++}$ is the subgroup of the Milnor
$K$-theory group $K^r_{\cal M}(\O(Y))$ generated by symbols
$(f_1,\dots,f_r)$, with  $f_i\in\O(Y)^{\dual\dual}$ (this restriction
makes
Steinberg relation disappear since you cannot have 
$x$ and $1-x$ both belonging to $\O(Y)^{\dual\dual}$).
\end{remark}

The next two sections are devoted to the ingredients that come into play
for the proof of Theorem~\ref{intro1.1}. The general philosophy is inspired by
the proof of Theorem~\ref{basic1}, but working with curves allows some notable simplifications
(in particular, the construction of the Hyodo-Kato isomorphism becomes straightforward).

\subsection{Chopping a curve into shorts and legs}\label{intro6.1}
Let $Y$ be a quasi-compact curve over $C$. Choose a triangulation $S$ of $Y$ and
denote by $Y_S$ the associated semistable model of $Y$.
We can (and will) choose $S$ fine enough so that the irreducible components
of the special fiber $Y_S^{\rm sp}$ are smooth and intersect in at most one point.

We see $Y_S^{\rm sp}$ 
as a {\it proper curve over $k_C$ endowed with a set $A$ of marked points}
$a=(P_a,\mu(a))$, with $A=A_c\sqcup (A\moins A_c)$, and:

$\bullet$ $A_c$ is the set of singular points
and $\mu(a)\in\Q_{>0}$ if $a\in A_c$;

$\bullet$ $A\moins A_c$ is the set of points that one has to remove to get the
usual special fiber and $\mu(a)=0^+$ if $a\in A\moins A_c$.

The irreducible components of $Y_S^{\rm sp}$ are smooth and proper and in bijection with $S$
(we denote by
$ Y^{\rm sp}_s$ the component
corresponding to $s\in S$).

\vskip2mm
From this data, one builds {\it the adoc graph}
$\Gamma^{\rm ad}(Y_S)$ of $Y_S$, whose vertices are $S$, edges $A$, each edge $a$ being of length
$\mu(a)$ with end points the $s\in S$ such that $P_a\in Y^{\rm sp}_s$
(hence $a\in A_c$ has two end points whereas $a\in A\moins A_c$ has only one).
The graph $\Gamma^{\rm ad}(Y_S)$ is thus the dual graph of the classical
special fiber with added edges of length $0^+$ originating from vertices corresponding
to nonproper irreducible components, one such edge for each missing point.

If $s\in S$, the tube $Y_s$ of $Y^{\rm sp}_s$ with marked points removed is
{\it a shorts} (i.e.~(the formal model of) an affinoid
with good reduction); if $a\in A_c$, the tube $Y_a$ of $P_a$ is {\it a leg} (i.e., an open annulus)
of length $\mu(a)$ (one has $\O(Y_a)=\O_C[[T_{a,s_1},T_{a,s_2}]]/(T_{a,s_1}T_{a,s_2}-p^{\mu(a)})$,
if $s_1,s_2$ are the end points of~$a$).

The $Y_i$, for $i\in I=S\sqcup A_c$,
form a partition of $Y$. To reconstruct $Y$, one needs an additional gluing data
for $Y_s$ and $Y_a$ if $(a,s)\in I_{2,c}$,  where
$I_{2,c}$ is the set of $(a,s)$ with $a\in A_c$
and $s$ an end point of $a$ (such a gluing does not make sense
in rigid or adic geometry, but it does in {\it adoc}\footnote{An interpolation between ``ad hoc''
and ``adic''.} {\it geometry}, see~\cite[chap.\,3]{CDN2}):
$P_a$ determines a rank $2$ valuation on $\O(Y_s)$, hence 
{\it a ghost\footnote{It has no point in characteristic $0$, hence the name; on the other hand, its
reduction is the punctured open ball in characteristic $p$.} circle} $Y_{s,a}$ (choosing a local parameter gives an 
isomorphism $\O(Y_{s,a})\simeq \O_C[[T_{s,a},T_{s,a}^{-1}\rangle$,
completion of $\O_C[[T_{s,a}]][T_{s,a}^{-1}]$ for the $p$-adic topology),
and $Y_a$ contains a corresponding ghost circle $Y_{a,s}$; the gluing data is then
an isomorphism $\iota_{a,s}:Y_{a,s}\simeq Y_{s,a}$, i.e.~an isomorphism
$\O_C[[T_{s,a},T_{s,a}^{-1}\rangle\simeq \O_C[[T_{a,s},T_{a,s}^{-1}\rangle$.

The above set of data
(i.e.~$\Gamma= (S,A, A_c,\mu)$, $(Y_i)_{i\in I}$, $(\iota_{i,j})_{(i,j)\in I_{2,c}}$),
is {\it a pattern of curve}, see~\cite[\S\S\,3.5,\,3.6]{CDN2}.  The above discussion explains
how to associate a pattern of curve to a triangulation, and how to reconstruct
$Y$ from this pattern; we refer to the figure below for a visual illustration of this procedure.
\begin{center}
\begin{tikzpicture}
[scale=.8]
\draw[very thick] (0,4) arc (50: 310 : 1.4 and 3);
\draw[very thick] (5,4) arc (130: -130 : 1.4 and 3);

\draw[very thick]  (0,.2) ..controls +(1,-.1) and +(-1,-.1).. (5,.2);
\draw[very thick]  (0,-.6) ..controls +(1,.1) and +(-1,.1).. (5,-.6);
\draw[very thick, dotted, red] (0,-.6) arc (-90: 90 : .2 and .4); \draw[very thick, dashed, red] (0,.2) arc (90: 270 : .2 and .4);
\draw[very thick, dotted, red] (5,-.6) arc (-90: 90 : .2 and .4); \draw[very thick, dashed, red] (5,.2) arc (90: 270 : .2 and .4);

\draw[very thick]  (0,4) ..controls +(1,-.1) and +(-1,-.1).. (1.8,4);
\draw[very thick]  (0,3.2) ..controls +(1,.1) and +(-1,.1).. (1.8,3.2);
\draw[very thick, dotted, red] (0,3.2) arc (-90: 90 : .2 and .4); \draw[very thick, dashed, red] (0,4) arc (90: 270 : .2 and .4);
\draw[very thick, dotted, red] (1.8,3.2) arc (-90: 90 : .2 and .4); \draw[very thick, dashed, red] (1.8,4) arc (90: 270 : .2 and .4);
\draw[very thick, dotted, red] (3.2,3.2) arc (-90: 90 : .2 and .4); \draw[very thick, dashed, red] (3.2,4) arc (90: 270 : .2 and .4);
\draw[very thick, dotted, red] (5,3.2) arc (-90: 90 : .2 and .4); \draw[very thick, dashed, red] (5,4) arc (90: 270 : .2 and .4);
\draw[very thick]  (5,4) ..controls +(-1,-.1) and +(1,-.1).. (3.2,4);
\draw[very thick]  (5,3.2) ..controls +(-1,.1) and +(1,.1).. (3.2,3.2);

\draw[very thick] (1.8,3.2) arc (-150:-30:.8); \draw[very thick] (1.8,4) arc (150:30:.8);

\draw[very thick]  (5,.2) ..controls +(.5,0) and +(.5,0).. (5,3.2);
\draw[very thick]  (0,.2) ..controls +(-.5,0) and +(-.5,0).. (0,3.2);

\draw[thick, blue] (-1,-.85) circle(.25);
\draw[thick, blue] (6,-.85) circle(.25);
\draw[blue] (-1,-3) -- (-1,-.7);
\draw[blue] (-1,-1.5) -- (-.9,-.9);
\draw[ball color=blue] (9.5,-.85) circle (.12);
\draw[ball color=blue] (11.5,-.85) circle (.12);
\draw[blue](6,-.6)--(9.5,-.85);
\draw[blue](6,-1.1)--(9.5,-.85);

\draw[thick, blue] (-1,1.75) circle(.25);
\draw[ball color=blue] (8.85,1.75) circle (.12);
\draw[blue](-1,2)--(8.85,1.75);
\draw[blue](-1,1.5)--(8.85,1.75);

\draw[very thick]  (-1.5,-.2) ..controls +(0.15,-.35) and +(-0.15,-.35).. (-.5,-.2);
\draw[very thick]  (-1.375,-.325) ..controls +(.1,.15) and +(-.1,.15).. (-.625,-.325);

\draw[very thick]  (-1.7,1) ..controls +(0.15,-.35) and +(-0.15,-.35).. (-.7,1);
\draw[very thick]  (-1.575,.875) ..controls +(.1,.15) and +(-.1,.15).. (-.825,.875);
\draw[very thick]  (-1.7,2.5) ..controls +(0.15,-.35) and +(-0.15,-.35).. (-.7,2.5);
\draw[very thick]  (-1.575,2.375) ..controls +(.1,.15) and +(-.1,.15).. (-.825,2.375);

\draw[very thick]  (-1.5,3.8) ..controls +(0.15,-.35) and +(-0.15,-.35).. (-.5,3.8);
\draw[very thick]  (-1.375,3.675) ..controls +(.1,.15) and +(-.1,.15).. (-.625,3.675);

\draw[very thick]  (5.5,3.8) ..controls +(0.15,-.35) and +(-0.15,-.35).. (6.5,3.8);
\draw[very thick]  (5.625,3.675) ..controls +(.1,.15) and +(-.1,.15).. (6.375,3.675);

\draw[very thick]  (5.7,1.6) ..controls +(0.15,-.35) and +(-0.15,-.35).. (6.7,1.6);
\draw[very thick]  (5.825,1.475) ..controls +(.1,.15) and +(-.1,.15).. (6.575,1.475);

\draw[very thick]  (5.5,-.2) ..controls +(0.15,-.35) and +(-0.15,-.35).. (6.5,-.2);
\draw[very thick]  (5.625,-.325) ..controls +(.1,.15) and +(-.1,.15).. (6.375,-.325);

\draw[very thick, dotted, color=red, fill=gray!20] (6.7,2.8) circle (.27 and .3);
\draw[thick, dotted, red] (6.7,3.1) -- (12.5,2.6);
\draw[thick, dotted, red] (6.7,2.5) -- (12.5,2.6);
\draw[very thick, dotted, color=red, fill=gray!20] (6.7,.8) circle (.27 and .3);

\draw(2.5,0.45) node{$Y_{a_3}$};
\draw(1,4.3) node{$Y_{a_1}$};
\draw(4,4.3) node{$Y_{a_2}$};
\draw(2.5,4.7) node{$Y_{s_2}$};
\draw(-1.9,4.5) node{$Y_{s_1}$};
\draw(6.9,4.5) node{$Y_{s_3}$};

\draw(2.5,-3.4) node{$\Gamma^{\rm ad}(Y)$};
\draw[ball color=black] (-1,-3) circle (.2);
\draw[ball color=black] (6,-3) circle (.2);
\draw[ball color=black] (2.5,-2) circle (.2);
\draw[very thick] (-1,-3)--(6,-3);
\draw[very thick] (-1,-3)--(2.5,-2);
\draw[very thick] (2.5,-2)--(6,-3);
\draw (.75,-2.2) node{$a_1$};
\draw (4.25,-2.2) node{$a_2$};
\draw (2.5,-2.75) node{$a_3$};
\draw (6.2,-2.2) node{$a_4$};
\draw (6.9,-2.7) node{$a_5$};
\draw (-1,-3.4) node{$s_1$};
\draw (2.5,-1.6) node{$s_2$};
\draw (6,-3.4) node{$s_3$};
\draw[ball color=blue] (3.5,-3) circle (.12);
\draw[very thick, dashed, blue] (3.5,-.55) arc (-90: 90 : .2 and .35); \draw[very thick, blue] (3.5,.15) arc (90: 270 : .2 and .35);
\draw[blue] (3.5,-3) -- (3.35,0);
\draw[thick,color=red,{->}] (6,-3) -- +(80:.7); \draw[thick,color=red,{->}] (6,-3) -- +(20:.7);
\draw[thick,color=red,{->}] (6,-3) -- (5.5,-2.86); \draw[thick,color=red,{->}] (6,-3) -- (5.5,-3);
\draw[thick,color=red,{->}] (-1,-3) -- (-.5,-2.86); \draw[thick,color=red,{->}] (-1,-3) -- (-.5,-3);
\draw[thick,color=red,{->}] (2.5,-2) -- (2,-2.14); \draw[thick,color=red,{->}] (2.5,-2) -- (3,-2.14);

\draw[very thick]  (8,4.5) ..controls +(.5,-3) and +(-2,.5).. (12,-1);
\draw[very thick]  (13,4.5) ..controls +(-.5,-3) and +(2,.5).. (9,-1);
\draw[very thick]  (7.5,3.5) -- (13.5,3.5);
\draw[ball color=red] (10.5,-.24) circle (.12);
\draw (11.6,-.24) node{$(P_{a_3},\mu_3)$};
\draw[ball color=red] (8.2,3.5) circle (.12);
\draw (9.2,3.2) node{$(P_{a_1},\mu_1)$};
\draw[ball color=red] (12.8,3.5) circle (.12);
\draw (13.7,3.2) node{$(P_{a_2},\mu_2)$};
\draw[ball color=red] (12.5,2.6) circle (.12);
\draw (13.5,2.4) node{$(P_{a_4},0^+)$};
\draw[ball color=red] (11.55,.8) circle (.12);
\draw (12.5,.6) node{$(P_{a_5},0^+)$};
\draw(10.5,3.9) node{$Y_{s_2}^{\rm sp}$};
\draw(8.45,4.3) node{$Y_{s_1}^{\rm sp}$};
\draw(12.55,4.3) node{$Y_{s_3}^{\rm sp}$};
\draw(10.5,-1.2) node{$Y^{\rm sp}$};
\draw (5.5,-4.4) node{An affinoid $Y$ built from 3 shorts and 3 legs};

\end{tikzpicture}
\end{center}
{\Small
The above affinoid $Y$ is obtained by gluing, along ghost circles
(red dots), shorts $Y_{s_1}$, $Y_{s_2}$, $Y_{s_3}$
of respective genus $4$, $0$ et $3$, and legs $Y_{a_1}$, $Y_{a_2}$, $Y_{a_3}$ of respective length
 $\mu_1$, $\mu_2$, $\mu_3$.
The special fiber $Y^{\rm sp}$ 
has five marked points: three singular corresponding to the specializations of the legs
and two to the tubes removed from $Y_{s_3}$.  The skeleton $\Gamma^{\rm an}(Y)$ is the
triangle with vertices $s_1$, $s_2$ et $s_3$ (the length of edges $a_1$, $a_2$ and $a_3$
are $\mu_1$, $\mu_2$ and  $\mu_3$) and $\Gamma^{\rm ad}(Y)$ is obtained by adding
the arrows $a_4$ and  $a_5$ (of length $0^+$).


}

\vskip2mm
Let $S_{\rm int}\subset S$, be the set of $s$ such that $Y_s^{\rm sp}$ has no marked point
$(a,\mu(a))$ with $\mu(a)=0^+$, and let $\Gamma_{\rm int}$ be the full subgraph of
$\Gamma^{\rm ad}(Y_S)$ with set of vertices $S_{\rm int}$ (this is {\it the interior}
of $\Gamma^{\rm ad}(Y_S)$).
\begin{theorem}\phantomsection\label{expo2}
%
{\rm (\cite[Th.\,0.13]{CDN2})}.
$H^1_{\rm HK}(Y)^{\rm sep}$ admits a natural filtration whose
successive quotients are\footnote{\label{mono}
If $\Gamma$ is a graph, we let $H^\bullet(\Gamma,\Lambda)$
and $H^\bullet_c(\Gamma,\Lambda)$ denote its cohomology or cohomology with compact
support.  These groups have a natural combinatorial description in terms of
operators $\partial$ and $\partial^\dual$ for which we refer to~\cite[\S\,1.2]{CDN2}.
If $\Gamma$ is metrized (as is the case of $\Gamma^{\rm ad}(Y_S)$), this induces
$N:H^1_c(\Gamma,\Lambda)^\dual\to H^1(\Gamma,\Lambda)$ involving the length of the edges, 
see~\cite[1.2.4]{CDN2}.}
$$H^1_{\rm HK}(Y)^{\rm sep}=\big[
\xymatrix@C=.3cm{
H^1(\Gamma_{\rm int},\breve C)\ar@{-}[r]&\prod\limits_{s\in S_{\rm int}}\hskip-.1cm H^1_{\rm rig}( Y^{\rm sp}_s) 
\oplus\prod\limits_{s\in \partial Y}\hskip-.1cm H^1_{\rm rig}( Y^{\rm sp}_s)^{[1]}
\ar@{-}[r]& H_c^1(\Gamma,\breve C)^\dual(-1)}\big],$$
where the exponent $[1]$ denotes the slope~$1$ subspace {\rm (for $\varphi$)},
and the $(-1)$ twists means that the natural action of $\varphi$ is multiplied by $p$.
\end{theorem}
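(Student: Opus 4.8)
\emph{Sketch of proof.} The filtration to be produced is the monodromy--weight filtration on $H^1_{\rm HK}(Y)$, and the plan is to compute $\R\Gamma_{\rm HK}(Y)$ through the decomposition of $Y$ into shorts and legs given by the triangulation $S$, and then to read off the weight-graded pieces from the combinatorics of $\Gamma^{\rm ad}(Y_S)$ together with the rigid cohomology of the irreducible components of the special fiber. Since $Y$ is quasi-compact and $Y_S$ is a semistable model, $\R\Gamma_{\rm HK}(Y)$ is the classical Hyodo--Kato (log-crystalline) cohomology of the log special fiber of $Y_S$ (see \cite{HK}), which satisfies Zariski descent; applying this to the admissible cover of $Y$ by the shorts $Y_s$ ($s\in S$) and the legs $Y_a$ ($a\in A_c$), whose pairwise intersections are the ghost circles $Y_{s,a}$, for $(a,s)\in I_{2,c}$, and which has no nonempty triple intersection, I would obtain a Čech spectral sequence
\[
E_1^{p,q}=\bigoplus_{\#T=p+1}H^q_{\rm HK}(Y_T)\ \Longrightarrow\ H^{p+q}_{\rm HK}(Y),\qquad p\in\{0,1\},
\]
compatible with $\varphi$ and $N$.

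Next I would record the local inputs. A leg $Y_a$ ($a\in A_c$), an open annulus with the semistable log structure $T_{a,s_1}T_{a,s_2}=p^{\mu(a)}$, has $H^\bullet_{\rm HK}(Y_a)$ concentrated in degrees $0$ and $1$ with $H^0=\breve C$ and $H^1_{\rm HK}(Y_a)$ one-dimensional over $\breve C$ and pure of slope $1$ (a Tate twist $\breve C(-1)$), the length $\mu(a)$ entering only through the restriction maps to the two adjacent ghost circles; a ghost circle $Y_{s,a}$ likewise has $H^0=\breve C$ and $H^1_{\rm HK}\cong\breve C(-1)$; and a short $Y_s$ is an affinoid with good reduction whose special fiber is the smooth curve obtained from the proper component $Y_s^{\rm sp}$ by deleting the marked points on it, so that, by the Hyodo--Kato comparison and comparison with rigid cohomology, $H^\bullet_{\rm HK}(Y_s)^{\rm sep}$ is the rigid cohomology of that open curve, with $N=0$ and $\varphi$ the crystalline Frobenius. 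Feeding this into the spectral sequence and passing to the separated quotient --- equivalently, replacing each piece by its overconvergent avatar, so that these local cohomologies become the finite-dimensional rigid ones --- the $\varphi$-slope decomposition of the $E_1$-terms should organize $H^1_{\rm HK}(Y)^{\rm sep}$ into a three-step weight filtration: the weight-$0$ piece is the Čech $H^1$ of the constant coefficients, namely $H^1(\Gamma_{\rm int},\breve C)$; the weight-$1$ piece assembles the pure parts $H^1_{\rm rig}(Y_s^{\rm sp})$ over interior vertices and their slope-$1$ subspaces over boundary vertices; and the weight-$2$ piece, built from the $\breve C(-1)$'s coming from the legs, ghost circles and the residues at the $0^+$-legs, is $H^1_c(\Gamma,\breve C)^\dual(-1)$, on which $\varphi$ is multiplied by $p$ and with $N$ inducing the monodromy pairing $H^1_c(\Gamma,\breve C)^\dual(-1)\to H^1(\Gamma_{\rm int},\breve C)$ weighted by the edge lengths $\mu$.

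The hard part will be the analysis at the boundary of $Y$, i.e.\ at the $0^+$-legs and the components $s\in\partial Y$ that carry them. One has to prove exactly why such a component contributes in weight $1$ only its slope-$1$ subspace $H^1_{\rm rig}(Y_s^{\rm sp})^{[1]}$ while the residue classes at its punctures land, with a Tate twist, in $H^1_c(\Gamma,\breve C)^\dual(-1)$; this asymmetry --- a puncture carrying a weight-$2$ class but no weight-$0$ class --- is what forces the \emph{interior} graph $\Gamma_{\rm int}$ to appear in weight $0$ against the \emph{compactly supported} cohomology of the full graph $\Gamma$ in weight $2$, and I expect the cone computations around the ghost circles attached to $0^+$-legs to be where all the care is needed. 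One then has to check that the resulting filtration is functorial and independent of the triangulation, that it is compatible with $\varphi$ and $N$, and that forming the separated quotient commutes with the finite limits and colimits entering the Čech complex. As a sanity check, when $Y$ is proper there are no $0^+$-legs, $\Gamma_{\rm int}=\Gamma^{\rm ad}(Y_S)$ and $H^1_c(\Gamma,\breve C)=H^1(\Gamma,\breve C)$, and the statement collapses to the classical Hyodo--Kato weight spectral sequence for semistable curves.
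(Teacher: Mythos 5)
Your plan matches the framework the paper sets up: in Section~3.3.4 the survey \emph{defines} $H^1_{\rm HK}(Y)$ directly as the cohomology of $C_{\rm dR}(\breve Y_S):=\breve C\otimes_{\acris}C_{\rm dR}(\widetilde Y_S)$, and the complex $C_{\rm dR}(\widetilde Y_S)$ displayed in Section~3.3.3 is precisely the two-term Mayer--Vietoris-type complex built from the shorts $\widetilde Y_s$, legs $\widetilde Y_a$, and ghost circles $\widetilde Y_{a,s}$ that your ``$E_1$-page'' is secretly encoding. So the decomposition you reach for is the right one and the local inputs (slope-$1$, $1$-dimensional $H^1$ for a leg and for a ghost circle, open-curve rigid cohomology for a short, with its own weight filtration separating $H^1_{\rm rig}(Y_s^{\rm sp})$ from the residues) are the ones the computation uses.

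Two corrections and one warning. First, the geometric framing is slightly off: the $Y_i=Y_s,Y_a$ do not form an admissible cover of $Y$ in rigid or adic geometry --- they form a disjoint \emph{partition}, and the ``pairwise intersections'' you want to plug into a \v{C}ech spectral sequence are the ghost circles $Y_{s,a}\cong Y_{a,s}$, which are not open subsets of the $Y_i$'s but rank-$2$-valuation boundary circles glued via the $\iota_{a,s}$. This gluing only makes sense in the \emph{adoc} category the paper introduces precisely for this purpose; invoking ``Zariski descent for the log special fiber'' papers over the fact that the complex you actually need is the explicitly assembled $C_{\rm dR}(\widetilde Y_S)$, not the output of a black-box descent theorem. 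Second, and this is where the real content is: your sketch correctly isolates the boundary analysis at the $0^+$-legs as ``the hard part,'' but this is not a step to be deferred --- it \emph{is} the theorem. The asymmetry you flag (a puncture at a boundary component $s\in\partial Y$ contributing the slope-$1$ subspace $H^1_{\rm rig}(Y_s^{\rm sp})^{[1]}$ in weight $1$ and the residue class in weight $2$, while contributing nothing in weight $0$, which is why $\Gamma_{\rm int}$ appears in weight $0$ against $H^1_c(\Gamma,\breve C)^\dual(-1)$ in weight $2$) is exactly what the proof in \cite[\S\S\,5--6]{CDN2} establishes by a careful examination of the edge maps from the shorts to the ghost circles at the $0^+$-edges, together with the passage to the separated quotient. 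Without carrying that out, what you have is a correct and well-organized statement of the plan, not a proof.

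As a smaller point, when you say ``$H^\bullet_{\rm HK}(Y_s)^{\rm sep}$ is the rigid cohomology of that open curve,'' be aware that the displayed theorem puts $H^1_{\rm rig}(Y_s^{\rm sp})$ --- the \emph{proper} component --- in the weight-$1$ graded piece, not the open curve; the difference is accounted for by the residue exact sequence, which is part of the boundary bookkeeping you still need to do.
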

\begin{remark}\label{hk2}
The operator $N$ of footnote~\ref{mono} induces $N:H_c^1(\Gamma,\breve C)^\dual
\to H^1(\Gamma,\breve C)\to H^1(\Gamma_{\rm int},\breve C)$ (the second map being
restriction), and hence a
``monodromy operator'' $N:H^1_{\rm HK}(Y)^{\rm sep}\to H^1_{\rm HK}(Y)^{\rm sep}$.
\end{remark}

\subsection{Proofs}
Choose a fine enough triangulation $S$ of $Y$, let $Y_S$ be the
associated semistable model, and let 
$(\Gamma,(Y_i)_{i\in I},(\iota_{i,j})_{(i,j)\in I_{2,c}})$ be
the associated pattern.
The proof of Th.\,\ref{intro1.1} relies on:

$\bullet$ the definition of a group of symbols
${\rm Symb}_p(Y)$, 

$\bullet$ the construction of \'etale and syntomic regulators inducing isomorphisms:
$$H^1_{\eet}(Y,\Z_p(1))\overset{\sim}\leftarrow {\rm Symb}_p(Y)\overset{\sim}{\rightarrow}
H^1_{\rm syn}(Y_S,\Z_p(1)),$$

$\bullet$
a description of $H^1_{\rm syn}(Y_S,\Z_p(1))$ in terms of the de Rham cohomology and its variations.
Only this part introduces denominators (these depend on the length of the legs: the smaller
the legs, the bigger the denominators).
\subsubsection{Symbols}
Let $Y$ be a quasi-compact curve over $C$, let $C(Y)$ be the field of meromorphic functions on $Y$, 
and let ${\rm Div}(Y)$ be the group of finite sums $\sum_{x\in Y(C)}n_xx$, with $n_x\in \Z$
for all $x$.

One defines {\it the symbol group} ${\rm Symb}_p(Y)$
to be
$${\rm Symb}_p(Y)=
\frac{\{(f_n)_{n\in\N},\ f_n\in C(Y)^\dual,\ {\rm Div}(f_n)\in p^n{\rm Div}(Y),
\ f_{n+1}/f_n\in(C(Y)^\dual)^{p^n}\}}{\{(h_n^{p^n}),\ h_n\in C(Y)^\dual\}}.$$
We have an exact sequence
$$0\to H^1(\Gamma,\Z_p(1))\to 
{\rm Symb}_p(Y)\to {\rm Ker}\big(\prod_{i\in I}{\rm Symb}_p(Y^{\rm gen}_i)\to 
\prod_{(i,j)\in I_{2,c}}{\rm Symb}_p(Y^{\rm gen}_{i,j})\big),$$
where ``gen'' stand for ``generic fiber'' and
${\rm Symb}_p(Y^{\rm gen}_i)$ is defined as above.

\subsubsection{\'Etale regulators}
Kummer theory gives an \'etale regulator
${\rm Symb}_p(Y)\to H^1_{\eet}(Y,\Z_p(1))$ which can be proven to be an isomorphisme
thanks to Kummer exact sequence
$$0\to (\Z/p^n)\otimes\O(Y)^\dual\to H^1_{\eet}(Y,\Z/p^n(1))\to {\rm Pic}(Y)[p^n]\to 0$$
Via the above exact sequence, this reduces the computation of
$H^1_{\eet}(Y,\Z_p(1))$ to that of the
${\rm Symb}_p(Y^{\rm gen}_i)$'s.  


\subsubsection{Syntomic regulators}
Let
$C_{\rm dR}(\widetilde Y_S)$ be the complex computing
the absolute log-crystalline cohomology $H^\bullet_{\rm dR}(\widetilde Y_S)$ of $Y_S$; i.e.,
$$C_{\rm dR}(\widetilde Y_S):=\xymatrix@C=.6cm{\prod_{i\in I}\O(\widetilde Y_i)
\ar[r]& \prod_{i\in I}\Omega^1(\widetilde Y_i)
\oplus\prod_{(i,j)\in I_{2,c}}\O(\widetilde Y_{i,j})\ar[r]&
\prod_{(i,j)\in I_{2,c}}\Omega^1(\widetilde Y_{i,j})_{d=0}}$$
where the terms of the complex are defined as follows:


$\bullet$ A shorts
$Y_s$ is obtained by extension of scalars from a formal scheme
$\breve Y_s$ smooth over $\O_{\breve C}$, and we set
$\O(\widetilde Y_s)=\acris\wotimes_{\O_{\breve C}}\O(\breve Y_s)$ (and we choose
a Frobenius $\varphi$
on $\O(\breve Y_s)$).

$\bullet$ A leg
$Y_a$ is of the form $\O(Y_a)=\O_C[[T_1,T_2]]/(T_1T_2-p^r)$, 
where $r$ is the length of $Y_a$. We chose a morphism of semi-groups $r\mapsto \tilde p^r$,
from $\Q_+$ to $\acris$, where $\tilde p^r$
is a Teichm\"uller with $\theta(\tilde p^r)=p^r$,
and we set
$\O(\widetilde Y_a)=\acris[[T_1,T_2]]/(T_1T_2-\tilde p^r)$
(and we take $\varphi$ defined by $\varphi(T_i)=T_i^p$, for $i=1,2$).

$\bullet$ If $(a,s)\in I_{2,c}$, we define $\O(\widetilde Y_{a,s})$ to be the
$p$-adic completion of the divided power envelop of $\O(\widetilde Y_a)\wotimes \O(\widetilde Y_s)$
with respect to the kernel of the natural map to $\O(Y_{a,s})$, endowed with the
tensor product $\varphi$.

\vskip1mm
We define the {\it syntomic cohomology groups} $H^i_{\rm Syn}(Y_S,\Z_p(1))$ of $Y_S$
as those of the complex 
$${\rm Syn}(Y_S,\Z_p(1)):=
[\xymatrix{F^1C_{\rm dR}(\widetilde Y_S)\ar[r]^-{1-\varphi/p}&C_{\rm dR}(\widetilde Y_S)}]$$

\vskip.1cm
One defines a {\it syntomic regulator} ${\rm Symb}_p(Y)\to H^1_{\rm syn}(Y_S,\Z_p(1))$
by sending $(f_n)_{n\in\N}$ to the cocycle
$\lim_{n\to\infty}\big(\frac{d\tilde f_{n,i}}{\tilde f_{n,i}},\frac{1}{p}\log\frac{\varphi(\tilde f_{n,i})}
{\tilde f_{n,i}^p},\log(\frac{\tilde f_{n,i}}{\tilde f_{n,j}})\big)$,
where $\tilde f_{n,i}\in\O(\widetilde Y_i)$ is a lifting of the restriction
of $f_n$ to $Y_i$.
This regulator is also ``almost''
an isomorphism (on the nose if $p\neq 2$, with a cokernel killed by $2$ if $p=2$).
This is proven by gluing the corresponding isomorphisms for the $Y_i$'s, the case of shorts
\cite[Chap.\,5]{CDN2}
being much more delicate than that of legs~\cite[4.2.4]{CDN2}.

\subsubsection{Syntomic cohomology and Hyodo-Kato cohomology}
The de Rham cohomology groups $H^i_{\rm dR}(Y)$ of $Y$ are those of
$C_{\rm dR}(Y)=C\otimes_{\acris}C_{\rm dR}(\widetilde Y_S)$
whereas its Hyodo-Kato cohomology groups
$H^i_{\rm HK}(Y)$ are, by definition,
those of
$C_{\rm dR}(\breve Y_S):=\breve C\otimes_{\acris}C_{\rm dR}(\widetilde Y_S)$
(the extensions of scalars are through 
the surjective morphisms $\theta:\acris\to\O_C$ et $\theta_0:\acris\to\O_{\breve C}$).

   One can modify slightly (see~\cite[6.3.5]{CDN2})
$C_{\rm dR}(\widetilde Y_S)$ to obtain a quasi-isomorphic complex
$\overline C_{\rm dR}(\widetilde Y_S)$ such that
$\O_{\breve C}\hookrightarrow\acris$
induces a 
morphism of complexes
$\overline C_{\rm dR}(\breve Y_S)\to \Q_p\otimes\overline C_{\rm dR}(\widetilde Y_S)$,
which commutes with $\varphi$, and is a section of
$\Q_p\otimes\overline C_{\rm dR}(\widetilde Y_S)
\to \overline C_{\rm dR}(\breve Y_S)$. This gives (almost for free) isomorphisms
$$\Q_p\otimes H^1_{\rm dR}(\widetilde Y_S)\simeq \bcris^+\wotimes_{\breve C}H^1_{\rm HK}(Y)
\quad{\rm and}\quad
\iota_{\rm HK}:H^1_{\rm dR}(Y)\simeq C\wotimes_{\breve C}H^1_{\rm HK}(Y).$$
Now, playing with the different presentations of the mapping fiber
defining ${\rm Syn}(Y_S,\Z_p(1))$ and using the vanishing of $H^1(Y,\O)$, one
gets an exact sequence:
%
$$0\to \O(Y)/C\to \Q_p\otimes H^1_{\rm syn}(Y_S,\Z_p(1))
\to (\bcris^+\wotimes_{\breve C}H^1_{\rm HK}(Y))^{\varphi=p}\to 0$$
which is not independent of the choice of $r\mapsto\tilde p^r$.
To go from $H^1_{\rm HK}(Y)$ to $H^1_{\rm HK}(Y)^{\rm sep}$, one has to compare
$(\Q_p\wotimes\O(Y)^{\dual\dual})/\exp(\O(Y))$ and the closure of $0$
in $(\bcris^+\wotimes_{\breve C}H^1_{\rm HK}(Y))^{\varphi=p}$.
This comparison is already necessary to prove that the syntomic regulator
gives local isomorphisms
${\rm Symb}_p(Y_i^{\rm gen})\simeq H^1_{\rm syn}(Y_i,\Z_p(1))$, see~\cite[Prop.\,5.32]{CDN2} (the
most delicate result of that paper).

Finally, to
get something independent of the choice of $r\mapsto\tilde p^r$,
one has to 
use the Picard-Lefshetz formula
that describes the effect of changing
$r\mapsto \tilde p^r$ in terms of the monodromy operator $N$ on $H^1_{\rm HK}(Y)^{\rm sep}$ and
replace $\bcris^+\wotimes_{\breve C}H^1_{\rm HK}(Y)^{\rm sep}$
by $(\bst^+\wotimes_{\breve C}H^1_{\rm HK}(Y)^{\rm sep})^{N=0}$, see~\cite[6.4.4]{CDN2} for details.

\section{The ${\rm C}_{\rm st}$ conjecture}\label{nowy2} The basic comparison theorem (see Theorem \ref{basic1}) relates the derived $p$-adic pro-\'etale cohomology to the (derived) refined de Rham cohomology. For computations (see \cite{CDN1})  it turned out to be very useful to have also a nonderived version of this theorem. At the moment this exists only conjecturally in general but we do have  it in some very important special cases (like Stein varieties, for example). 
\subsection{Statement} Let $Y$ be a smooth, partially proper rigid analytic variety over $K$. Let $r\geq 0$. Recall that the Bloch-Kato 
 syntomic cohomology  fits into  the top distinguished triangle in the following diagram in $\sd(\Q_{p,\Box})$
$$\xymatrix@R=5mm@C=4mm{
\rg^{\rm BK}_{\rm syn}(Y_C,\Q_p(r))\ar[r]\ar[d] &[\rg_{\rm HK}(Y_C)\otimes^{\LL_{\Box}}_{C^{\nr}}\bst^+]^{N=0,\varphi=p^r}\ar[r]\ar[d]^-{\iota_{\hk}\otimes\iota}
&(\rg_{\rm dR}(Y)\otimes^{\LL_{\Box}}_K\bdr^+)/{\rm Fil}^r\ar@{=}[d]\\
{\rm Fil}^r(\R\Gamma_{\dr}(Y)\otimes^{\LL_{\Box}}_K\bdr^+)\ar[r] & \rg_{\rm dR}(Y)\otimes^{\LL_{\Box}}_K\bdr^+\ar[r] &(\rg_{\rm dR}(Y)\otimes^{\LL_{\Box}}_K\bdr^+)/{\rm Fil}^r
}$$
Clearly, the bottom row is also a distinguished triangle. 
The middle vertical map is induced by the Hyodo-Kato morphism $\iota_{\hk}$ and the
injection $\iota: \bst^+\hookrightarrow\bdr^+$.  For $n\geq r$, from the above diagram and the basic comparison theorem (see Theorem \ref{basic1}), we deduce a map 
of long exact sequences in $\sd(\Q_{p,\Box})$
$$\xymatrix@R=5mm@C=4mm{
\cdots\ar[r] & {\rm DR}^{r,n-1}\ar[r]\ar@{=}[d] &H^n_{\proeet}(Y_C,\Q_p(r))\ar[r]\ar[d]
& X^{r,n} \ar[r]\ar[d]^{\iota_{\hk}\otimes\iota} & {\rm DR}^{r,n}\ar[r]\ar@{=}[d] &\cdots\\
\cdots\ar[r] & {\rm DR}^{r,n-1}\ar[r] &H^n({\rm Fil}^r)\ar[r] 
& H^n_{\rm dR}(Y_K) \otimes_K^{{\Box}}\bdr^+\ar[r] & {\rm DR}^{r,n}\ar[r] &\cdots
}$$
Here (and below) we set ${\rm Fil}^r:={\rm Fil}^r(\R\Gamma_{\dr}(Y)\otimes^{\LL_{\Box}}_K\bdr^+)$. 

\begin{conjecture}\label{cst1}
 {\rm (The ${\rm C}_{\rm st}$-conjecture~\cite[Conj. 1.4, Conj. 1.6]{CN5})}

{\rm (i)} {\rm (de Rham to \'etale)}The middle square of the above diagram is bicartesian: i.e., we have an exact
sequence
$$0\to H^n_{\proeet}(X_C,\Q_p(r))\to H^n({\rm Fil}^r)\oplus X^{r,n}\to
H^n_{\rm dR}(X_K)\otimes^{\Box}_{K}\bdr^+\to 0$$

{\rm (ii)} {\rm (\'Etale to de Rham)}We can recover de Rham and Hyodo-Kato cohomologies from the pro-\'etale one\footnote{
Recall that $(\bdr^+[\tfrac{1}{t}])^{G_K}=K$ and $\varinjlim\nolimits_{[L:K]<\infty}
(\bst^+[\tfrac{1}{t}])^{G_K}=C^{\rm nr}$.}:
\begin{align*}
{\rm Hom}_{G_K}(H^n_{\proeet}(Y_C,\Q_p),\bdr^+[\tfrac{1}{t}])&\simeq H^n_{\rm dR}(Y_K)^\dual,\quad
{\text{as filtered $K$-modules,}}\\
\varinjlim\nolimits_{[L:K]<\infty}
{\rm Hom}_{G_L}(H^n_{\proeet}(Y_C,\Q_p),\bst^+[\tfrac{1}{t}])&\simeq H^n_{\rm HK}(Y_C)^\dual,\quad
{\text{as $(\varphi,N,G_K)$-modules over $C^{\rm nr}$.}}
\end{align*}
\end{conjecture}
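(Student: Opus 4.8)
The strategy would be to derive both parts from the basic comparison theorem (Theorem~\ref{basic1}). First I would upgrade the map of long exact sequences displayed before Conjecture~\ref{cst1} to a single distinguished triangle: in its left-hand square both horizontal cofibres are canonically $(\rg_{\dr}(Y)\otimes^{\LL_{\Box}}_K\bdr^+)/{\rm Fil}^r$ and the induced map between them is the identity, so the square is homotopy cartesian. Hence $\rg^{\rm BK}_{\synt}(Y_C,\Q_p(r))$ is the homotopy fibre product of ${\rm Fil}^r$ and $[\rg_{\hk}(Y_C)\otimes^{\LL_{\Box}}_{C^{\rm nr}}\bst^+]^{N=0,\varphi=p^r}$ over $\rg_{\dr}(Y)\otimes^{\LL_{\Box}}_K\bdr^+$, which yields a long exact sequence of cohomology groups. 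Using $\alpha_{\rm BK}$ from \eqref{BK1} to identify $H^n(\rg^{\rm BK}_{\synt})$ with $H^n_{\proeet}(Y_C,\Q_p(r))$ in the relevant range of degrees, this long exact sequence has precisely the terms occurring in Conjecture~\ref{cst1}(i). So part~(i) is equivalent to the vanishing of the connecting maps $\delta\colon H^{n-1}_{\dr}(Y_K)\otimes^{\Box}_K\bdr^+\to H^n_{\proeet}(Y_C,\Q_p(r))$, i.e.\ to the injectivity of $H^n_{\proeet}(Y_C,\Q_p(r))\hookrightarrow H^n({\rm Fil}^r)\oplus X^{r,n}$ with cokernel $H^n_{\dr}(Y_K)\otimes^{\Box}_K\bdr^+$.

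The core of part~(i) is therefore to kill these connecting maps. All of the groups in play are Banach-Colmez spaces --- this rests on $H^i_{\hk}(Y_C)$ being a projective limit of finite-dimensional $C^{\rm nr}$-modules with pro-smooth $G_K$-action, and on $\bst^+$, $\bdr^+$ and the relevant $\varphi$-eigenspaces and filtration quotients thereof being BC's --- and $\delta$ is a morphism of BC's, so the point is to force it to vanish. I would proceed case by case. If $Y$ is proper, $H^n_{\proeet}(Y_C,\Q_p(r))$ is finite-dimensional over $\Q_p$ (Scholze) and $X^{r,n}$ is finite-dimensional over $C^{\rm nr}$; since the maps in \eqref{basic2} are morphisms of BC's, this finite-dimensionality together with elementary properties of BC's forces $\delta=0$, and one recovers the short exact sequences of the first example in Section~\ref{nowy1} as an exact avatar of \eqref{fund1}. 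If $Y$ is Stein, or an overconvergent affinoid after imposing the dagger structure, then $H^i(Y_C,\so)=0$ for $i\geq 1$, and the explicit descriptions underlying the third example in Section~\ref{nowy1} and Theorem~\ref{intro1.1} exhibit the short exact sequence directly, the ``symbol'' term $(\Omega^{n-1}(Y_C)/\kker d)(r-n)$ being the kernel of $H^n_{\proeet}(Y_C,\Q_p(r))\to X^{r,n}$. In general, the vanishing of $\delta$ amounts to a degeneration statement: via Theorem~\ref{ht9} and the cohomology computations of Theorem~\ref{CGN21} it comes down to a suitable finiteness or vanishing for the higher Hyodo-Kato and de Rham cohomology of $Y$, which is exactly where partial properness enters, making $H^i_{\hk}(Y_C)$ behave like a subobject of de Rham cohomology.

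For part~(ii), granting~(i), I would apply $\Hom_{G_K}(-,\bdr^+[\tfrac1t])$, respectively $\varinjlim_{[L:K]<\infty}\Hom_{G_L}(-,\bst^+[\tfrac1t])$, to the short exact sequences of~(i) after untwisting --- the Tate twist can be absorbed into the period ring, which is harmless as $t$ is a unit there. These functors are exact on the relevant admissible category and annihilate the ``symbol''/$\Q_p$-parts of the sequences, whereas on the period-ring-tensored de Rham, resp.\ Hyodo-Kato, terms they reproduce $H^n_{\dr}(Y_K)^\dual$ with its filtration, resp.\ $H^n_{\hk}(Y_C)^\dual$ with its $(\varphi,N,G_K)$-structure; here one uses $(\bdr)^{G_K}=K$, $\varinjlim_{[L:K]<\infty}(\bst^+[\tfrac1t])^{G_L}=C^{\rm nr}$, and the admissibility and $(\varphi,N)$-regularity of these period rings. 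This is the dual, Banach-Colmez, incarnation of Remark~\ref{ht4}(iii).

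The main obstacle is the general case of the vanishing of $\delta$. Without finiteness (the proper case) or the explicit combinatorial descriptions (the Stein and overconvergent affinoid cases), there is no formal reason for the higher Hyodo-Kato and de Rham cohomology to be tame enough to force $\delta=0$; and the pathologies of de Rham cohomology of affinoids --- infinite-dimensional and non-separated --- show that partial properness cannot simply be dropped. A proof of Conjecture~\ref{cst1} in full generality thus seems to require genuinely new structural input on $\rg_{\hk}(Y_C)$ and $\rg_{\dr}(Y)$, of the nature of a Hodge-de Rham degeneration or a finiteness theorem, going beyond what enters Theorem~\ref{basic1}.
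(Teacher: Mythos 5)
The statement you are addressing is labelled a \emph{conjecture} in the paper, and the paper is explicit (Theorem~\ref{back1} and the remark following it) that it has not been proved in full generality: the authors record that an earlier claimed general proof failed, and what is actually established is the implication (i)$\Rightarrow$(ii) plus verification in a list of cases (proper, Stein, analytifications, small tube complements, certain products). Your proposal is therefore in the right register: you correctly recognize the statement as conjectural, you correctly identify that (i) reduces to the vanishing of the Mayer--Vietoris connecting maps $\delta$ for the homotopy cartesian square, you correctly sketch the reduction (i)$\Rightarrow$(ii) by applying $\Hom_{G_K}(-,\bdr^+[\tfrac1t])$ and $\varinjlim\Hom_{G_L}(-,\bst^+[\tfrac1t])$ to the short exact sequences of (i) in the spirit of Remark~\ref{ht4}(iii), and you honestly flag that the general case is open and requires new structural input. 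This is consistent with the paper.

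Two caveats, one of which is a genuine inaccuracy. First, the assertion that ``all of the groups in play are Banach--Colmez spaces'' is not correct for general partially proper $Y$: $H^i_{\hk}(Y_C)$ being only a \emph{projective limit} of finite-dimensional modules means that $X^{r,i}$ is at best a qBC, and when the de Rham cohomology is infinite-dimensional (as it typically is for non--quasi-compact $Y$) the terms ${\rm DR}^{r,i}$ are not BC's either. The objects in the geometrized sequence are TVS's in general, BC's only under a finiteness hypothesis on $H^\bullet_{\dr}$ (Proposition~\ref{geo5}). Since your argument to kill $\delta$ in the proper case leans on additivity and rigidity of Dimension in the BC category, you should restrict this to the situation where $\dim_K H^\bullet_{\dr}(Y)<\infty$. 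Second, the paper's actual mechanism for verifying cases of (i) goes through the geometrization of Section~\ref{geo1} and, crucially, the Fargues--Fontaine criterion of Proposition~\ref{geo6}: for small $Y$, (i) is equivalent to $H^1(X_{\rm FF},\mathcal{E}(H^i_{\hk}(Y_C),H^i_{\dr}(Y)))=0$ for $i=n-1,n$. This reformulation in terms of cohomology of modified vector bundles on the curve is what makes the case-by-case verification tractable; a purely BC-theoretic degeneration argument, as you sketch, does not suffice on its own. Incorporating Proposition~\ref{geo6} would make your outline match the paper's proof strategy for the verified cases.
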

\begin{remark}
The above conjecture is an extension of a classical conjecture of Fontaine for \'etale cohomology
of algebraic varieties. Part (ii) is classically phrased in a covariant manner 
as in (iii) of Remark~\ref{ht4}.
This is equivalent to the above formulation because $H^n_{\eet}(Y_C,\Q_p)$ is finite dimensional in
the case of algebraic varieties, but a similar covariant formulation 
for nonproper analytic varieties does
not seem possible.
\end{remark}

\begin{theorem}\label{back1}
{\rm (i)}  In Conjecture \ref{cst1}, we have {\rm (i)$\Rightarrow$(ii)}.

{\rm (ii)} Conjecture \ref{cst1} is verified for proper varieties, Stein varieties,
analytification of algebraic varieties,
complements of a small tube of a smooth analytic subvariety of a proper variety,
products of finite number of varieties of the above type with finite dimensional
de Rham cohomology, products of   proper and  Stein varieties.
\end{theorem}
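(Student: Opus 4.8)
For part~(i), the plan is to feed the short exact sequence of Conjecture~\ref{cst1}(i) (which holds for $r\le n$) into the functors $M\mapsto \Hom_{G_K}(M,\bdr^+[\tfrac1t])$ and $M\mapsto\varinjlim_{[L:K]<\infty}\Hom_{G_L}(M,\bst^+[\tfrac1t])$, computed in $\sd(\Q_{p,\Box})$, and to identify the outcome. Since $\bdr=\bdr^+[\tfrac1t]$ and $\bst^+[\tfrac1t]$ are insensitive to Tate twists, this computes $\Hom_{G_K}(H^n_{\proeet}(Y_C,\Q_p),\bdr)$ and its $\bst$-analogue directly; and in
$$0\to H^n_{\proeet}(Y_C,\Q_p(r))\to H^n({\rm Fil}^r)\oplus X^{r,n}\to H^n_{\rm dR}(Y_K)\otimes^{\Box}_K\bdr^+\to 0$$
the last two terms are, by construction, built functorially from copies of $\Q_p$ and ${\bf G}_a$ over the coefficient modules $H^n_{\rm HK}(Y_C)$ and $H^n_{\rm dR}(Y_K)$, so $\R\Hom_{G_K}(-,\bdr)$ of each is computed term by term from the $t$-adic filtration on $\bdr$ and Tate's values of $H^\bullet(G_K,C(j))$ (this handles $\bdr^+$ and $\bdr^+/{\rm Fil}^r$) and from the fundamental exact sequence~\eqref{fund1} (this handles $\bst^{+,N=0,\varphi=p^r}$, and after $\varinjlim_L$ produces the coefficient ring $C^{\rm nr}$). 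Assembling the long exact sequence of $\R\Hom_{G_K}(-,\bdr)$, the ${\rm Fil}^r$- and $X^{r,n}$-contributions recombine --- by the very definitions of the Hodge filtration on $\R\Gamma_{\dr}(Y_K)\otimes^{\Box}_K\bdr^+$ and of the Hyodo--Kato isomorphism --- into $\Hom_{G_K}(H^n_{\proeet}(Y_C,\Q_p),\bdr)\simeq H^n_{\rm dR}(Y_K)^\dual$ as filtered $K$-modules; repeating with $\bst$ in place of $\bdr$, passing to the colimit over $L$, and keeping track of $\varphi$ and $N$ (using that $\varphi$ is bijective on $H^n_{\rm HK}(Y_C)$ but topologically nilpotent on the complementary pieces, so the colimit cuts out exactly $H^n_{\rm HK}(Y_C)^\dual$ over $C^{\rm nr}$) gives the Hyodo--Kato identification. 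The genuinely delicate point is topological: one has to control the higher $\operatorname{Ext}^1_{G_K}(-,\bdr)$ and the $\varprojlim^1$'s of the de Rham- and Hyodo--Kato-built terms (where a priori $H^1(G_K,\bdr)\neq0$) and see that they do not obstruct the identification, which is where the condensed formalism and the pro-(finite dimensional) structure of $H^n_{\rm HK}(Y_C)$ enter.

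For part~(ii) the mechanism is uniform. By $\alpha_{\rm BK}$ of~\eqref{BK1}, and since $\R\Gamma^{\rm BK}_{\rm syn}(Y_C,\Q_p(r))$ is the homotopy fibre product
$$\R\Gamma^{\rm BK}_{\rm syn}(Y_C,\Q_p(r))=[\R\Gamma_{\hk}(Y_C)\otimes^{\LL_{\Box}}_{C^{\nr}}\bst^+]^{N=0,\varphi=p^r}\times_{\R\Gamma_{\dr}(Y_K)\otimes^{\LL_{\Box}}_K\bdr^+}{\rm Fil}^r(\R\Gamma_{\dr}(Y_K)\otimes^{\LL_{\Box}}_K\bdr^+),$$
Conjecture~\ref{cst1}(i) says precisely that the Mayer--Vietoris long exact sequence of the three complexes above --- all functorial in $Y$ through $\R\Gamma_{\proeet}$, $\R\Gamma_{\dr}$ and $\R\Gamma_{\hk}$ --- breaks into short exact sequences in every degree. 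This is a homological condition that propagates along the Gysin, excision and Künneth distinguished triangles (by the five lemma, once one checks that the relevant connecting maps vanish), so it is enough to settle a few ``atoms'' and close up. \emph{Proper varieties}: here $H^n_{\rm HK}(Y_C)$ is finite dimensional over $C^{\rm nr}$ and $H^n_{\proeet}(Y_C,\Q_p)$ is finite dimensional over $\Q_p$ (Scholze), and~\eqref{basic2} already splits into short exact sequences (Example~\ref{nowy1}(i)); the assertion then reduces to the comparison theorem ${\rm C}_{\rm st}$, which is known \cite{CN1,CN5} (its contravariant $\Hom$-form being deduced by linear algebra from finite dimensionality and the de Rham property of these representations). \emph{Stein varieties}: use Example~\ref{nowy1}(iii), which already presents $H^n_{\proeet}(Y_C,\Q_p(r))$ as an extension of $X^{r,n}$ by $(\Omega^{n-1}(Y_C)/{\rm Ker}\,d)(r-n)$; then compute $H^n({\rm Fil}^r)$ from the Hodge-filtered de Rham complex of the Stein space, using $H^{\ge1}(Y_C,{\cal F})=0$ for coherent ${\cal F}$, and chase the square to conclude.

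The remaining atoms, and the closure under triangles, go as follows. \emph{Analytification of an algebraic variety $Z$}: compactify $Z$ inside a proper $\overline{Z}$, so that $Z^{\rm an}$ is the complement in $\overline{Z}^{\rm an}$ of a tube of $\overline{Z}^{\rm an}\moins Z^{\rm an}$, and conclude from the proper case by the Gysin/excision propagation --- alternatively, compare all four relevant cohomologies of $Z^{\rm an}$ with their algebraic counterparts and invoke the algebraic ${\rm C}_{\rm st}$. \emph{Complement of a small tube}: if $Y=X\moins T$ with $X$ proper and $T$ a sufficiently small tube of a smooth closed analytic subvariety, the Gysin triangles relating $Y$, $X$ and that subvariety --- both of the latter proper, hence known --- on the pro-\'etale, de Rham and Hyodo--Kato sides are compatible with the square, so $Y$ is covered. \emph{Products}: apply the Künneth formula on the pro-\'etale, de Rham, Hyodo--Kato and $\bdr^+$-/$\bst^+$-coefficient sides; when all but at most one factor has finite dimensional de Rham cohomology the relevant solid tensor products are flat (no $\operatorname{Tor}^{\Box}$, no completion defect), so the pullback squares of the factors multiply to that of the product --- this covers both products of finite-de-Rham atoms and products of a proper variety with a Stein one.

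The step I expect to be the main obstacle is the Stein case, together with the products that involve it (proper $\times$ Stein, and generally any product with a Stein factor): one must manipulate non-separated, infinite dimensional objects of $\sd(\Q_{p,\Box})$, verify that the Künneth and $\R\Hom$ exact sequences acquire no hidden derived terms, and control the Hodge filtration on the de Rham complex of a Stein space well enough to compute $H^n({\rm Fil}^r)$ and match it with $X^{r,n}$ over $H^n_{\rm dR}(Y_K)\otimes^{\Box}_K\bdr^+$. A secondary difficulty is the analytification case, where the comparison between the pro-\'etale cohomology of an open algebraic variety and that of its analytification --- equivalently, the excision argument across the non-proper boundary --- is not formal and has to be done by hand.
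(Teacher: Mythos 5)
Your plan for part~(ii) misses the paper's central mechanism. The paper states explicitly (see the opening of Section~\ref{geo1}) that a \emph{crucial ingredient} in the proof of Theorem~\ref{back1}(ii) is the geometrization of the sequence~\eqref{basic2} as an exact sequence of TVS's/qBC's (Theorem~\ref{geo4}, Proposition~\ref{geo5}), together with Proposition~\ref{geo6}: for a small $Y$, Conjecture~\ref{cst1} is \emph{equivalent} to the numerical condition ${\rm ht}({\mathbb H}^n_{\proeet}(Y_C,\Q_p(r)))=\dim_K H^n_{\rm dR}(Y)$, or to the vanishing $H^1(X_{\rm FF},{\cal E}(H^i_{\rm HK}(Y_C),H^i_{\rm dR}(Y)))=0$ on the Fargues--Fontaine curve. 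The actual verification for the examples listed goes through the additivity of the height function for BC's/qBC's and the structure theory of vector bundles on $X_{\rm FF}$, not through a formal five-lemma chase. None of this appears in your plan.

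Your proposed propagation ``along the Gysin, excision and K\"unneth distinguished triangles (by the five lemma, once one checks that the relevant connecting maps vanish)'' is precisely the kind of induction the authors warn against. The remark immediately following Theorem~\ref{back1} records that a proof along these lines was announced and then withdrawn: the reduction ran into $\R\lim$ problems and a ``crucial lemma turned out to be false.'' The issue is that splitting of a long exact sequence into short exact sequences is not a property that passes to cofibers formally; the parenthetical ``once one checks that the relevant connecting maps vanish'' is not a footnote but the entire difficulty, and without the geometrization and height calculus one has no handle on it. Similarly for products: the theorem's precise hypothesis (finite-dimensional de Rham factors, or proper $\times$ Stein) is tuned so that the height/dimension calculus closes up; a blanket K\"unneth/flatness assertion does not substitute for that.

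A smaller but real error: you conflate the two separate cases ``analytification of an algebraic variety'' and ``complement of a small tube.'' If $Z\subset\overline Z$ is an open immersion of algebraic varieties with $\overline Z$ proper, then $Z^{\rm an}$ is the complement of the \emph{closed} analytic subset $(\overline Z\moins Z)^{\rm an}$, not of a tube around it; these are genuinely different rigid spaces (a tube is an open neighborhood, and removing it produces a quasi-compact variety, whereas $Z^{\rm an}$ is partially proper but not quasi-compact). The theorem lists them as distinct cases for a reason, and your Gysin/excision reduction of one to the other does not go through. On part~(i), your plan is directionally consistent with what the paper alludes to (applying $\R\Hom_{G_K}(-,\bdr)$ and $\R\Hom_{G_L}(-,\bst^+[\tfrac1t])$ to the bicartesian square and using Tate and the fundamental exact sequence), but the paper delegates this entirely to~\cite[Chap.~9]{CN5} and your sketch similarly defers the real topological bookkeeping, so there is nothing concrete to check there.
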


\begin{remark}
(i) Granting point (i), point (ii) corresponds to \cite[Th.\,8.1, Prop.\,8.17]{CN5}.
Point (i) is not explicitly stated in~\cite{CN5}, but it  is what is actually proved
in~\cite[Chap.\,9]{CN5}.

(ii)
In our announcement~\cite{COB1}, we claimed to be able to prove Conjecture \ref{cst1}
in full generality, but the reduction to the quasi-compact case that we had in mind run into
$\R\lim$ problems for coherent cohomology. Moreover,  our proof in the quasi-compact case relied on
a delicate induction on the number of affinoids needed to cover our space and one crucial lemma
turned out to be false. Hence we only have been able to verify the conjecture in broad classes
of examples but there is clearly room for improvements  (for example, removing the ``small''
from complements of small tubes or proper fibrations over Stein instead of products).
\end{remark}

\begin{question}\label{hk3}
If $Y$ is quasi-compact, can one compactify a model of $Y$ in such a way that the complement
of the special fiber of $Y$ is of codimension~$\geq 1$ ?  This is possible if $\dim Y=1$ and it
is also possible mod $p$. A positive answer would probably make it possible to prove
${\rm C}_{\rm st}$ for quasi-compact dagger varieties.
\end{question}

 \subsection{The case of Stein varieties} Conjecture  ${\rm C}_{\st}$ is true for Stein varieties but, in fact, in that case one can show more.  Let  $Y$ be a smooth  Stein variety over $K$. 
 From Theorem \ref{basic1}, one derives the following, very useful (see \cite{CDN1}), result:
 \begin{theorem}{\rm (Fundamental diagram, \cite[Th. 6.14]{CN5})}\label{affinoids}
 Let $r\geq 0$. There is  a natural map of strictly exact sequences in $\sd({\Q_{p,\Box}})$
  $$
\xymatrix@C=.6cm@R=.6cm{
0\to  \Omega^{r-1}(Y_C)/\kker d\ar[r]\ar@{=}[d] & H^r_{\proeet}(Y_C,\Q_p(r))\ar[d]^{\tilde{\beta}} \ar[r] & (H^r_{\hk}(Y_C){\otimes}^{\Box}_{C^{\nr}}\B^+_{\st})^{N=0,\phi=p^r}\ar[d]^{\iota_{\hk}\otimes\theta} \to 0\\
0\to  \Omega^{r-1}(Y_C)/\kker d \ar[r]^-d & \Omega^r(Y_C)^{d=0} \ar[r] & H^r_{\dr}(Y_C)\longrightarrow 0
}
$$
Moreover, 
$H^r_{\proeet}(Y_C,\Q_p(r))$ is Fr\'echet, 
the vertical maps are strict and  have closed images, and
$ \kker\tilde{\beta}\simeq  (H^r_{\hk}(Y_C){\otimes}^{\Box}_{C^{\nr}}\B^+_{\st})^{N=0,\phi=p^{r-1}}$. 
\end{theorem}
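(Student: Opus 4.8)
\emph{Proof strategy (following \cite[Th.\,6.14]{CN5}).} The plan is to extract both rows from the basic comparison theorem (Theorem~\ref{basic1}) in the Stein situation, to produce the vertical arrow $\tilde\beta$ from the syntomic description of pro-\'etale cohomology, and finally to upgrade the statement to one about Fr\'echet spaces and strict maps. The top row is exactly the short exact sequence of example~(iii) of Section~\ref{nowy1}, read with $n=r$: the Tate twist $(r-n)$ then disappears and $X^{r,r}=(H^r_{\hk}(Y_C)\otimes^{\Box}_{C^{\nr}}\bst^+)^{N=0,\varphi=p^r}$. For the bottom row, the Stein hypothesis gives $H^i(Y_C,\mathcal F)=0$ for every coherent $\mathcal F$ and $i\ge1$, so $\rg_{\dr}(Y_C)$ is computed by its complex of global sections $\mathcal O(Y_C)\to\Omega^1(Y_C)\to\cdots$; hence $H^r_{\dr}(Y_C)=\Omega^r(Y_C)^{d=0}/d\,\Omega^{r-1}(Y_C)$, and $d$ identifies $\Omega^{r-1}(Y_C)/\kker d$ with its closed image $d\,\Omega^{r-1}(Y_C)\subset\Omega^r(Y_C)^{d=0}$, which is the asserted sequence.

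To build $\tilde\beta$, I use the period quasi-isomorphism~\eqref{BK1} to identify $H^r_{\proeet}(Y_C,\Q_p(r))$ with $H^r\big(\rg^{\rm BK}_{\synt}(Y_C,\Q_p(r))\big)$ (the truncation $\tau_{\le r}$ does not change $H^r$). The diagram of Section~\ref{nowy2} provides a natural map $\rg^{\rm BK}_{\synt}(Y_C,\Q_p(r))\to{\rm Fil}^r\big(\rg_{\dr}(Y_K)\otimes^{\LL_{\Box}}_K\bdr^+\big)$; composing it with the morphism induced by $\theta\colon\bdr^+\to C$ (killing $t$, which carries ${\rm Fil}^r\big(\rg_{\dr}(Y_K)\otimes_K\bdr^+\big)$ to ${\rm Fil}^r\rg_{\dr}(Y_C)$) and using $H^r\big({\rm Fil}^r\rg_{\dr}(Y_C)\big)=\Omega^r(Y_C)^{d=0}$ in the Stein case, I obtain $\tilde\beta\colon H^r_{\proeet}(Y_C,\Q_p(r))\to\Omega^r(Y_C)^{d=0}$, functorial in $Y$. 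Commutativity of the right-hand square is the reduction modulo $t$ of the compatibility already present in the Section~\ref{nowy2} diagram: both composites compute the de Rham realization $X^{r,r}\stackrel{\iota_{\hk}\otimes\theta}{\longrightarrow}H^r_{\dr}(Y_C)$. Commutativity of the left-hand square is checked at the level of complexes: the map $\Omega^{r-1}(Y_C)/\kker d\to H^r_{\proeet}(Y_C,\Q_p(r))$ of the top row is a connecting morphism for the mapping-fibre presentation of $\rg^{\rm BK}_{\synt}$, and tracing it through $\tilde\beta$ recovers $d$.

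For the kernel: since the left vertical map is the identity and both squares commute, a diagram chase gives $\kker\tilde\beta\ \stackrel{\sim}{\to}\ \kker\big(X^{r,r}\stackrel{\iota_{\hk}\otimes\theta}{\longrightarrow}H^r_{\dr}(Y_C)\big)$. Here $N=0$ replaces $\bst^+$ by $\bcris^+$, and $\kker\big((\bcris^+)^{\varphi=p^r}\stackrel{\theta}{\to}C\big)=t\,(\bcris^+)^{\varphi=p^{r-1}}$, multiplication by $t$ shifting $\varphi$ by $p$; tensoring over the field $C^{\nr}$ with $H^r_{\hk}(Y_C)$ (which is flat, and for which the relevant $R^1\varprojlim$ vanish since $H^r_{\hk}(Y_C)$ is a countable inverse limit of finite-dimensional $C^{\nr}$-vector spaces) and intersecting with the $\varphi=p^r$-part identifies this kernel with $(H^r_{\hk}(Y_C)\otimes^{\Box}_{C^{\nr}}\bst^+)^{N=0,\varphi=p^{r-1}}$, as claimed. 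Finally, the topological refinement: the $\Omega^j(Y_C)$ are nuclear Fr\'echet (global sections of coherent sheaves on a Stein space) and, $Y$ being Stein, the de Rham differentials on global sections are strict with closed image, so the bottom row is strictly exact with Fr\'echet terms; the same vanishing of higher coherent cohomology lets one compute $\rg_{\hk}(Y_C)\otimes^{\LL_{\Box}}_{C^{\nr}}\bst^+$ and $\rg_{\dr}(Y_K)\otimes^{\LL_{\Box}}_K\bdr^+$ by complexes of well-behaved solid spaces, whence $X^{r,r}$ and $H^r_{\proeet}(Y_C,\Q_p(r))$ are Fr\'echet; and all the maps in~\eqref{basic2} and in the Section~\ref{nowy2} diagram are induced by morphisms of Banach--Colmez spaces (cf.\ example~(i) of Section~\ref{nowy1} and \cite{CB,CF}), hence strict with closed image. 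In particular $\tilde\beta$ and the inclusion $\Omega^{r-1}(Y_C)/\kker d\hookrightarrow H^r_{\proeet}(Y_C,\Q_p(r))$ are strict with closed image, and $\im\tilde\beta$ is closed, its closedness reducing to that of the image of $X^{r,r}$ in $H^r_{\dr}(Y_C)$.

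\emph{Expected main obstacle.} The delicate part is the last step together with the exactness invoked for the kernel: one must control the topology of $H^r_{\hk}(Y_C)$ for a Stein $Y$ --- its presentation as a countable limit of finite-dimensional pieces, and the behaviour of the solid tensor product and of the functor $(-)^{\varphi=p^r}$ on it --- precisely enough to guarantee strictness and closedness of images throughout. The other subtle point is the commutativity of the right-hand square, where two a priori different descriptions of the de Rham realization of a pro-\'etale class have to be matched.
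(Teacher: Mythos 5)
Your strategy matches what the survey indicates (``From Theorem~\ref{basic1}, one derives the following''): read example~(iii) of Section~\ref{nowy1} with $n=r$ for the top row, take the bottom row as the global-sections de Rham complex (valid since $Y$ is Stein), build $\tilde\beta$ via the identification $H^r_{\proeet}(Y_C,\Q_p(r))\simeq H^r\rg^{\rm BK}_{\synt}$ followed by the map to ${\rm Fil}^r(\rg_{\dr}\otimes\bdr^+)$ and $\theta$-reduction, and compute $\ker\tilde\beta$ by the snake lemma together with the fundamental-exact-sequence identity $\ker\theta\cap(\bcris^+)^{\varphi=p^r}=t\,(\bcris^+)^{\varphi=p^{r-1}}$. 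The construction of $\tilde\beta$ and the kernel computation are essentially right; the phrase ``intersecting with the $\varphi=p^r$-part'' is garbled but the intended argument (reduce to the finite-rank case of $H^r_{\hk}$ via the countable projective system with surjective transitions, then identify $\ker(\mathrm{id}\otimes\theta)$ with the $\varphi=p^{r-1}$-eigenspace through $t$-multiplication) is correct.

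The one genuine misstep is in the last paragraph. You justify strictness and closedness of all maps by asserting that ``all the maps \dots\ are induced by morphisms of Banach--Colmez spaces (cf.\ example~(i) of Section~\ref{nowy1} and \cite{CB,CF})''. But example~(i) is the \emph{proper} case, where the cohomology groups are finite-dimensional over $\Q_p$ and genuine BC's; for a Stein $Y$ the groups $\Omega^{r-1}(Y_C)/\ker d$, $X^{r,r}$, $H^r_{\proeet}(Y_C,\Q_p(r))$ are infinite-dimensional (in the survey's later terminology qBC's, not BC's), and the dichotomy and closed-image results of \cite{CB,CF} do not apply to them. The conclusion is still true, but the justification must run through the Fr\'echet structure directly: the terms are nuclear Fr\'echet because they are cohomologies of explicit complexes of nuclear Fr\'echet spaces with strict differentials (for $\Omega^\bullet(Y_C)$ this is Kiehl/Grosse-Kl\"onne; for the solid tensor products with period rings one needs the $\varprojlim$-control you sketch, using that $\rg_{\hk}$ is a countable limit of finite-dimensional $\varphi$-modules with surjective transitions), and then strictness of $\tilde\beta$ and of the inclusion $\Omega^{r-1}(Y_C)/\ker d\hookrightarrow H^r_{\proeet}$ follows from the open-mapping theorem applied to each short exact row. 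You do flag this as the ``expected main obstacle'', which is the right instinct; just be aware that the BC-morphism shortcut is not available here.
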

\subsection{Cohomology with compact support}
We can also formulate a ${\rm C}_{\rm st}$-conjecture for compactly supported cohomology
(at least an analog of part (ii) of Conjecture \ref{cst1}), but it takes a more sophisticated shape.
Instead of ${\rm Hom}_{G_K}(-,B)$ for $B=\bdr^+[1/t],\bst^+[1/t]$, we
need to take a derived $\Hom$ and also  kill Galois cohomology of period rings in degrees~$\geq 1$.
Otherwise one gets extra copies of the desired groups popping out in random degrees.
\subsubsection{Period ring $\B_{\rm pFF}$} To achieve the second goal, let us introduce new period rings. 

Define (as in~\cite{Csen}) the $p$-adic avatar $\log t$ of $\log 2\pi i$ 
as being transcendental over $\bdr^+$,
with $\sigma\in G_K$ acting via $\sigma(\log t)=\log t+\log\chi(\sigma)$ (remember that
$\sigma(t)=\chi(\sigma)t$).
Define (after Fontaine~\cite{Fpdr}) 
$$\bpdr^+:=\bdr^+[\log t]$$
with the filtration induced from $\bdr^+$. 

  Now, one can replace $\bst^+$ par $\brig^+[\log\tilde p]$, 
with $\brig^+:=\cap_{n\geq 0}\varphi^n(\bcris^+)$, in the statement of Conjecture \ref{cst1}
(because the relevant periods live in finite dimension spaces stable by $\varphi$).
But $\brig^+$ is the ring of analytic functions on the Fargues-Fontaine curve
${\rm Spa}\,\ainf\moins\{p=0\}$. For the same reason, one can also replace $\brig^+$
by the ring $\B$ of analytic functions on the Fargues-Fontaine curve
$Y_{\rm FF}={\rm Spa}\,\ainf\moins\{p=0,\tilde p=0\}$.  Define:
$$\bFF^+:=\B[\log\tilde p],\quad \bpFF^+:=\bFF^+[\log t].$$
By inverting $t$ we get the period rings $\bpdr, \bFF, \bpFF$.

  We have the following results:
\begin{theorem}\label{acyclicity}
{\rm (i)} {\rm (Semistable ver de Rham, \cite[Prop.\,10.3.15]{FF18}, \cite[Cor.\,2.3]{CGN3})} 
The natural morphism $\bFF^+\to\bdr^+$ is injective and
$G_K$-equivariant; the same is true for its natural extensions
$$\bpFF^+\to\bpdr^+,\quad \bpFF\to\bpdr.
$$

{\rm(ii)} {\rm (Galois acyclicity, \cite[Th.1.4, Th.\,2.4]{CGN3})} We have  the isomorphisms
\begin{align*}
 & H^0(G_K,\bpdr)=K,\quad 
H^0(G_K,\bpFF)=F, \\
& H^i(G_K,\Lambda)=0,\quad  \mbox{if } \,i\geq 1,  \quad \Lambda=\bpdr, \bpFF.
\end{align*}
\end{theorem}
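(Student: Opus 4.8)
\emph{Proof plan for Theorem~\ref{acyclicity}.} The two parts are linked: (i) is the geometric input and (ii) is deduced from it by a cohomological device.

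\textbf{Part (i).} By construction $\bdr^+$ is the completed local ring $\wh{\so}_{Y_{\rm FF},\infty}$ of the integral adic space $Y_{\rm FF}=\Spa(\ainf)\setminus\{p=0,\tilde p=0\}$ at the point $\infty$ cut out by $\theta$; this is a discrete valuation ring with uniformiser $t$, and the ring of global functions on a connected reduced adic space injects into such a completed local ring, so $\B\hookrightarrow\brig^+\hookrightarrow\bdr^+$, all $G_K$-equivariantly. One then notes that the image of $\log\tilde p$ under Fontaine's embedding is transcendental over $\bcris$ — this is exactly the classical injectivity of $\bst\hookrightarrow\bdr$ — so $\bFF^+=\B[\log\tilde p]\hookrightarrow\bdr^+$ once the branch $\log p$ is chosen inside $\bdr^+$ (e.g.\ $\log p=0$). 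Adjoining $\log t$, transcendental over $\bdr$ by its very definition, gives $\bpFF^+\hookrightarrow\bpdr^+$, and inverting $t$ gives $\bpFF\hookrightarrow\bpdr$. This recovers \cite[Prop.\,10.3.15]{FF18} together with \cite[Cor.\,2.3]{CGN3}.

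\textbf{Part (ii), the device.} The Galois cohomology of $\bpdr$ (resp.\ $\bpFF$) is obtained from that of $\bdr$ (resp.\ of $\bFF$, hence of $\B[\tfrac1t]$) by adjoining $\log t$ (resp.\ $\log t$ and $\log\tilde p$) one variable at a time, using that differentiation in the new variable commutes with $G_K$. Indeed $\sigma$ acts on $\log t$ by the translation $\log t\mapsto\log t+\log\chi(\sigma)$ with $\log\chi(\sigma)\in\Q_p$ fixed by $G_K$, so $\partial:=\tfrac{d}{d\log t}$ is $G_K$-equivariant; it is surjective (antiderivatives exist as $\Q\subset\bdr^+$), has kernel $\bdr$, and is locally nilpotent, since $\bpdr=\bigcup_d P_d$ with $P_d:=\bigoplus_{j\le d}\bdr\,(\log t)^j$ a $G_K$-stable submodule killed by $\partial^{d+1}$. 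One thus has a short exact sequence of $G_K$-modules
\[
0\to\bdr\to\bpdr\xrightarrow{\ \partial\ }\bpdr\to0 ,
\]
and, feeding it the classical computation $H^0(G_K,\bdr)=K$, $H^1(G_K,\bdr)=K$, $H^i(G_K,\bdr)=0$ for $i\ge2$ (from Tate's theorems~\cite{Tate}: the graded pieces of $\bdr^+$ are the $C(n)$, $n\ge0$, so $H^\bullet(G_K,\bdr^+)=H^\bullet(G_K,C)$, and $\bdr/\bdr^+$ is $G_K$-acyclic), the long exact sequence forces $\partial$ to act invertibly on $H^i(G_K,\bpdr)$ for $i\ge1$; being also locally nilpotent there (a filtered colimit of the nilpotent operators on the $H^i(G_K,P_d)$), it acts as zero, whence $H^i(G_K,\bpdr)=0$ for $i\ge1$. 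The same sequence gives $H^0(G_K,\bpdr)=K$: an invariant $x=\sum_{j\le d}a_j(\log t)^j$ has $a_d\in\bdr^{G_K}=K$, and if $d\ge1$ then comparing coefficients of $(\log t)^{d-1}$ in $\sigma(x)=x$ yields $\sigma(a_{d-1})-a_{d-1}=-d\,a_d\log\chi(\sigma)$, i.e.\ $d\,a_d$ annihilates the class $[\log\chi]\in H^1(G_K,\bdr)$; since $[\log\chi]\neq0$ (it maps to the nonzero generator of $H^1(G_K,C)$) and $d\neq0$ in $K$, we get $a_d=0$, a contradiction, so $d=0$.

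\textbf{Part (ii), the ring $\bpFF$.} The same machine is run over $\bFF=\B[\tfrac1t][\log\tilde p]$, built in two steps from $\B[\tfrac1t]$: the operator $\tfrac{d}{d\log\tilde p}$ is again $G_K$-equivariant because $\sigma(\log\tilde p)=\log\tilde p+a_\sigma t$, where $\sigma\mapsto a_\sigma t\in\B\subset\bFF$ is the Kummer cocycle of $p$, so $\sigma$ still acts by a translation in that variable; it is surjective, locally nilpotent, with kernel $\B[\tfrac1t]$ (resp.\ $\B[\tfrac1t][\log t]$ if one keeps $\log t$). Fed the Galois cohomology of $\B[\tfrac1t]$ coming from the geometry of $Y_{\rm FF}$ — namely $F$ in degree $0$, the classes of $\log\chi$ and of the Kummer cocycle of $p$ in degree~$1$, and vanishing in degree $\ge2$, as in \cite{CGN3} building on \cite{FF18} — the long exact sequences yield, exactly as above, $H^0(G_K,\bpFF)=F$ and $H^i(G_K,\bpFF)=0$ for $i\ge1$; here the $H^0$-computation uses part~(i) to import the non-triviality of $[\log\chi]$ from $\bdr$ into $\bFF$ and the transcendence of $\log t$ over $\bdr^+$, together with $\bst^{G_K}=F$ (Fontaine) forcing $\bFF^{+,G_K}=\bpFF^{+,G_K}=F$ via the sandwich $F=W(k)[\tfrac1p]\subset\bFF^+\subset\bst$, and the fact that localising at $t$ adds no new invariants (the twisted invariants $\bFF^+(n)^{G_K}\hookrightarrow\bdr^+(n)^{G_K}$ vanish for $n>0$).

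\textbf{Main obstacle.} The real work is the base case: establishing the precise shape of $H^\bullet(G_K,\bFF)$, equivalently of $H^\bullet(G_K,\B[\tfrac1t])$ — $F$ in degree $0$, the two expected classes in degree $1$, nothing in higher degree. This is where the structure theory of the Fargues--Fontaine curve $Y_{\rm FF}$ genuinely enters, and it is also where part~(i) is indispensable, since the injection $\bFF^+\hookrightarrow\bdr^+$ is what transports non-vanishing of cohomology classes from the well-understood $\bdr$ to $\bFF$. A secondary, bookkeeping-type difficulty is that all these rings are condensed, so one must track topologies and the exactness of $\R\lim$ when passing from $\bdr^+/t^n$ to $\bdr^+$ and from the $+$-rings to their localisations at $t$.
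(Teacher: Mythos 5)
The paper itself gives no proof here: Theorem \ref{acyclicity} is stated as a citation of \cite[Prop.\,10.3.15]{FF18} and of \cite[Th.\,1.4, Th.\,2.4, Cor.\,2.3]{CGN3}, so I can only judge your plan on its internal coherence, and there I see a genuine problem in part (ii) for $\bpFF$.

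Your treatment of $\bpdr$ is correct and is the standard dévissage; it works precisely because $H^1(G_K,\bdr)$ is one\nobreakdash-dimensional and $H^{\geq 2}(G_K,\bdr)=0$, so the single boundary class $[\log\chi]$ saturates $H^1$. For $\bpFF$ you feed the same machine the inputs ``$H^0(G_K,\B[\tfrac1t])=F$, $H^1$ spanned by $[\log\chi]$ and the Kummer class, $H^{\geq 2}=0$'' and declare that the long exact sequences yield the result \emph{exactly as above}. They do not, and in fact those inputs are incompatible with the conclusion. Run your own argument: with $H^2(G_K,\B[\tfrac1t])=0$, the sequence $0\to\B[\tfrac1t]\to\bFF\xrightarrow{\partial_{\log\tilde p}}\bFF\to0$ makes $\partial_{\log\tilde p}$ surjective on $H^1(G_K,\bFF)$; its kernel is the image of $H^1(G_K,\B[\tfrac1t])$ modulo the one\nobreakdash-dimensional image of $\delta_0\colon F=H^0(G_K,\bFF)\to H^1(G_K,\B[\tfrac1t])$, $a\mapsto a[ct]$, hence contains the nonzero class $[\log\chi]$ (which is nonzero in $\B[\tfrac1t]$ precisely by part (i), since it is nonzero in $\bdr$). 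A locally nilpotent, surjective endomorphism with nonzero kernel forces $H^1(G_K,\bFF)$ to be infinite\nobreakdash-dimensional over $F$; the boundary $H^0(G_K,\bpFF)=F\to H^1(G_K,\bFF)$ in the second sequence is then nowhere near surjective, so $H^1(G_K,\bpFF)\neq0$ — contradicting the theorem. Conversely, if one applies $\R\Gamma(G_K,-)$ to the Koszul resolution $0\to\B[\tfrac1t]\to\bpFF\to\bpFF^2\to\bpFF\to0$ of $\B[\tfrac1t]$ by $(\partial_{\log\tilde p},\partial_{\log t})$ and assumes the conclusion ($H^0(G_K,\bpFF)=F$, $H^{\geq1}=0$, with both $\partial$'s acting as $0$ on $F$), the spectral sequence degenerates and gives $H^0(G_K,\B[\tfrac1t])=F$, $H^1\cong F^2$, $H^2\cong F$, $H^{\geq3}=0$. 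So the real base\nobreakdash-case input has a \emph{nonvanishing} $H^2$, and your claimed vanishing in degree $\geq2$ is simply false. Moreover, even after correcting the input, the two long exact sequences alone still do not determine $H^1(G_K,\bFF)$ — they only bound it — so an additional argument is needed to pin it down, and that is where the structure theory of $Y_{\rm FF}$ must actually be used; ``exactly as above'' is not enough.

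Two smaller points. In part (i) the chain of inclusions should read $\brig^+\hookrightarrow\B\hookrightarrow\bdr^+$: removing the extra locus $\{\tilde p=0\}$ shrinks the space and enlarges the ring of analytic functions, so the restriction map goes from $\brig^+$ into $\B$, not the other way round. And the sandwich $F\subset\bFF^+\subset\bst$ that you use to get $\bFF^{+,G_K}=F$ is unjustified: it needs $\B\subset\bcris$, which is dubious because $\B$ contains functions with poles along $\{\tilde p=0\}$ that $\bcris$ does not obviously see. You already have the honest inclusion $\B\subset\bdr^+$ from part (i), which only gives $\bFF^{+,G_K}\subset K$; cutting this down to $F$ requires a different argument (for instance via the Frobenius on $\B$), and should be stated.
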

\begin{remark} {\rm (Period sheaves as avatars of refined de Rham cohomologies.)} Comparison quasi-isomorphisms from 
Corollary \ref{finish1} and  Theorem \ref{CGN21} combined with the Galois acyclicity from Theorem \ref{acyclicity} yield the following result (one of our original motivations for proving Theorem \ref{acyclicity}):
\begin{corollary}
\begin{enumerate}[leftmargin=*]
 \item {\rm (\cite[Cor.\,3.17]{BDN})} Let  $Y$ be a smooth quasi-compact rigid analytic variety. There are natural quasi-isomorphisms in $\sd(K_{\Box})$ 
 \begin{align*}
F^0\rg_{\dr}(Y)\ & \simeq \rg(G_K, \rg_{\proeet}(Y_C,{\mathbb B}^+_{\rm pdR})),  \\
\rg_{\dr}(Y) & \simeq \rg(G_K, \rg_{\proeet}(Y_C,{\mathbb B}_{\rm pdR}) ).\notag
\end{align*}
\item  {\rm (\cite[Cor.\,3.31]{BDN})} Assume that $k$ is algebraically closed. Let  $Y$ be  a smooth partially proper rigid analytic variety with finite rank de Rham cohomology. Then there is a  natural $G_K$-equivariant quasi-isomorphism in $\sd_{\phi,N}(\breve{C}_{\Box})$
\begin{align*}
 \R\Gamma_{\hk}(Y_C) \stackrel{\sim}{\to} ([\R\Gamma_{\proeet}(Y_C,{\mathbb B})[1/t]]^{\phi=1}\otimes^{\LL_{\Box}}_{\B_e}\B_{\rm pFF})^{\R G_K-{\rm sm}}.
\end{align*}
\end{enumerate}
\end{corollary}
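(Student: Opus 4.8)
\emph{Strategy.} The plan is to treat both items by one recipe: start from the known description of the pro-\'etale cohomology of a period \emph{sheaf} in terms of (refined) de Rham cohomology (Corollary~\ref{finish1} and Theorem~\ref{CGN21}), adjoin the logarithmic variable $\log t$ --- which is harmless for $\rg_{\proeet}$ on a quasi-compact space, being a polynomial extension of the sheaf that commutes with cohomology --- and then apply $\rg(G_K,-)$, resp.\ pass to $\R G_K$-smooth vectors, the Galois acyclicity of Theorem~\ref{acyclicity} collapsing the period ring to $K$, resp.\ to $\breve C$. In both statements the natural map is the one induced by the period morphisms underlying Corollary~\ref{finish1} and Theorem~\ref{CGN21}; the content is that it is a quasi-isomorphism.

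\emph{Item (1).} First I would record, from Corollary~\ref{finish1}, the quasi-isomorphism $\rg_{\proeet}(Y_C,{\mathbb B}^+_{\dr})\simeq F^0(\rg_{\dr}(Y_K)\otimes^{\LL_{\Box}}_K\bdr)$, and, inverting $t$ --- a filtered colimit over $t^{-j}{\mathbb B}^+_{\dr}$, which commutes with $\rg_{\proeet}(Y_C,-)$ since $Y_C$ is quasi-compact and which exhausts the filtration on the bounded complex $\rg_{\dr}(Y_K)$ --- the quasi-isomorphism $\rg_{\proeet}(Y_C,{\mathbb B}_{\dr})\simeq\rg_{\dr}(Y_K)\otimes^{\LL_{\Box}}_K\bdr$. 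As a pro-\'etale sheaf ${\mathbb B}^+_{\rm pdR}=\bigoplus_{j\ge 0}{\mathbb B}^+_{\dr}\,(\log t)^{j}$, and $\rg_{\proeet}(Y_C,-)$ commutes with this countable sum on the quasi-compact $Y_C$, so one gets $\rg_{\proeet}(Y_C,{\mathbb B}^+_{\rm pdR})\simeq F^0(\rg_{\dr}(Y_K)\otimes^{\LL_{\Box}}_K\bpdr)$ and $\rg_{\proeet}(Y_C,{\mathbb B}_{\rm pdR})\simeq\rg_{\dr}(Y_K)\otimes^{\LL_{\Box}}_K\bpdr$, where $\bpdr^+=\bdr^+[\log t]$ carries the filtration induced from $\bdr^+$ (so $\log t$ sits in degree $0$). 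It remains to apply $\rg(G_K,-)$. For ${\mathbb B}_{\rm pdR}$ this is immediate: since $\rg_{\dr}(Y_K)$ is a complex of $K$-vector spaces with trivial $G_K$-action, $\rg(G_K,\rg_{\dr}(Y_K)\otimes^{\LL_{\Box}}_K\bpdr)\simeq\rg_{\dr}(Y_K)\otimes^{\LL_{\Box}}_K\rg(G_K,\bpdr)\simeq\rg_{\dr}(Y_K)\otimes_K K=\rg_{\dr}(Y)$ by Theorem~\ref{acyclicity}(ii). For ${\mathbb B}^+_{\rm pdR}$ I would argue along the Hodge filtration: the natural map $F^0\rg_{\dr}(Y)\to\rg(G_K,\rg_{\proeet}(Y_C,{\mathbb B}^+_{\rm pdR}))$ is compatible with the exhaustive, finite-length (as $\dim Y<\infty$) filtrations on both sides, and on the $j$-th graded piece it becomes $\operatorname{gr}^j\rg_{\dr}(Y_K)\to\rg(G_K,\operatorname{gr}^j\rg_{\dr}(Y_K)\otimes^{\LL_{\Box}}_K\bpdr^+(-j))$, which is an isomorphism by the Galois acyclicity of $\bpdr^+$ and its Tate twists (a consequence of Theorem~\ref{acyclicity}(ii) together with the acyclicity of $\bpdr/\bpdr^+$); hence the natural map is an isomorphism.

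\emph{Item (2).} With $k$ algebraically closed one has $C^{\nr}=\breve C=F$, and, $Y$ being partially proper with finite-rank de Rham cohomology, $\rg_{\hk}(Y_C)$ is a perfect complex of $(\phi,N)$-modules over $\breve C$ with smooth $G_K$-action. I would proceed in four steps. (a) Feed Theorem~\ref{CGN21} --- in the limit over the compact intervals $I$, in the $\LL\eta_t$-form of \eqref{CGN2} (Remark~\ref{BoscHK}), and after inverting $t$, which removes the $\bdr^+/{\rm Fil}^r$ contributions and makes the $\{r\}(-r)$-twist disappear since $t$ is now a unit --- to obtain a Frobenius-equivariant quasi-isomorphism $\rg_{\proeet}(Y_C,{\mathbb B})[1/t]\simeq[\rg_{\hk}(Y_C)\otimes^{\LL_{\Box}}_{\breve C}\bFF]^{N=0}$ with $\bFF=\B[1/t][\log\tilde p]$. (b) Take $\phi=1$, so $[\rg_{\proeet}(Y_C,{\mathbb B})[1/t]]^{\phi=1}\simeq[\rg_{\hk}(Y_C)\otimes^{\LL_{\Box}}_{\breve C}\bFF]^{N=0,\phi=1}$, a perfect complex of $\B_e$-modules (the fiberwise sections over the affine Fargues--Fontaine curve of the bundle attached to the isocrystal $\rg_{\hk}(Y_C)$). (c) Base change along $\B_e\hookrightarrow\B_{\rm pFF}$: using Theorem~\ref{acyclicity}(i) and the fact that $\B_{\rm pFF}$ is precisely large enough to absorb the Frobenius-and-monodromy structure (the Fargues--Fontaine avatar of Fontaine's $\bst[\log t]$ for potentially semistable representations), one gets $[\rg_{\hk}(Y_C)\otimes^{\LL_{\Box}}_{\breve C}\bFF]^{N=0,\phi=1}\otimes^{\LL_{\Box}}_{\B_e}\B_{\rm pFF}\simeq\rg_{\hk}(Y_C)\otimes^{\LL_{\Box}}_{\breve C}\B_{\rm pFF}$. (d) Take $\R G_K$-smooth vectors and invoke Theorem~\ref{acyclicity}(ii): the $\R G_K$-smooth vectors of $\B_{\rm pFF}$ are $\breve C$, with vanishing higher smooth Galois cohomology, so --- $\rg_{\hk}(Y_C)$ being finite-rank and $G_K$-smooth over $\breve C$ --- $(\rg_{\hk}(Y_C)\otimes^{\LL_{\Box}}_{\breve C}\B_{\rm pFF})^{\R G_K-{\rm sm}}\simeq\rg_{\hk}(Y_C)\otimes^{\LL_{\Box}}_{\breve C}\breve C=\rg_{\hk}(Y_C)$; one then checks that the composite of (a)--(d) is the stated natural map.

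\emph{Main obstacle.} Item (1) is essentially formal once Corollary~\ref{finish1} and Theorem~\ref{acyclicity} are in hand. The real work is in item (2), steps (c)--(d): one must know that the $\B_e$-complex (equivalently, the Fargues--Fontaine vector-bundle complex) cut out by $[\rg_{\proeet}(Y_C,{\mathbb B})[1/t]]^{\phi=1}$ is ``de Rham/potentially semistable'' --- this is where the finite-rank and partial-properness hypotheses, and ultimately the basic comparison theorem (Theorem~\ref{basic1}), enter --- and then that base change to $\B_{\rm pFF}$ followed by $\R G_K$-smooth vectors descends it back correctly. Concretely, making the $\R G_K$-smooth-vectors functor commute with $\rg_{\hk}(Y_C)\otimes^{\LL_{\Box}}_{\breve C}(-)$ in the derived, solid setting, together with the computation of $(\B_{\rm pFF})^{\R G_K-{\rm sm}}$, is the delicate point --- and it is exactly Theorem~\ref{acyclicity} that unlocks it.
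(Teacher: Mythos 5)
Your proposal takes exactly the route the survey indicates: the stated proof of this Corollary in the paper is precisely the one sentence preceding it, namely that it follows by combining the cohomology of period sheaves (Corollary~\ref{finish1}, Theorem~\ref{CGN21} and its $\LL\eta_t$-form in Remark~\ref{BoscHK}) with the Galois acyclicity of $\bpdr$ and $\bpFF$ (Theorem~\ref{acyclicity}) --- the detailed argument being deferred to~\cite{BDN}, which is still in preparation. Your unpacking of that sketch is the obvious one and is correct in outline; the only points I would flag as needing genuine work, and which you yourself half-flag, are (a) the projection-formula steps --- $\rg(G_K,\rg_{\dr}(Y_K)\otimes^{\LL_{\Box}}_K\bpdr)\simeq\rg_{\dr}(Y_K)\otimes^{\LL_{\Box}}_K\rg(G_K,\bpdr)$ and the analogous $(\rg_{\hk}(Y_C)\otimes^{\LL_{\Box}}_{\breve C}\B_{\rm pFF})^{\R G_K\text{-}{\rm sm}}\simeq\rg_{\hk}(Y_C)$ --- which for merely quasi-compact $Y$ (hence possibly infinite-dimensional $\rg_{\dr}(Y_K)$) are not formal and require the solid framework to carry them, and (b) step (c) of item (2), where the ``recovery'' isomorphism $[\rg_{\hk}(Y_C)\otimes\bFF]^{N=0,\phi=1}\otimes_{\B_e}\B_{\rm pFF}\simeq\rg_{\hk}(Y_C)\otimes_{\breve C}\B_{\rm pFF}$ must be made $(\phi,N)$-equivariant, which is not automatic since the $\phi$- and $N$-actions on the two sides are sourced differently. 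These are details rather than gaps in the approach.
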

\end{remark}

\subsubsection{Conjecture ${\rm C}_{\st}$} The new period rings described above allowed us to formulate the following conjecture:
\begin{conjecture}\label{cst2} Let $K$ be a finite extension of $\Q_p$. Let $Y$ be a smooth partially proper rigid analytic variety over $K$ of dimension $d$.
We can recover de Rham and Hyodo-Kato cohomologies from the compactly supported pro-\'etale one:
\begin{align*}
{\rm RHom}_{G_K}(\rg_{{\proeet},c}(Y_C,\Q_p(d))[2d],
\bpdr)&\simeq \rg_{\rm dR}(Y_K),\quad
{\text{as filtered $K$-modules,}}\\
\varinjlim\nolimits_{[L:K]<\infty}
{\rm RHom}_{G_L}(\rg_{{\proeet},c}(Y_C,\Q_p(d))[2d],
\bpFF)&\simeq \rg_{\rm HK}(Y_C),\quad
{\text{as $(\varphi,N,G_K)$-modules over $C^{\rm nr}$.}}
\end{align*}
\end{conjecture}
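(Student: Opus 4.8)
The plan is to deduce Conjecture~\ref{cst2} from three inputs: Poincar\'e duality for $p$-adic pro-\'etale cohomology, a biduality (``double duality with period coefficients'') statement in the category of Banach--Colmez spaces, and the period-sheaf comparisons of~\cite[Cor.\,3.17, Cor.\,3.31]{BDN} quoted above; the point of the rings $\bpdr$ and $\bpFF$ is precisely that their higher Galois cohomology vanishes (Theorem~\ref{acyclicity}), so that the derived $\mathrm{Hom}$ over $G_K$ does not produce the spurious copies flagged in Section~\ref{nowy2}. First I would use Poincar\'e duality for smooth partially proper rigid analytic varieties (Mann~\cite{Mann}, Zavyalov~\cite{Zav}, in the Banach--Colmez refinement underlying the dualities of~\cite{CGN,CGN2,ZL,ALBM}) to rewrite, $G_K$-equivariantly in $\sd(\Q_{p,\Box})$,
$$\rg_{\proeet,c}(Y_C,\Q_p(d))[2d]\;\simeq\;\R\uHom_{\sd(\Q_{p,\Box})}\big(\rg_{\proeet}(Y_C,\Q_p),\Q_p\big).$$
Setting $M:=\rg_{\proeet}(Y_C,\Q_p)$ and using $\mathrm{RHom}_{G_K}(-,B)=\R\Gamma\big(G_K,\R\uHom_{\sd(\Q_{p,\Box})}(-,B)\big)$, the conjecture becomes the assertion that $\R\Gamma\big(G_K,\R\uHom(\R\uHom(M,\Q_p),\bpdr)\big)\simeq\rg_{\dr}(Y_K)$ as filtered $K$-modules, together with the parallel statement with $\bpFF$ in place of $\bpdr$, $\varinjlim_{[L:K]<\infty}\R\Gamma(G_L,-)$ in place of $\R\Gamma(G_K,-)$, and $\rg_{\hk}(Y_C)$ in place of $\rg_{\dr}(Y_K)$.

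The key step is biduality. By the slogans of Section~\ref{slonce1} and the basic comparison theorem~\ref{basic1} (resp.\ Theorem~\ref{main-kg} on the compactly supported side), the cohomology objects of $M$ are assembled, locally and then globally, from copies of $\Q_p$ and coherent-cohomology ($\mathbf{G}_a$-type) pieces; i.e.\ $M$ is a complex of limits and colimits of finite Banach--Colmez spaces in the sense of~\cite{CB,CF}. For such $M$ and for $B\in\{\bpdr,\bpFF\}$, the canonical arrow
$$M\otimes^{\LL_{\Box}}_{\Q_p}B\ \longrightarrow\ \R\uHom\big(\R\uHom(M,\Q_p),B\big)$$
should be a quasi-isomorphism --- this is the biduality theory for BC's with coefficients, and Theorem~\ref{acyclicity} is what guarantees that the higher Galois cohomology of $B$ contributes nothing downstream. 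Granting this, the left-hand side reduces to $\R\Gamma(G_K,M\otimes^{\LL_{\Box}}_{\Q_p}\bpdr)$.

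Next, identify $M\otimes^{\LL_{\Box}}_{\Q_p}\bpdr$ with $\rg_{\proeet}(Y_C,{\mathbb B}_{\rm pdR})$ --- a form of the de Rham comparison, which is ${\rm C}_{\dr}$ tensored up in the proper case and follows from Theorem~\ref{basic1} and the structure of ${\mathbb B}_{\rm pdR}$ in general --- and invoke a partially-proper version of~\cite[Cor.\,3.17]{BDN} to get $\R\Gamma(G_K,\rg_{\proeet}(Y_C,{\mathbb B}_{\rm pdR}))\simeq\rg_{\dr}(Y_K)$, the Hodge filtration being recovered from the filtration on ${\mathbb B}_{\rm pdR}$ and the acyclicity of its graded pieces. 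The $\bpFF$-statement runs in parallel via $M\otimes^{\LL_{\Box}}_{\Q_p}\bpFF\simeq\rg_{\proeet}(Y_C,{\mathbb B}_{\rm pFF})$ and~\cite[Cor.\,3.31]{BDN}: the colimit over finite $L/K$ picks out the $\R G_K$-smooth part and, with $H^0(G_K,\bpFF)=F$, yields $\rg_{\hk}(Y_C)$ over $C^{\nr}$, the Frobenius and monodromy matching those on $\bFF^+=\B[\log\tilde p]$ through $\varphi(\log\tilde p)=p\log\tilde p$ and $N(\log\tilde p)=-1$. One then checks that Poincar\'e duality, the biduality isomorphism, and the comparisons of~\cite{BDN} are all compatible with the filtration, $\varphi$ and $N$, and that the $[2d]$-shift and $\Q_p(d)$-twist on the compactly supported side are absorbed exactly by the dualizing data.

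The main obstacle I expect is the biduality step for varieties that are not quasi-compact. When $Y$ is proper, the cohomology of $M$ consists of honest finite Banach--Colmez spaces and biduality is classical; but for Stein or general partially proper $Y$ these groups are only countable limits and colimits of such, and $\R\uHom(\R\uHom(-,\Q_p),B)$ need not commute with those limits on the nose --- this is exactly the $\R\lim$/coherent-cohomology pathology that, as recorded after Theorem~\ref{back1}, already derailed the attempted general proof of Conjecture~\ref{cst1}. A secondary difficulty is that~\cite[Cor.\,3.31]{BDN} is presently available only under a finiteness hypothesis on $\rg_{\dr}$, so the $\bpFF$-part needs either a pro-version of Hyodo--Kato biduality or a d\'evissage to the finite-rank case (along the lines of the list in Theorem~\ref{back1}(ii)); and upgrading~\cite[Cor.\,3.17]{BDN} from quasi-compact to partially proper $Y$ reintroduces the same $\R\lim$ issues for $\rg_{\proeet}(Y_C,{\mathbb B}_{\rm pdR})$. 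One should therefore expect this route to yield the conjecture first for the broad classes of examples already appearing in Theorem~\ref{back1}(ii), with the general case hinging on resolving these limit problems.
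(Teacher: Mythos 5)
The target statement is stated as a \emph{conjecture}; the paper gives only the heuristic in the remark following it and no proof, so there is nothing to compare your argument against line by line. The question is whether your strategy is viable and whether the obstacles you flag are the right ones.

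Your broad outline---use geometric Poincar\'e duality together with the Galois acyclicity of $\bpdr$, $\bpFF$ (Theorem~\ref{acyclicity}) and the period-sheaf comparisons of~\cite{BDN}---does match the heuristic the paper offers: the remark after Conjecture~\ref{cst2} notes that de Rham Poincar\'e duality rewrites Conjecture~\ref{cst1}(ii) in terms of $\rg_{{\rm dR},c}$, and Section~4.3 explains that the new rings exist precisely to kill higher Galois cohomology. You also correctly identify the $\R\lim$ problems that already broke the quasi-compact d\'evissage for Conjecture~\ref{cst1}, and the finiteness hypothesis in~\cite[Cor.\,3.31]{BDN}.

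But there is a concrete error at the very first step, and it is not one you flag. You write Poincar\'e duality as
$$\rg_{\proeet,c}(Y_C,\Q_p(d))[2d]\;\simeq\;\R\uHom_{\sd(\Q_{p,\Box})}\big(\rg_{\proeet}(Y_C,\Q_p),\Q_p\big),$$
that is, with internal $\R\uHom$ taken in solid $\Q_p$-modules. Remark~\ref{poin7}(1) says explicitly that this form of duality fails in the condensed setting ``since there are too many extensions between Banach spaces.'' The duality that is actually a theorem (Conjecture~\ref{poin5}, proved in~\cite{CGN1} and~\cite{ALBM}) is a statement about the presheaves ${\mathbb R}_{\proeet}$, ${\mathbb R}_{\proeet,c}$ with $\R\Hhom_{\rm TVS}$ taken in the rigidified TVS category, where the crucial extension class ${\mathcal Ext}^1_{\rm TVS}({\mathbb G}_a,\Q_p)\simeq{\mathbb G}_a$ lives; that $\Ext$-computation does not transport to $\sd(\Q_{p,\Box})$. (Also, Mann and Zavyalov only handle the proper case; the partially proper case is exactly Conjecture~\ref{poin5}.) Consequently your biduality map $M\otimes^{\LL_{\Box}}_{\Q_p}\bpdr\to\R\uHom(\R\uHom(M,\Q_p),\bpdr)$, and the projection-formula identification $M\otimes^{\LL_{\Box}}_{\Q_p}\bpdr\simeq\rg_{\proeet}(Y_C,{\mathbb B}_{\rm pdR})$, both need to be formulated and proved in TVS (or, as in~\cite{ALBM,CGN1}, on the Fargues--Fontaine curve and then descended); as written in $\sd(\Q_{p,\Box})$ they are not the statements your argument needs. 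Even after moving to TVS, the biduality step for the infinite-rank cohomology of partially proper $Y$ is a serious claim in its own right, not a formal consequence of the finite-BC $\Ext$-computations.
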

\begin{remark}
(i) One can use Poincar\'e duality for de Rham cohomology to replace $H^n_{\rm dR}(Y_K)^\dual$
and $H^n_{\rm HK}(Y_C)^\dual$ by $H^{2d-n}_{\rm dR,c}(Y_K)$
and $H^{2d-n}_{\hk,c}(Y_C)$ in the statement of Conjecture \ref{cst1}, which gives it
a shape closer to that of Conjecture \ref{cst2}.

(ii) Implicit in the formulation of Conjecture \ref{cst2} is a choice of a suitable category of topological 
$\Q_p$-vector spaces with continuous action of $G_K$.
\end{remark}

  \subsubsection{Fundamental diagram}     
Let $Y$ be a smooth Stein rigid analytic  variety over $K$ of dimension $d$. In this case, under some restrictions, Theorem \ref{main-kg} yields  the following  result: 
 \begin{corollary} {\rm (Fundamental diagram, \cite[Cor.\,1.6]{AGN})}\label{affinoids-comp} 
Let $r\geq 0$. 
Then,  under a strong  assumption on the slopes of Frobenius on Hyodo-Kato cohomology (see \cite[Th. 8.4]{AGN} for details), there is a map of exact sequences of solid $\Q_p$-modules
{\small 
$$ \xymatrix@R=4mm@C=4mm{
0 \ar[r] & \big(H_c^d(Y_C, \Omega^{r-d-1})/ {\rm Ker}\, d\big)(d) \ar[r] \ar@{=}[d] & H_{\proeet,c}^{r}(Y_C, \Q_p(r)) \ar[d] \ar[r] & (H^{r}_{\hk,c}(Y_C) \otimes^{\Box}_{C^{\rm nr}} t^d \B_{\st}^+)^{\varphi=p^{r}, N=0} \ar[r] \ar[d] & 0 \\
0 \ar[r] & \big(H_c^d(Y_C, \Omega^{r-d-1})/ {\rm Ker}\, d\big)(d) \ar[r] & H_c^d(Y_C, \Omega^{r-d})^{d=0}(d)  \ar[r] & H^r_{\dr,c}(Y_C)(d) \ar[r] & 0 }
$$}
 \end{corollary}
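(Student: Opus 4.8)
The plan is to deduce the corollary from the compactly supported basic comparison theorem (Theorem~\ref{main-kg}), specialized to a Stein variety, in close parallel with the way the non-compact fundamental diagram (Theorem~\ref{affinoids}) is derived from Theorem~\ref{basic1}. First I would invoke the compactly supported analogue of Theorem~\ref{basic1} supplied by Theorem~\ref{main-kg}: for $n\geq r$ it produces a long exact sequence in $\sd(\Q_{p,\Box})$
$$\cdots\to X^{r,n-1}_c\to {\rm DR}^{r,n-1}_c\to H^n_{\proeet,c}(Y_C,\Q_p(r))\to X^{r,n}_c\to {\rm DR}^{r,n}_c\to\cdots,$$
where $X^{r,i}_c=(H^i_{\hk,c}(Y_C)\otimes^{\Box}_{C^{\rm nr}}t^d\bst^+)^{N=0,\varphi=p^r}$ and ${\rm DR}^{r,i}_c=H^i\big((\rg_{\dr,c}(Y_K)\otimes^{\Box}_K\bdr^+)/{\rm Fil}^r\big)$; the factor $t^d$ and the $(d)$-twists that appear in the statement are the shadow of the Poincar\'e-duality shift intrinsic to compact supports, and everything reduces to evaluating these terms for a Stein variety and checking that the sequence collapses around $n=r$.

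Next I would compute the de Rham side. Since $Y$ is Stein of dimension $d$, Serre duality forces $H^i_c(Y_C,\sff)=0$ for $i\neq d$ and every coherent sheaf $\sff$; hence $\rg_{\dr,c}(Y_C)$ is represented by the single complex $[H^d_c(\Omega^0)\to\cdots\to H^d_c(\Omega^d)]$ in cohomological degrees $d,\dots,2d$, and each Hodge-filtered subcomplex degenerates in the same way. Plugging this in, the quotient complex $(\rg_{\dr,c}(Y_K)\otimes^{\Box}_K\bdr^+)/{\rm Fil}^r$ is supported in degrees $[\,d,\min(d+r-1,2d)\,]$, so ${\rm DR}^{r,i}_c=0$ outside this range, and a short computation --- isolating the top graded piece ${\rm gr}^d\bdr^+\simeq C(d)$ of the relevant $\bdr^+/{\rm Fil}$-coefficients --- identifies the cokernel of $X^{r,r-1}_c\to {\rm DR}^{r,r-1}_c$ with $\big(H^d_c(Y_C,\Omega^{r-d-1})/\kker d\big)(d)$. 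This produces the left-hand term of the top row, together with the bottom row, which is just the tautological ``coboundaries $\hookrightarrow$ cocycles $\twoheadrightarrow$ cohomology'' sequence $0\to(H^d_c(Y_C,\Omega^{r-d-1})/\kker d)(d)\to H^d_c(Y_C,\Omega^{r-d})^{d=0}(d)\to H^r_{\dr,c}(Y_C)(d)\to 0$ attached, via the degree-$d$ concentration, to $\rg_{\dr,c}(Y_C)$ and twisted by $(d)$.

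Then I would handle the Hyodo--Kato side, and this is where the slope hypothesis of \cite[Th.\,8.4]{AGN} is used. Compactly supported Hyodo--Kato cohomology of a Stein variety does not vanish in high degrees, and the $t^d$-twist raises the slopes of $\varphi$ by $d$; one therefore imposes a lower bound on the slopes of $\varphi$ on the $H^i_{\hk,c}(Y_C)$ precisely so that $X^{r,i}_c$ vanishes for $i>r$ --- using that $(\bst^+)^{N=0,\varphi=p^s}=0$ for $s<0$ --- so that $X^{r,r}_c$ equals $(H^r_{\hk,c}(Y_C)\otimes^{\Box}_{C^{\rm nr}}t^d\bst^+)^{\varphi=p^r,N=0}$ on the nose, and so that the arrow $X^{r,r}_c\to {\rm DR}^{r,r}_c$ in the long exact sequence is zero --- the last playing here the role of the automatic vanishing ${\rm DR}^{r,r}=0$ used in the proof of Theorem~\ref{affinoids}. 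With this, the long exact sequence collapses at $n=r$ to the short exact top row
$$0\to\big(H^d_c(Y_C,\Omega^{r-d-1})/\kker d\big)(d)\to H^r_{\proeet,c}(Y_C,\Q_p(r))\to (H^r_{\hk,c}(Y_C)\otimes^{\Box}_{C^{\rm nr}}t^d\bst^+)^{\varphi=p^r,N=0}\to 0.$$
The vertical maps are the identity on the left, $\iota_{\hk}\otimes\theta$ on the right, and in the middle the period morphism landing in the de Rham side; commutativity is inherited from the compactly supported analogue of the map of long exact sequences displayed before Conjecture~\ref{cst1}, while strictness, closedness of images, and the solid-module structure are carried over from Theorem~\ref{main-kg}.

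The main obstacle is the Hyodo--Kato step. In the non-compact fundamental diagram the counterpart ${\rm DR}^{r,r}$ vanishes for dimension reasons, so the long exact sequence splits for free once ${\rm DR}^{r,r-1}$ is computed; after passing to compact supports and twisting by $t^d$ the Frobenius slopes land in a range where nothing is automatic, and one genuinely needs a hypothesis bounding the slopes of $\varphi$ on $H^\bullet_{\hk,c}(Y_C)$, both to kill $X^{r,i}_c$ for $i>r$ and to annihilate $X^{r,r}_c\to {\rm DR}^{r,r}_c$. A secondary, purely bookkeeping, difficulty is keeping the Tate twists and the $t^d$-factor straight, so that it is exactly $H^r_{\dr,c}(Y_C)(d)$ --- and no other twist --- that occupies the bottom-right corner.
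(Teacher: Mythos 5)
Your plan --- start from the compactly supported analogue of the basic comparison theorem (Theorem~\ref{main-kg}), exploit that for a Stein $Y$ of dimension $d$ the coherent compactly supported cohomology is concentrated in degree $d$ so that $\rg_{\dr,c}(Y_C)$ and its Hodge pieces are computed by the complex $H^d_c(\Omega^\jcdot)$ in degrees $[d,2d]$, and then use the Frobenius slope hypothesis to make the long exact sequence collapse at $n=r$ --- is precisely the route of \cite[Th.\,8.4]{AGN} that this survey is citing, and mirrors the deduction of Theorem~\ref{affinoids} from Theorem~\ref{basic1}. So the approach is the same as the paper's.

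A caveat on the last step, which you yourself flag as the main obstacle: what is actually needed is that $\kker\bigl(X^{r,r}_c\to{\rm DR}^{r,r}_c\bigr)$ identifies with $(H^r_{\hk,c}(Y_C)\otimes^{\Box}_{C^{\rm nr}}t^d\bst^+)^{\varphi=p^r,N=0}$, and your assertion that the slope bound forces the whole arrow $X^{r,r}_c\to{\rm DR}^{r,r}_c$ to vanish is one (sufficient) way to get this; it should be verified rather than stipulated. This is genuinely nontrivial because, unlike the non-compact Stein case where ${\rm DR}^{r,r}=0$ for free (the $r$-th term of $(\rg_{\dr}\otimes\bdr^+)/{\rm Fil}^r$ is $\Omega^r\otimes\bdr^+/t^0=0$), here ${\rm DR}^{r,r}_c$ involves $H^d_c(\Omega^{r-d})\otimes\bdr^+/t^d$, which is nonzero for $d\le r\le 2d$. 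Similarly, the identification of $\ccoker(X^{r,r-1}_c\to{\rm DR}^{r,r-1}_c)$ with $(H^d_c(\Omega^{r-d-1})/\kker d)(d)$ is more than just isolating a top graded piece: the terms $((\rg_{\dr,c}\otimes\bdr^+)/{\rm Fil}^r)^i$ carry an increasing number of $\bdr^+/t^k$-factors as $i$ drops, and one has to track the image of $X^{r,r-1}_c$ through them to get ``$/\kker d$'' rather than ``$/\im d$''. Your description is plausible and consistent with the $\mathbb A^d$ sample computation given after Theorem~\ref{main-kg}, but the bookkeeping of the Tate twist $(d)$ and the $t^d$-factor is exactly where the actual content of \cite[Th.\,8.4]{AGN} lives, and the proposal treats it as a routine computation without carrying it out.
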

The Frobenius slope condition above is very restrictive  but it holds for tori and Drinfeld spaces,
for example (probably also for general $p$-adic period domains, 
but not for the higher levels of the Drinfeld tower),
 so we can use the fundamental diagram to compute compactly supported cohomology in these cases.
 Recall that for the usual cohomology we have a fundamental diagram with no restrictions on the slopes.
 This means that the authors of \cite{CDN1} got very lucky that they started computations with the usual cohomology and not with the compactly supported one as is customary  in the local Langlands program.

\section{Geometrization of the basic comparison theorem}\label{geo1} 
 A crucial ingredient in the proof of claim (ii) of Theorem \ref{back1} 
 is the fact that the basic comparison
theorem (see Theorem \ref{basic1}) can be geometrized, i.e.,  that all the terms in the exact sequence \eqref{basic2} can be turned into presheaves of topological
$\Q_p$-vector spaces on the category of perfectoid affinoid spaces over $C$ (TVS for short).
 This geometrization is also essential for  the study of dualities (see Section \ref{poin4}).
\subsection{BC's, qBC's,  and TVS's}\label{geo2}
Basic examples of TVS's are:

$\bullet$ ${\Q_p}$, sending a perfectoid space $S$ to ${\cal C}(\pi_0(S),\Q_p)$,

$\bullet$ ${\mathbb G}_a$, sending $S$ to $\so(S)$.

\subsubsection{BC's}\label{geo2.1}
We let BC be the smallest abelian subcategory of TVS stable under extensions and containing
${\Q_p}$ and ${\mathbb G}_a$ (this definition is taken from~\cite{lebras}, where
it is proven that it is equivalent to the original definition~\cite{CB,CF}).  
Typical objects in this category are constructed from
the period sheaves $\Bst^+$ and $\Bdr^+$: for example, $\Bdr^+/t^m$ is a BC. 
More general examples include:

$\bullet$
$X^r_{\rm dR}(M):=(\Bdr^+\otimes_K M)/{\rm Fil}^r$,
 if $M$ is a finite rank filtered $K$-module and $r\in\Z$ (this is a successive
extensions of ${\mathbb G}_a$'s),

$\bullet$ $X^r_{\rm st}(M):=(\Bst^+\otimes_{C^{\nr}} M)^{N=0,\varphi=p^r}$  
if $M$ is a finite rank $(\varphi,N,G_K)$-module over $C^{\rm nr}$, and $r\in\Z$.

\vskip2mm
There are two functions on the category of BC's, 
additive in exact sequences:  the {\it dimension} $\dim$ and  the {\it height} ${\rm ht}$,
characterized by the fact that $\dim{\mathbb G}_a=1$ and
$\dim {\Q_p}=0$, whereas ${\rm ht}\,{\mathbb G}_a=0$ and
${\rm ht}\, {\Q_p}=1$. We define {\it the Dimension} ${\rm Dim}({\mathbb W})$ of a BC ${\mathbb W}$
as the couple $({\rm dim}({\mathbb W}),{\rm ht}({\mathbb W}))$.

  For example we have (see~\cite[Ex.\,5.14]{CN1}, where the reader will find a (more complicated) 
formula with no restriction on $r$):

$\bullet$ ${\rm Dim}(X^r_{\rm dR}(M))=(r\,{\rm rk}(M)-t_H(M),0)$ 
if $r$ is~$\geq$ all the jumps
in the filtration, and $t_H(M)=\sum_{i\in \Z} i\,{\rm rk}({\rm gr}^iM)$
(the endpoint of the Hodge polygon of $M$),

$\bullet$ ${\rm Dim}(X^r_{\rm st}(M))=(r\,{\rm rk}(M)-t_N(M),{\rm rk}(M))$
if $r\geq$ all the valuations of eigenvalues of $\varphi$ and $t_N(M)$ is the sum
of these valuations (the endpoint of the Newton polygon of $M$).

\subsubsection{qBC's}
 If $W$ is a topological\footnote{We are purposefully vague here concerning the meaning of "topological". Let us just mention that, for the theory of qBC's to work well, one needs  to puts 
a restriction of the type of topological spaces allowed.}
 $C$-vector space, $S\mapsto \so(S)\otimes^{\Box}_{C} W$ defines a TVS 
${\mathbb G}_a\otimes^{\Box}_{C} W$.
More generally, if $W$ is a topological $\bdr^+/t^m$-module, 
$S\mapsto (\Bdr^+(S)/t^m)\otimes^{\Box}_{\bdr^+/t^m} W$ defines a TVS 
$(\Bdr^+/t^m)\otimes^{\Box}_{\bdr^+/t^m} W$.

We define a Topological $\Bdr^+$-Module to be a TVS of this form.
And we define~\cite{CN5} a qBC to be a TVS having a finite filtration whose
associated graded pieces are BC's or Topological $\Bdr^+$-Modules.

The height of a qBC is still defined and additive in exact sequences, 
but its dimension can be infinite.

\subsubsection{TVS's}
To get the right $\Ext$-groups between BC's in the TVS category (see Section \ref{poin2}) the category TVS needs to appropriately modified (see \cite{TVS}).
 For one, one needs to restrict the test objects to strictly totally disconnected spaces.
 But also all the presheaves need to be topologically enriched\footnote{Actually this is already the case for the qBC's but we ignored it.}, which means that the mapping spaces between affinoid perfectoids are equipped with a topology and the presheaf functors are sensitive to this topology.
 This rigidifies the topological sheaves enough for them to reflect well the category of BC's, that is, the canonical functor from BC's to TVS's is fully faithful (in the derived sense). 

\subsection{Geometrization}\label{geo3}
The rings of periods are naturally TVS's $S\mapsto \Bdr^+(S)$ and $S\mapsto\Bst^+(S)$,
which gives rise to the TVS's ${\mathbb X}^{r,i}$ and ${\mathbb{DR}}^{r,i}$.
To geometrize $H^n_{\rm proet}(Y_C,\Q_p(r))$ we just have to consider\footnote{The idea
is that $H^n_{\rm proet}(Y_S,\Q_p(r))$ ``should'' not depend on $S$ (at least if $\pi_1(S)$
is trivial; that would indeed be the case over the complex numbers). This is of course already false
for the open unit disk as we have mentioned but the content of Theorem~\ref{geo4} 
and Proposition~\ref{geo5} is that there is a constant ``reasonable'' object that controls
$H^n_{\rm proet}(Y_S,\Q_p(r))$ when $S$ varies.}
the presheaf $\mathbb{H}^n_{\rm proet}(Y_C,\Q_p(r)): S\mapsto H^n_{\rm proet}(Y_S,\Q_p(r))$, where the pro-\'etale cohomology is equipped with its canonical topology.
\begin{theorem}\label{geo4}
{\rm (Geometrization of the basic comparison theorem~\cite{CN4})} Let $r\geq 0$. Let $Y$ be a smooth partially proper rigid analytic space over $K$. 
We have an exact sequence of TVS's (i.e.,  the sequence is exact, when evaluated on any
perfectoid affinoid space over $C$)
\begin{equation}\label{basic3}
\cdots\to {\mathbb X}^{r,n-1}\to {\mathbb{DR}}^{r,n-1}
\to {\mathbb H}^n_{\rm proet}(Y_C,\Q_p(r))\to
{\mathbb X}^{r,n}\to {\mathbb{DR}}^{r,n}\to\cdots
\end{equation}
\end{theorem}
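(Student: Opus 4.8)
The plan is to relativize, over an arbitrary perfectoid affinoid $S$ over $C$, the chain of quasi-isomorphisms that yields the basic comparison theorem (Theorem~\ref{basic1}), and to observe that every step is functorial in $S$. By definition, exactness of \eqref{basic3} as a sequence of TVS's means precisely that for each such $S$ the sequence \eqref{basic2} holds for $Y_S:=Y_C\times_C S$ in place of $Y_C$, with the absolute period rings $\bst^+,\bdr^+$ replaced by their relative counterparts $\Bst^+(S),\Bdr^+(S)$ and $H^n_{\proeet}(Y_C,\Q_p(r))$ replaced by $H^n_{\proeet}(Y_S,\Q_p(r))$ --- together with the statement that the arrows are morphisms of presheaves. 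Since the period sheaves, the Poincar\'e lemmas, and the period morphisms $\alpha_{\rm FM},\alpha_{\rm BK}$ underlying Theorem~\ref{basic1} are all sheaf-theoretic on the (pro-)\'etale and syntomic sites, functoriality in $S$ will be automatic; so it suffices to establish the $S$-relative analogues of Theorem~\ref{kol11} and Theorem~\ref{ht9}, to form the relative Bloch--Kato syntomic mapping fibre over $S$, and to pass to its long exact sequence, which in the range $n\ge r$ is insensitive to the truncation $\tau_{\leq r}$.

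\textbf{Relative comparison with syntomic cohomology.} First I would prove the $S$-relative version of Theorem~\ref{CGN21}, and hence of Theorem~\ref{kol11}: for $I\subset(0,\infty)$ a compact interval with rational endpoints, $\tau_{\leq r}\rg_{\proeet}(Y_S,{\mathbb B}_I)$ should be computed by the right-hand side of \eqref{CGN2} with the absolute period rings $\B_{I,\log}$ and $\bdr^+$ replaced by their relative-over-$S$ counterparts (the period rings of the relative Fargues--Fontaine objects over $S$), while $\rg_{\hk}(Y_C)$ --- which depends only on a model of $Y$, not on the test object $S$ --- is unchanged. The proof of Theorem~\ref{CGN21} is a computation local on $Y$, following the steps recalled in Section~\ref{slonce1}: the $K(\pi,1)$-lemma replaces $\rg_{\proeet}$ by continuous cohomology of $\pi_1$; mixed-characteristic Artin--Schreier passes to a (perfectoid) period sheaf; almost purity reduces $\pi_1$ to a commutative $p$-adic Lie group $\Gamma\simeq\Z_p^d$ depending only on the coordinates of $Y$; and decompletion together with Lazard's theorems turn Lie-algebra cohomology into the de Rham (Koszul) complex with $d$ multiplied by $t$. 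None of these steps sees the base $C$ except through the coefficient ring, so each should go through verbatim over the relative period sheaves ${\mathbb B}_I$ on the pro-\'etale site of $Y_S$; in particular the $(\varphi,\Gamma)$-module computations of \cite{CN1,SG} and the all-coordinates globalization of \cite{BMS} are unaffected. Feeding this into the pro-\'etale fundamental sequence $0\to\Q_p(r)\to{\mathbb B}_I(r)\stackrel{1-\phi}{\lra}{\mathbb B}_{I'}(r)\to 0$ (with $I,I'$ chosen so that $t$ has one zero on $\Spa(\B_I)$ and is a unit in $\B_{I'}$) will show that $\alpha_{\rm FM}$ for $Y_S$ is a quasi-isomorphism after $\tau_{\leq r}$.

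\textbf{Relative K\"unneth, rigidity, and assembly.} Next I would establish the $S$-relative form of Theorem~\ref{ht9}. For the K\"unneth statement, one reinterprets the absolute crystalline cohomology of the semistable hypercovering as Hodge-completed derived de Rham cohomology and uses that $\Bdr^+(S)/{\rm Fil}^r$ is the Hodge-completed derived de Rham cohomology of the integral structure of $S$ (the relative analogue of $\bdr^+/{\rm Fil}^r\simeq\rg_{\dr}(\so_{\C_p})/{\rm Fil}^r$); the K\"unneth formula for derived de Rham cohomology then gives $\rg_{\rm cris}({\cal Y}_S)/{\rm Fil}^r\simeq(\rg_{\dr}(Y_K)\otimes^{\LL_{\Box}}_K\Bdr^+(S))/{\rm Fil}^r$. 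For Hyodo--Kato rigidity, the Frobenius- and monodromy-compatible section of $\rg_{\rm cris}({\cal Y})\to\rg_{\hk}(Y_C)$ lives over $C^{\nr}$ (equivalently $\breve C$) and does not see $S$, so tensoring it up by $\Bst^+(S)$ gives $\rg_{\rm cris}({\cal Y}_S)\simeq[\rg_{\hk}(Y_C)\otimes^{\LL_{\Box}}_{C^{\nr}}\Bst^+(S)]^{N=0}$. Combining the two, $\alpha_{\rm BK}$ identifies $\tau_{\leq r}\rg_{\proeet}(Y_S,\Q_p(r))$ with $\tau_{\leq r}$ of the mapping fibre
\[
\rg^{\rm BK}_{\synt}(Y_S,\Q_p(r))=\big[\,[\rg_{\hk}(Y_C)\otimes^{\LL_{\Box}}_{C^{\nr}}\Bst^+(S)]^{N=0,\varphi=p^r}\stackrel{\iota_{\hk}}{\lra}(\rg_{\dr}(Y_K)\otimes^{\LL_{\Box}}_K\Bdr^+(S))/{\rm Fil}^r\,\big],
\]
whose long exact sequence in degrees $n\ge r$ is \eqref{basic2} for $Y_S$, i.e.\ \eqref{basic3} evaluated at $S$; taking $H^i$ of the two legs of this fibre recovers exactly ${\mathbb X}^{r,i}(S)$ and ${\mathbb{DR}}^{r,i}(S)$. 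All of this is functorial in $S$, so \eqref{basic3} is a complex of presheaves; one then checks that it lands in the correctly rigidified, topologically enriched TVS category of \cite{TVS} (test objects strictly totally disconnected), so that it is genuinely a sequence of TVS's. Finally, for $Y$ partially proper but not quasi-compact, I would exhaust $Y$ by quasi-compact admissible (dagger) opens and pass to the limit, the needed $\R\lim$-vanishing being controlled by partial properness as in the proof of Theorem~\ref{basic1}.

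\textbf{The main obstacle.} The hard part will be \emph{uniformity in $S$}: running the decompletion and the $(\varphi,\Gamma)$-module computations over the relative period rings while keeping control of the solid completed tensor products $\otimes^{\LL_{\Box}}$, and --- more delicate still --- verifying that the resulting quasi-isomorphisms are not merely pointwise on each $S$ but assemble into isomorphisms of presheaves on perfectoid affinoids over $C$ that land in the rigidified TVS category of \cite{TVS}, in which ``exact sequence of TVS's'' carries the intended force (the naive TVS category would not suffice). A secondary difficulty is the $\R\lim$ passage for non-quasi-compact partially proper $Y$, where a Mittag--Leffler condition must be checked degree by degree; the derived formulation of \eqref{basic3} should keep this manageable, in contrast to the non-derived ${\rm C}_{\rm st}$ statement, where analogous $\R\lim$ problems are genuinely obstructive.
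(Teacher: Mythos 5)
Your proposal is correct and follows essentially the same route as the paper (and as \cite{CN4}, to which the theorem is attributed): relativize the chain of quasi-isomorphisms proving Theorem~\ref{basic1} over a perfectoid affinoid test object $S$, observe that $\rg_{\hk}(Y_C)$ and $\rg_{\dr}(Y_K)$ are independent of $S$ while the period rings become their relative-over-$S$ avatars $\Bst^+(S)$, $\Bdr^+(S)$, and pass to the long exact sequence of the relative Bloch--Kato syntomic mapping fibre. One small caveat: for Theorem~\ref{geo4} as stated, whose conclusion is explicitly just objectwise exactness on perfectoid affinoids over $C$, the naive TVS category suffices, so your worry about landing in the rigidified category of \cite{TVS} is not an obstacle here --- that refinement is only needed later for the $\Ext$-computations underlying the duality in Section~\ref{poin1}.
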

The TVS's appearing in the sequence \eqref{basic3} have special properties;
indeed, we have (see~\cite[Cor.\,7.9]{CN5} for ${\mathbb H}^n_{\rm proet}$;
the other results are immediate): 
\begin{proposition} \label{geo5}
If $Y$ is quasi-compact or, more generally, if the de Rham cohomology of $Y$
is finite dimensional, then the exact sequence \eqref{basic3} is an
exact sequence of qBC's in which the ${\mathbb X}^{r,i}$ are BC's and the
${\mathbb{DR}}^{r,i}$'s are Topological $\Bdr^+$-Modules. 
\end{proposition}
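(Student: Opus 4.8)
The plan is to start from the exact sequence \eqref{basic3} of TVS's — which is handed to us by Theorem \ref{geo4} — and simply identify the three kinds of terms occurring in it; once that is done, ``exact sequence of qBC's'' follows formally. First I would reduce the quasi-compact case to the finite-dimensional one: a quasi-compact $Y$ equipped with an overconvergent structure has finite dimensional de Rham cohomology, and base change along the field extension $K\to C$ gives $H^i_{\rm dR}(Y_C)\simeq H^i_{\rm dR}(Y_K)\otimes_K C$, still finite dimensional; so I may assume throughout that $H^\bullet_{\rm dR}(Y_K)$ is finite dimensional over $K$. Then, granting that the ${\mathbb X}^{r,i}$ are BC's and the ${\mathbb{DR}}^{r,i}$ are Topological $\Bdr^+$-Modules, the middle term ${\mathbb H}^n_{\rm proet}(Y_C,\Q_p(r))$ sits in a short exact sequence of TVS's between a quotient of ${\mathbb{DR}}^{r,n-1}$ and a subobject of ${\mathbb X}^{r,n}$; since the class of qBC's is stable under subobjects, quotients and extensions inside TVS (\cite{CN5}, \cite{TVS}), this forces ${\mathbb H}^n_{\rm proet}$ to be a qBC as well. (Alternatively, for the middle term one can cite \cite[Cor.\,7.9]{CN5} directly.)

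For the ${\mathbb X}^{r,i}$ I would argue via Hyodo-Kato: the isomorphism $\iota_{\rm HK}\colon H^i_{\rm HK}(Y_C)\otimes^{\Box}_{C^{\rm nr}}C\xrightarrow{\ \sim\ }H^i_{\rm dR}(Y_C)$, together with faithfully flat descent along $C^{\rm nr}\to C$, forces $H^i_{\rm HK}(Y_C)$ to be a finite rank $(\varphi,N,G_K)$-module over $C^{\rm nr}$ (with, now, an honest smooth $G_K$-action). Since $C^{\rm nr}$ is a field, $\rg_{\rm HK}(Y_C)$ is a formal complex, so
\[
\rg_{\rm HK}(Y_C)\otimes^{\LL_{\Box}}_{C^{\rm nr}}\Bst^+\ \simeq\ \bigoplus_i\big(H^i_{\rm HK}(Y_C)\otimes^{\Box}_{C^{\rm nr}}\Bst^+\big)[-i],
\]
and applying $(-)^{N=0,\varphi=p^r}$ identifies ${\mathbb X}^{r,i}$ with the geometrized period space $X^r_{\rm st}(H^i_{\rm HK}(Y_C))$ of Section~\ref{geo2.1}, up to a possible contribution coming from the degree-$(i-1)$ summand (the operation $(-)^{N=0,\varphi=p^r}$ being only left exact) — a contribution which is itself a BC. By the discussion of Section~\ref{geo2.1} (see \cite{CB,CF} and \cite[Ex.\,5.14]{CN1}), $X^r_{\rm st}(M)$ is a BC for every finite rank $(\varphi,N,G_K)$-module $M$; hence ${\mathbb X}^{r,i}$ is a BC, with Dimension read off the Newton polygon of $H^i_{\rm HK}(Y_C)$ when $r$ is large.

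For the ${\mathbb{DR}}^{r,i}$ I would use that the Hodge filtration on $\rg_{\rm dR}(Y_K)$ is the brutal filtration on the de Rham complex, so that tensoring over the field $K$ with $\bdr^+$ and quotienting by ${\rm Fil}^r$ is a strict, nonderived operation: $(\rg_{\rm dR}(Y_K)\otimes^{\LL_{\Box}}_K\bdr^+)/{\rm Fil}^r$ is the explicit complex
\[
\big[\ \Omega^0(Y_K)\otimes_K\bdr^+/t^r\ \to\ \Omega^1(Y_K)\otimes_K\bdr^+/t^{r-1}\ \to\ \cdots\ \to\ \Omega^{r-1}(Y_K)\otimes_K\bdr^+/t\ \to\ 0\ \big],
\]
whose terms, hence whose cohomology groups, are topological $\bdr^+/t^r$-modules. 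Its geometrization is obtained by replacing each $\bdr^+/t^m$ by $\Bdr^+/t^m$, which one rewrites as $(\Bdr^+/t^r)\otimes^{\Box}_{\bdr^+/t^r}(-)$ applied to the complex above; commuting this base change with $H^i$ — using flatness of $\Bdr^+/t^r$ over $\bdr^+/t^r$ and finite length of the cohomology over $\bdr^+/t^r$ — identifies ${\mathbb{DR}}^{r,i}$ with $(\Bdr^+/t^r)\otimes^{\Box}_{\bdr^+/t^r}{\rm DR}^{r,i}$, which is by definition a Topological $\Bdr^+$-Module (in fact here a successive extension of ${\mathbb G}_a$'s, i.e.\ a BC of Dimension $(r\cdot\dim_K H^i_{\rm dR}(Y_K)-t_H,0)$ for $r$ large). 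Combining the three identifications with Theorem \ref{geo4} gives the asserted exact sequence of qBC's.

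The main obstacle is not exactness — that comes from Theorem \ref{geo4} — but the interchange of cohomology with the left exact period operations $(-)^{N=0,\varphi=p^r}$ and $(-\otimes^{\LL_{\Box}}_K\bdr^+)/{\rm Fil}^r$ (and with evaluation on a perfectoid $S$): one must check that ${\mathbb X}^{r,i}$ and ${\mathbb{DR}}^{r,i}$ are the naive geometrized period spaces, not genuinely derived avatars smeared over two cohomological degrees. This is precisely where the finite-dimensionality hypothesis is essential: it makes $\rg_{\rm HK}(Y_C)$ (a complex over the field $C^{\rm nr}$) formal and $H^\bullet_{\rm dR}(Y_K)$ of finite length over $\bdr^+/t^r$, so that the relevant spectral sequences degenerate in the needed range and any residual derived correction remains inside the class of BC's — which is all the statement requires.
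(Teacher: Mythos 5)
Your proof is essentially the same as the paper's: the paper's entire argument for this Proposition is the parenthetical remark that the identifications of $\mathbb{X}^{r,i}$ and $\mathbb{DR}^{r,i}$ with geometrized period spaces are immediate once one knows that $H^\bullet_{\rm HK}(Y_C)$ is of finite rank and $H^\bullet_{\rm dR}(Y_K)$ is finite dimensional, together with the citation to \cite[Cor.\,7.9]{CN5} for the middle term. Two small trimmings would bring your argument into line with that. First, the detour through formality of $\rg_{\rm HK}(Y_C)$ and left-exactness of $(-)^{N=0,\varphi=p^r}$ is unnecessary: in Theorem~\ref{basic1}, $X^{r,i}$ is already \emph{defined} as $(H^i_{\rm HK}(Y_C)\otimes^{\Box}\bst^+)^{N=0,\varphi=p^r}$, i.e.\ directly as a period space attached to the cohomology group rather than as $H^i$ of the functor applied to the whole complex. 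The commutation issue you worry about is indeed nontrivial, but it is part of the proof of Theorem~\ref{basic1} in \cite{CN4} and is already packaged into the statement; here one only needs that $H^i_{\rm HK}(Y_C)$ is of finite rank over $C^{\rm nr}$ (which follows, as you say, from $\iota_{\rm HK}$ and finite-dimensionality of $H^i_{\rm dR}$), whence $\mathbb{X}^{r,i}=X^r_{\rm st}(H^i_{\rm HK}(Y_C))$ is a BC by the discussion in Section~\ref{geo2.1}. Similarly, for $\mathbb{DR}^{r,i}$ the ``finite length over $\bdr^+/t^r$'' is not needed: flatness of $\Bdr^+/t^r$ over $\bdr^+/t^r$ alone gives the commutation of the base change with cohomology, and the definition of Topological $\Bdr^+$-Module imposes no finiteness on the coefficient module. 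Second, your first route to $\mathbb{H}^n_{\rm proet}$ being a qBC — closure of qBC's under subobjects, quotients and extensions inside TVS — is not quite free: a subobject of a Topological $\Bdr^+$-Module is not obviously again one, so closure of qBC's under subobjects is not a formal consequence of the definition given here. The safe route, which both you and the paper adopt, is the direct appeal to \cite[Cor.\,7.9]{CN5}. With these points cleaned up, your proof coincides with the paper's.
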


\begin{proposition} \label{geo6}
{\rm(\cite[Prop.\,1.17]{CN5})}
If $Y$ is small {\rm(meaning that its de Rham cohomology is finite dimensional)}, then the 
${\rm C}_{\rm st}$-conjecture for $H^n_{\proeet}(Y_C,\Q_p(r))$ 
is equivalent to each of the following statements:

{\rm (i)} ${\rm ht}({\mathbb H}^n_{\proeet}(Y_C,\Q_p(r)))=\dim_K H^n_{\rm dR}(Y)$,

{\rm (ii)} $H^1(X_{\rm FF},{\cal E}(H^i_{\rm HK}(Y_C),H^i_{\rm dR}(Y)))=0$, for $i=n-1,n$,
where ${\cal E}(H^i_{\rm HK}(Y_C),H^i_{\rm dR}(Y))$ 
is the vector bundle of the Fargues-Fontaine curve $X_{\rm FF}$
associated to the $\varphi$-module $H^i_{\rm HK}(Y_C)$ modified at~$\infty$ by the
lattice corresponding to ${\rm Fil}^0(\bdr^+\otimes^{\Box}_K H^i_{\rm dR}(Y))$.
\end{proposition}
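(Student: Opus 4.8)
The plan is to turn Conjecture~${\rm C}_{\rm st}$ into a statement about the cohomology of a vector bundle on the Fargues--Fontaine curve, using that all the objects in the geometrized sequence~\eqref{basic3} are qBC's of a very rigid shape. Write ${\mathbb H}^n:={\mathbb H}^n_{\proeet}(Y_C,\Q_p(r))$ and, for $i=n-1,n$, let $C^i:=[\,{\mathbb X}^{r,i}\stackrel{f_i}{\to}{\mathbb{DR}}^{r,i}\,]$ be the two-term complex whose cohomology enters~\eqref{basic3}; it is a shift and twist of the Bloch--Kato syntomic complex of the filtered $\varphi$-module $(H^i_{\hk}(Y_C),H^i_{\dr}(Y))$. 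First I would record the shape of ${\mathbb H}^n$: by Theorem~\ref{geo4} and Proposition~\ref{geo5}, for $Y$ small~\eqref{basic3} is an exact sequence of qBC's in which ${\mathbb X}^{r,i}$ is a BC and ${\mathbb{DR}}^{r,i}$ is a finite-dimensional Topological $\Bdr^+$-Module, so extracting
$$0\longrightarrow \coker f_{n-1}\longrightarrow {\mathbb H}^n\longrightarrow \kker f_n\longrightarrow 0$$
and using that the height is additive on qBC's, vanishes on Topological $\Bdr^+$-Modules, and hence vanishes on all their subquotients (e.g.\ on $\im f_i$ and $\coker f_i$), one gets ${\rm ht}({\mathbb H}^n)={\rm ht}(\kker f_n)={\rm ht}({\mathbb X}^{r,n})$.

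Next I would invoke two inputs. The first is the dictionary between (q)BC's and coherent sheaves on the Fargues--Fontaine curve (Section~\ref{geo2.1} and the references there): $C^i$ computes $\R\Gamma(X_{\rm FF},{\cal E}(H^i_{\hk}(Y_C),H^i_{\dr}(Y)))$, the Tate twist by $r$ being absorbed by that of the filtered $\varphi$-module, so that $\kker f_i=H^0(X_{\rm FF},{\cal E}(H^i_{\hk}(Y_C),H^i_{\dr}(Y)))$ and $\coker f_i=H^1(X_{\rm FF},{\cal E}(H^i_{\hk}(Y_C),H^i_{\dr}(Y)))$. The second is that the Bloch--Kato syntomic complex of~\eqref{BK1} is the homotopy fibre product of ${\rm Fil}^r(\rg_{\dr}(Y)\otimes^{\Box}_K\bdr^+)$ with $[\rg_{\hk}(Y_C)\otimes^{\Box}_{C^{\rm nr}}\bst^+]^{N=0,\varphi=p^r}$ over $\rg_{\dr}(Y)\otimes^{\Box}_K\bdr^+$, and that its $H^n$ is $H^n_{\proeet}(Y_C,\Q_p(r))$ in the range under consideration. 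Running the corresponding Mayer--Vietoris long exact sequence, the bicartesian-square condition of Conjecture~\ref{cst1} at level $n$ becomes the surjectivity of $f_{n-1}$ and $f_n$; combined with the implication~{\rm (i)$\Rightarrow$(ii)} of Theorem~\ref{back1}, this gives that ${\rm C}_{\rm st}$ for $H^n_{\proeet}(Y_C,\Q_p(r))$ is equivalent to $\coker f_{n-1}=\coker f_n=0$, i.e.\ to the vanishing of $H^1(X_{\rm FF},{\cal E}(H^i_{\hk}(Y_C),H^i_{\dr}(Y)))$ for $i=n-1,n$ --- which is statement~(ii).

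Finally I would compare~(i) with~(ii). Writing ${\cal E}_n$ for ${\cal E}(H^n_{\hk}(Y_C),H^n_{\dr}(Y))$, the identity ${\rm ht}({\mathbb H}^n)={\rm ht}({\mathbb X}^{r,n})={\rm ht}(H^0(X_{\rm FF},{\cal E}_n))$ together with the formula for ${\rm Dim}\,X^r_{\rm st}(-)$ of Section~\ref{geo2.1} and the Hyodo--Kato isomorphism (which gives $\rank_{C^{\rm nr}}H^n_{\hk}(Y_C)=\dim_KH^n_{\dr}(Y)=\rank{\cal E}_n$) shows that ${\rm ht}({\mathbb H}^n)$ is the rank of the non-negative-slope part of ${\cal E}_n$; hence~(i), i.e.\ ${\rm ht}({\mathbb H}^n)=\dim_KH^n_{\dr}(Y)$, holds iff ${\cal E}_n$ has no negative slope, iff $H^1(X_{\rm FF},{\cal E}_n)=0$. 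A short degree count, using that the jumps of the Hodge filtration on $H^{n-1}_{\dr}(Y)$ lie in $[0,n-1]$, then shows that in the relevant range of $r$ (equivalently, after enlarging $r$ and compensating by the Tate twist) the bundle ${\cal E}(H^{n-1}_{\hk}(Y_C),H^{n-1}_{\dr}(Y))$ automatically has no negative slope, so its $H^1$ vanishes for free and~(ii) reduces to the case $i=n$, namely~(i).

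The hard part will be the Fargues--Fontaine dictionary: one must pin down the normalization of ${\cal E}(H^i_{\hk}(Y_C),H^i_{\dr}(Y))$ --- that the modification at $\infty$ is by exactly the lattice ${\rm Fil}^0(\bdr^+\otimes^{\Box}_KH^i_{\dr}(Y))$, and in the correct direction --- and check that it is compatible with the descriptions of ${\mathbb X}^{r,i}$ and ${\mathbb{DR}}^{r,i}$ as BC's. The other delicate point is that the bicartesian-square condition of Conjecture~\ref{cst1} is about \emph{cohomology groups}, not complexes (for which it is automatic), so its real content is the vanishing of the Mayer--Vietoris connecting maps; the proposition says precisely that these are controlled by $H^1$ of the above bundles on $X_{\rm FF}$, which is also why one needs the category~${\rm TVS}$ to be rigidified (Section~\ref{geo2}) so that the functor from BC's is fully faithful and these groups are faithfully seen by ${\rm ht}$ and ${\rm Dim}$.
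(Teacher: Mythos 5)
The paper does not reprove Proposition~\ref{geo6}, it cites~\cite[Prop.\,1.17]{CN5}, so there is no ``paper's own proof'' to match your argument against; I can only assess it on its own terms. Your overall strategy --- read~\eqref{basic3} on the Fargues--Fontaine curve, identify $\kker f_n$ and $\coker f_{n-1}$ with $H^0(X_{\rm FF},{\cal E}_n)$ and $H^1(X_{\rm FF},{\cal E}_{n-1})$, and then argue by heights --- is the right one, but two of your intermediate steps are false, and they happen to cancel out: the conclusion is right but the proof is not.

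The first gap is the claim that the height ``vanishes on all subquotients of Topological $\Bdr^+$-Modules, e.g.\ on $\im f_i$ and $\coker f_i$.'' This is simply not true: $\Q_p\hookrightarrow{\mathbb G}_a=(\Bdr^+/t)\otimes_{\bdr^+/t}C$ is a sub-qBC of a Topological $\Bdr^+$-Module of height~$1$, and dually ${\mathbb G}_a/\Q_p$ is a quotient of height~$-1$. So you cannot conclude ${\rm ht}({\mathbb H}^n)={\rm ht}(\kker f_n)={\rm ht}({\mathbb X}^{r,n})$. What additivity actually gives is
$$
{\rm ht}({\mathbb H}^n)={\rm ht}(\coker f_{n-1})+{\rm ht}(\kker f_n)
={\rm ht}\big(H^1(X_{\rm FF},{\cal E}_{n-1})\big)+{\rm ht}\big(H^0(X_{\rm FF},{\cal E}_n)\big),
$$
and ${\rm ht}(H^1)\leq 0$ while ${\rm ht}(H^0)\leq{\rm rank}$ for a bundle on $X_{\rm FF}$, both with equality exactly when the bundle has no strictly negative HN slopes, i.e.\ $H^1=0$. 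Combined with $\chi_{\rm ht}({\cal E}_n)={\rm rank}({\cal E}_n)=\dim_KH^n_{\rm dR}(Y)$ (via $\iota_{\hk}$), this is the whole content of (i)$\Leftrightarrow$(ii): ${\rm ht}({\mathbb H}^n)\leq\dim_KH^n_{\rm dR}(Y)$ with equality iff $H^1({\cal E}_{n-1})=H^1({\cal E}_n)=0$. The nonzero height of $\coker f_{n-1}$, which you dismiss, is precisely why condition (ii) must be imposed at \emph{both} $i=n-1$ and $i=n$.

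The second gap, your final paragraph, is the opposite face of the same error: you assert that ``in the relevant range of $r$'' the bundle ${\cal E}(H^{n-1}_{\hk},H^{n-1}_{\dR})$ automatically has no negative slope, so that (ii) reduces to $i=n$. If that were true the proposition would not state the $i=n-1$ clause. Negative slopes of the modified bundle are exactly the failure of weak admissibility of the filtered $\varphi$-module $(H^{n-1}_{\hk},H^{n-1}_{\dR})$, and ruling that out is the content of ${\rm C}_{\rm st}$ at level $n-1$; there is no elementary degree count that makes it automatic. Increasing $r$ only shifts the unmodified $\varphi$-slopes relative to the Hodge jumps of $\bdr^+/t^r$, but the modification by ${\rm Fil}^0(\bdr^+\otimes_K H^{n-1}_{\rm dR})$ is twist-invariant, so the HN slopes of ${\cal E}_{n-1}$ do not change. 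Once you drop both false claims and run the height count above, together with your (correct) identification of the bicartesian condition with $\coker f_{n-1}=\coker f_n=0$ via the Mayer--Vietoris sequence (which does need the degeneration of the Hodge--de Rham filtration on $\rg_{\rm dR}(Y)\otimes_K\bdr^+$ to make the connecting maps to ${\rm Fil}^r$ vanish --- worth stating explicitly), the three statements fall out together.
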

Criterium (i) is a bit surprising, 
considering how big  ${\mathbb H}^n_{\proeet}(Y_C,\Q_p(r))$ is in general.
Criterium (ii) is very useful for checking cases of the ${\rm C}_{\rm st}$-conjecture.

\section{Poincar\'e duality}\label{poin1}
As we mentioned in Section~\ref{sad1}, the existence of a Poincar\'e duality for 
geometric $p$-adic pro\-'etale cohomology of nonproper
analytic varieties looked rather problematic. Still,  the computations
we have done, assuming that one can extract
the arithmetic cohomology from the geometric one using Hochschild-Serre spectral sequence,
pointed strongly towards the existence of a usual (topological)
duality for arithmetic $p$-adic pro-\'etale cohomology.
It made us look harder for a duality in the geometric case.  The key obstacle was to make
``the $\Q_p$-dual of $C$'' be $C$, which is clearly not possible.
But we realized that, interpreting
$C$ as the $C$ points of ${\mathbb G}_a$ (a natural thing to do, considering
Theorem~\ref{geo4}), we could interpret the $\Q_p$-dual of $C$ as being
${\rm Ext}^1_{\rm BC}({\mathbb G}_a,\Q_p)$ which is indeed equal to $C$
(this computation is a crucial ingredient in the
early theory of BC's~\cite[Prop.\,9.16]{CB}). This led us to Conjecture~\ref{poin5} below~\cite{COB2}.

\subsection{$\Ext$-groups} \label{poin2} 
The internal
$\Ext$-groups between BC's, in the TVS category, can be computed using the following results:
\begin{align}\label{slonce2}
&{{\Hhom}}_{\rm TVS}({\Q_p},{\Q_p})\simeq {\Q_p},
&&{{\Hhom}}_{\rm TVS}({\Q_p},{\mathbb G}_a)\simeq {\mathbb G}_a,\\ \notag
&{{\Hhom}}_{\rm TVS}({\mathbb G}_a,{\Q_p})= 0,
&&{\Hhom}_{\rm TVS}({\mathbb G}_a,{\mathbb G}_a)\simeq {\mathbb G}_a,\\ \notag
&{{\mathcal Ext}}^1_{\rm TVS}({\Q_p},{\Q_p})=0,
&&{{\mathcal Ext}}^1_{\rm TVS}({\Q_p},{\mathbb G}_a)=0,\\ \notag
&{{\mathcal Ext}}^1_{\rm TVS}({\mathbb G}_a,{\Q_p})\simeq {\mathbb G}_a,
&&{{\mathcal Ext}}^1_{\rm TVS}({\mathbb G}_a,{\mathbb G}_a)\simeq {\mathbb G}_a. 
\end{align}
And ${{\mathcal Ext}}^i_{\rm TVS}(W_2,W_1)=0$, for all $i\geq 2$ and all BC's $W_1,W_2$.
The nontrivial ${{\mathcal Ext}}^1$'s above are generated by the classes of the extensions
\begin{align*}
 & 0\to\Q_p t\to(\Bcris^+)^{\varphi=p}\to{\mathbb G}_a\to 0,\\
 & 0\to {\mathbb G}_a t\to \Bdr^+/t^2\to {\mathbb G}_a\to 0
 \end{align*}
\begin{remark}
\begin{enumerate}[leftmargin=*]
\item The above computations are the same when done  in the BC-category but the vanishing
of ${\mathcal Ext}^i$'s, for $i\geq 2$, in the BC-category is elementary contrary to 
the vanishing in the TVS-category.
\item The computation of  $\Hhom$'s and ${\mathcal Ext}^1$'s in \eqref{slonce2} was already done in \cite{CB}. The vanishing of the higher ${\mathcal Ext}$'s was proved by Ansch\"utz-Le Bras in \cite{ALB} in the category of pro-\'etale $\Q_p$-sheaves (with no objectwise topology) by a reduction, via MacLane-Breen  resolutions, to an analogous vanishing result  of Breen in characteristic $p$. 
The vanishing in the TVS-category is deduced in \cite{TVS}  from that via a fully-faithfulness result (for the functor from the category of pro-\'etale $\Q_p$-sheaves  to TVS's). 
\end{enumerate}
\end{remark}
\subsection{Geometric Duality}\label{poin4}   If $Y$ is a smooth proper rigid analytic variety over $K$,  Mann \cite{Mann} and Zavyalov \cite{Zav} have proved that the $p$-adic pro-\'etale cohomology of $Y_C$ satisfies Poincar\'e duality akin to the one for algebraic varieties. We note that in this case the cohomology groups have finite rank over $\Q_p$. We know now that we also have a Poincar\'e duality for smooth partially proper rigid analytic varieties over $K$ but now: (1) the cohomology groups are infinite dimentional, (2) there are, in general,  
nontrivial $\Ext$-groups hence we get a genuine Verdier-type duality.

\subsubsection{Heuristic: The $p$-adic unit disk} \label{poin6} The initial evidence that Poincar\'e duality should hold (and its precise shape) came from the following heuristic computations. 

 Let $Y$ be the open unit disk over $K$. From the fundamental diagram (see Theorem \ref{affinoids} and Corollary \ref{affinoids-comp}) we get the  exact sequence of solid $\Q_p$-modules
\begin{align*}
& 0\to \so(\partial Y_C)/\so(Y_C)\to H^2_{\proeet,c}(Y_C,\Q_p(1))\to \Q_p\to 0
\end{align*}
and isomorphisms
$$
H^i_{\proeet}(Y_{C},\Q_p)\simeq 
\begin{cases} \Q_p &{\text{if $i=0$,}}\\ \O(Y_{C})/\C_p &{\text{if $i=1$,}}\\
0 &{\text{if $i\geq 2$.}}\end{cases}
$$
Since the groups $\O(Y_{C})/C \simeq H^0(Y_{C},\Omega^1)$ (via $f\mapsto df$)
 and $\O(\partial Y_{C})/\O(Y_{C})\simeq H^1_c(Y_{C},\O)$ are in
Serre duality over $C$, 
 the above computation of $\Ext$-groups in the TVS-category suggests that 
\begin{equation}\label{sad21}
H^1_{\proeet,c}(Y_C,\Q_p)\simeq \underline{\rm Ext}^1_{\rm TVS}(\mathbb{H}^2_{\proeet,c}(Y_C,\Q_p(1)),\Q_p).
\end{equation}
Similarly, one guesses that we should have an exact sequence ($\underline{\Ext}^i$ is the $\Ext^i$ in the TVS-category)
\begin{equation}\label{sad2}
0\to \underline{\rm Ext}^1_{\rm TVS}(\mathbb{H}^1_{\proeet}(Y_C,\Q_p),\Q_p)\to
H^2_{\proeet,c}(Y_C,\Q_p(1))\to \underline{\Hom}_{\rm TVS}(\mathbb{H}^0_{\proeet}(Y_C,\Q_p),\Q_p)\to 0
\end{equation}
This is because we should  have
\begin{align*}
\so(\partial Y_C)/\so(Y_C) & \simeq \underline{\Ext}^1_{\rm TVS}(\mathbb{H}^1_{\proeet}(Y_C,\Q_p),\Q_p),\\
\Q_p& \simeq \underline{\Hom}_{\rm TVS}(\mathbb{H}^0_{\proeet}(Y_C,\Q_p),\Q_p).
\end{align*}
\subsubsection{Statement}  Let $Y$ be a smooth, partially proper rigid analytic variety over $K$ of dimension $d$. Let  ${\mathbb R}_{\proeet,*}(Y_C,\Q_p(r))$ be  the TVS's representing  cohomology $\R\Gamma_{\proeet,*}(Y_C,\Q_p(r))$, i.e., we have 
$${\mathbb R}_{\proeet.*}(Y_C,\Q_p(r))({\rm Spa}(C))\simeq \R\Gamma_{\proeet,*}(Y_C,\Q_p(r)).
$$
The above computations led to the following conjecture: 
\begin{conjecture}\label{poin5}
{\rm(Geometric duality, \cite[Th. 1.7]{CGN1})} There is a natural duality quasi-isomorphism in TVS-category
\begin{equation}\label{dual1}
{\mathbb R}_{\proeet}(Y_C,\Q_p)\simeq \R\Hhom_{\rm TVS}({\mathbb R}_{\proeet,c}(Y_C,\Q_p(d))[2d],\Q_p).
\end{equation}
\end{conjecture}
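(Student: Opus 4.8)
The plan is to reduce the geometric Verdier duality to the already-established basic comparison theorem (Theorem~\ref{geo4}), its compactly supported analog (Theorem~\ref{main-kg}), and de Rham Poincaré duality, and then to glue these through the $\Ext$-computations of Section~\ref{poin2}. Concretely, I would proceed as follows. First, using the geometrized basic comparison theorem, I would express both ${\mathbb R}_{\proeet}(Y_C,\Q_p)$ and ${\mathbb R}_{\proeet,c}(Y_C,\Q_p(d))$ in terms of the refined de Rham packages $\{\rg_{\dr}(Y), {\rm Fil}^\bullet, \varphi, N\}$ via syntomic cohomology; on the compactly supported side one uses the compactly supported Hyodo--Kato and de Rham cohomologies and the $t^d$-twist that already appears in Corollary~\ref{affinoids-comp}. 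Thus the problem becomes a \emph{linear-algebra duality statement about $\varphi$-modules and filtered $K$-modules}, living on the Fargues--Fontaine curve: one wants a perfect pairing between the vector bundle ${\cal E}(H^i_{\hk}(Y_C),H^i_{\dr}(Y))$ and ${\cal E}(H^{2d-i}_{\hk,c}(Y_C),H^{2d-i}_{\dr,c}(Y))$ up to a twist by the line bundle ${\cal O}(1)^{\otimes ?}$ determined by the fundamental exact sequence~\eqref{fund1}.

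Second, I would establish that $\R\Hhom_{\rm TVS}(-,\Q_p)$ carries the four building-block TVS's into each other in a controlled way: by the computations in~\eqref{slonce2} together with the vanishing of ${\cal Ext}^{\geq 2}_{\rm TVS}$, the functor $\R\Hhom_{\rm TVS}(-,\Q_p)$ restricted to BC's is an exact anti-equivalence onto BC's, sending $\Q_p\mapsto\Q_p$ and ${\mathbb G}_a\mapsto {\mathbb G}_a[-1]$ (this last shift being exactly the source of the Verdier-type, rather than naive, duality). Applying this termwise to the exact sequence~\eqref{basic3} for $\R\Gamma_{\proeet,c}(Y_C,\Q_p(d))[2d]$ and matching it — via the Serre duality isomorphisms $H^i_c(Y_C,\Omega^j)\simeq H^{2d-i}(Y_C,\Omega^{d-j})^\dual$ and de Rham Poincaré duality $H^i_{\dr,c}(Y)\simeq H^{2d-i}_{\dr}(Y)^\dual$ — against the exact sequence~\eqref{basic3} for $\R\Gamma_{\proeet}(Y_C,\Q_p)$, one obtains a natural map~\eqref{dual1} and reduces its being a quasi-isomorphism to the $\varphi$-module statement above. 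The $t$-shifts bookkeeping is exactly what makes the Tate twist $(d)$ and the shift $[2d]$ on the right-hand side come out correctly; this is modelled on the heuristic in Section~\ref{poin6}.

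Third, I would prove the underlying statement on the Fargues--Fontaine curve. The key input is that the pairing $H^i_{\hk}(Y_C)\times H^{2d-i}_{\hk,c}(Y_C)\to H^{2d}_{\hk,c}(Y_C)\{-d\}\simeq C^{\nr}$ is a perfect pairing of $(\varphi,N)$-modules (compatible with the de Rham filtrations under $\iota_{\hk}$), which one has to know for the Beilinson-style Hyodo--Kato cohomology — this should follow from Poincaré duality for (log-)crystalline cohomology combined with the Hyodo--Kato rigidity isomorphism of Theorem~\ref{ht9}(2), descending the crystalline trace pairing. Given this, the duality~\eqref{dual1} becomes Serre duality for the associated modifications of vector bundles on $X_{\rm FF}$, where the modification-at-$\infty$ datum on the dual side is the annihilator of ${\rm Fil}^0$, i.e.\ the de Rham Poincaré-dual filtration — and one checks the line-bundle twist matches ${\cal O}(d)$ arising from $t^d$.

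The main obstacle, I expect, is \emph{not} the abstract duality bookkeeping but rather two technical points. First, one must run all of this at the level of \emph{presheaves on} ${\rm Perf}_C$ (and in the correctly rigidified, topologically enriched TVS-category of~\cite{TVS}), so that $\R\Hhom_{\rm TVS}$ genuinely computes the internal Verdier dual and so that the termwise duals of the qBC-pieces ${\mathbb X}^{r,i}$, ${\mathbb{DR}}^{r,i}$ reassemble into the geometrized $\R\Gamma_{\proeet,c}$ rather than merely abstractly matching on $C$-points; here one needs finiteness/coherence control on the de Rham side, which is why the clean statements require $Y$ partially proper (and the cleanest ones require $H^\bullet_{\dr}(Y)$ finite-dimensional, as in Proposition~\ref{geo5}). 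Second, and more seriously, extending everything past quasi-compact $Y$ runs into the $\R\lim$ problems for coherent cohomology already flagged in~\cite{COB1}; so I would expect the honest theorem to be proved first for small (finite-dimensional de Rham cohomology) $Y$, then for Stein $Y$ using the Fréchet fundamental diagram of Theorem~\ref{affinoids}, and only then attempted in general — with the duality of Hyodo--Kato trace pairings in the non-finite-rank setting being the delicate heart of the argument.
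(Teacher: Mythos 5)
Your overall strategy — reduce via the (geometrized) basic comparison theorem to duality statements for Hyodo--Kato and filtered de~Rham cohomology, and ground those in coherent Serre duality — is exactly the reduction that the paper describes as the conceptual spine of the proof in \cite{CGN1}. But two things are off, and they are precisely the two places the paper flags as the real work.

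First, your steps (2)--(3), which propose to apply $\R\Hhom_{\rm TVS}(-,\Q_p)$ termwise to the long exact sequence~\eqref{basic3} (using $\Q_p\mapsto\Q_p$, ${\mathbb G}_a\mapsto{\mathbb G}_a[-1]$ and the vanishing of ${\mathcal Ext}^{\geq 2}$) and then match against the other exact sequence, is the ``direct in TVS'' approach that the authors say they could not make work. The paper is explicit that ``the argument goes through the Fargues--Fontaine curve that geometrically separated the Hyodo--Kato and de~Rham parts (something that is more difficult to do directly in TVS's).'' The obstruction is real: dualizing a long exact sequence of BC/qBC's is not a termwise operation once the duality functor has a nontrivial ${\mathcal Ext}^1$ (i.e.\ once ${\mathbb G}_a$ shifts degree by one), and the resulting spectral sequences do not collapse for free. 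The cure in \cite{CGN1} is to lift the whole picture to (sheaves on) the Fargues--Fontaine curve $X_{\rm FF}$, where the Hyodo--Kato part (a vector bundle) and the de~Rham part (the modification at $\infty$) are genuinely separated objects, prove Serre duality there, and then descend to TVS's. The heavy lifting is in this descent, not in formal $\Ext$ bookkeeping in TVS, and you don't address it at all.

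Second, you place ``the delicate heart'' in proving perfectness of the Hyodo--Kato trace pairing in the non-finite-rank setting, and you propose to establish it via log-crystalline Poincaré duality plus Hyodo--Kato rigidity. The paper instead reduces the Hyodo--Kato duality to the de~Rham one directly through $\iota_{\hk}$ and coherent Serre duality — a cleaner route that avoids needing a crystalline trace pairing for the (potentially pathological) formal models produced by \'etale-local alterations — and locates the actual difficulty in the descent from $X_{\rm FF}$ to TVS's. So your proposal is not wrong in spirit, but it swaps in a harder subroutine for the Hyodo--Kato step and omits the step that the paper identifies as the bottleneck; as written it would stall exactly where the paper says the naive TVS argument stalls.
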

This conjecture is now a theorem. There are two different proofs: by Ansch\"utz-Le Bras-Mann \cite{ALBM} and Colmez-Gilles-Nizio{\l} \cite{CGN1} (more precisely, the former replaces TVS's with pro-\'etale sheaves of $\Q_p$-modules).  We will briefly discuss these proofs in Section \ref{CGN}.
\begin{remark}\label{poin7}
\begin{enumerate}[leftmargin=*]
\item So far we do not have a setting in which a generalization of the duality \eqref{sad2} would hold. It fails in the condensed setting since there are too many extensions between Banach spaces.  We phantasize that it could work in the qBC-category. 
\item  Computations suggest that there should be a Poincar\'e  duality for the boundary $\partial Y$ of any smooth partially proper variety $Y$  over $K$. But, again, it is not clear in which setting this should hold. This would follow from the duality \eqref{dual1} if we had the duality "the other way" from (1). Let us state it: 
\begin{conjecture}{\rm (Duality for boundary pro-\'etale cohomology)}
 There is a natural  quasi-isomorphism in $\sd(?)$
\begin{align*}
 {\mathbb R}_{\proeet}(\partial Y_C,\Q_p)\stackrel{\sim}{\to} \R\Hhom_{?}({\mathbb R}_{\proeet}(\partial Y_C,\Q_p(d))[2d-1],\Q_p).
 \end{align*}
\end{conjecture}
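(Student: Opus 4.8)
The plan is to \emph{deduce} the boundary duality from the geometric Poincar\'e duality of Conjecture~\ref{poin5} (now a theorem) together with the localization triangle defining $\partial Y$, by a formal d\'evissage. By the very definition of compactly supported cohomology, for every twist we have a distinguished triangle of TVS's
$$\mathbb{R}_{\proeet,c}(Y_C,\Q_p(d))\longrightarrow \mathbb{R}_{\proeet}(Y_C,\Q_p(d))\longrightarrow \mathbb{R}_{\proeet}(\partial Y_C,\Q_p(d))\stackrel{+1}{\longrightarrow}.$$
First I would shift this by $[2d-1]$ and apply the contravariant functor $\R\Hhom_{?}(-,\Q_p)$; this yields a distinguished triangle whose first term is precisely the right-hand side of the conjecture, $\R\Hhom_{?}(\mathbb{R}_{\proeet}(\partial Y_C,\Q_p(d))[2d-1],\Q_p)$, and whose other two terms are the $\R\Hhom_{?}(-,\Q_p)$-duals, shifted by $[2d-1]$, of the absolute and of the compactly supported cohomology of $Y$.

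\textbf{Identifying the absolute terms.} Conjecture~\ref{poin5} identifies the dual of $\mathbb{R}_{\proeet,c}(Y_C,\Q_p(d))[2d]$ with $\mathbb{R}_{\proeet}(Y_C,\Q_p)$, so the compactly supported term of the dualized triangle is $\mathbb{R}_{\proeet}(Y_C,\Q_p)[1]$. For the other term one needs the \emph{reverse} duality $\R\Hhom_{?}(\mathbb{R}_{\proeet}(Y_C,\Q_p(d))[2d],\Q_p)\simeq\mathbb{R}_{\proeet,c}(Y_C,\Q_p)$; granting it, this term becomes $\mathbb{R}_{\proeet,c}(Y_C,\Q_p)[1]$, and the dualized triangle exhibits $\R\Hhom_{?}(\mathbb{R}_{\proeet}(\partial Y_C,\Q_p(d))[2d-1],\Q_p)$ as the fiber of a morphism $\mathbb{R}_{\proeet,c}(Y_C,\Q_p)[1]\to\mathbb{R}_{\proeet}(Y_C,\Q_p)[1]$. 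On the other hand, rotating and shifting the original triangle exhibits $\mathbb{R}_{\proeet}(\partial Y_C,\Q_p)$ as the fiber of the forget-supports map $\mathbb{R}_{\proeet,c}(Y_C,\Q_p)[1]\to\mathbb{R}_{\proeet}(Y_C,\Q_p)[1]$. So it remains to check that these two maps agree: that applying $\R\Hhom_{?}(-,\Q_p)$ to the forget-supports morphism and transporting through the two dualities recovers the forget-supports morphism. This is the compatibility of geometric duality ``with'' and ``without'' supports with the natural transformation $c\to {\rm id}$, and should follow from the fact that both dualities are induced by one common trace map $\mathbb{R}_{\proeet,c}(Y_C,\Q_p(d))[2d]\to\Q_p$ via cup products. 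Uniqueness of the fiber then gives the asserted quasi-isomorphism $\mathbb{R}_{\proeet}(\partial Y_C,\Q_p)\stackrel{\sim}{\to}\R\Hhom_{?}(\mathbb{R}_{\proeet}(\partial Y_C,\Q_p(d))[2d-1],\Q_p)$.

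\textbf{Main obstacle.} The crux is the reverse duality, which — granting Conjecture~\ref{poin5} — is equivalent to $\R\Hhom_{?}(-,\Q_p)$-reflexivity of $\mathbb{R}_{\proeet,c}(Y_C,\Q_p)$, and is inseparable from the choice of the category ``$\sd(?)$'': as emphasized in Remark~\ref{poin7}(1), a statement of the shape $\mathbb{R}_{\proeet,c}\simeq\R\Hhom(\mathbb{R}_{\proeet},\Q_p)$ is false both in the solid/condensed setting and in the bare TVS-category (too many extensions of Banach spaces), and is only expected to hold in the qBC-category. So one must first set up $\R\Hhom$ and biduality in the qBC-category (for $\mathbb{G}_a$ this is fine, since $\R\Hhom_{\rm TVS}(\mathbb{G}_a,\Q_p)\simeq\mathbb{G}_a[-1]$ by \eqref{slonce2}; the delicate point is the Topological $\bdr^+$-modules with infinite-dimensional coefficients), and then prove the reverse duality there — at least for $Y$ small, where $\mathbb{R}_{\proeet}(Y_C,\Q_p)$ and $\mathbb{R}_{\proeet,c}(Y_C,\Q_p)$ are qBC's. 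Once this category is fixed the rest of the argument is formal.

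\textbf{Alternative route.} One could instead attempt a direct proof, realizing the duality as a cup-product pairing $\R\Gamma_{\proeet}(\partial Y_C,\Q_p)\otimes^{\LL}\R\Gamma_{\proeet}(\partial Y_C,\Q_p(d))[2d-1]\to\Q_p$ with trace $H^{2d-1}_{\proeet}(\partial Y_C,\Q_p(d))\to H^{2d}_{\proeet,c}(Y_C,\Q_p(d))\to\Q_p$ — the $p$-adic avatar of the fact that the boundary of a compact manifold with boundary is a closed manifold of one dimension less. In dimension $1$ this might be accessible through the pattern-of-curve decomposition of Section~\ref{intro6.1}, gluing the duality from shorts and legs along the ``proper'' ghost circles; but in general this seems to require the same inputs as the d\'evissage, so I would favour the first route.
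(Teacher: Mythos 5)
Your d\'evissage is exactly the paper's own heuristic for this conjecture: dualize the localization triangle $\mathbb{R}_{\proeet,c}(Y_C,\Q_p(d))\to\mathbb{R}_{\proeet}(Y_C,\Q_p(d))\to\mathbb{R}_{\proeet}(\partial Y_C,\Q_p(d))$ defining compactly supported cohomology, then invoke the proved geometric duality~\eqref{dual1} and its hypothetical ``inverse'' (the reverse duality alluded to in Remark~\ref{poin7}(1), expected to live only in the qBC-category), and compare fibers. You have also correctly singled out the reverse duality and the choice of ambient category $\sd(?)$ as the crux, which is precisely the point the paper leaves open; the one thing to note is that the paper's ``Heuristics'' cites the triangle~\eqref{basic2}, which appears to be a typo for the localization triangle of~\eqref{main-def1} that you actually use.
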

\begin{proof}{\rm (Heuristics)}
Use the distinguished triangle \eqref{basic2} and dualize it. Then use the duality \eqref{dual1} and its hypothetical "inverse".
\end{proof}
This can be proved   in the arithmetic setting in dimension $1$ (see below).
\end{enumerate}
\end{remark}

 The following  pleasing corollary  is implied by   the duality \eqref{dual1} and the vanishing of TVS-$\Ext$-groups in dimensions $\geq 2$:
\begin{corollary}  {\rm (Verdier exact sequence, \cite[Cor.\,1.8]{CGN1}} Let $Y$ be a smooth Stein variety over $K$ of dimension $d$. Let $i\geq 0$. 
There exists a short exact sequence in {\rm TVS}'s
$$
0\to {\mathcal Ext}^1_{\rm TVS}({\mathbb H}^{2d-i+1}_{\proeet,c}(Y_C,\Q_p(d)),\Q_p)\to {\mathbb H}^i_{\proeet}(Y_C,\Q_p)\to{\mathcal Hom}_{\rm TVS}({\mathbb H}^{2d-i}_{\proeet,c}(Y_C,\Q_p(d)),\Q_p)\to 0
$$
\end{corollary}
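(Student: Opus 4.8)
The plan is to obtain the sequence formally from the geometric duality quasi-isomorphism of Conjecture~\ref{poin5} together with the vanishing of internal $\Ext$'s in degrees $\geq 2$ recorded in Section~\ref{poin2}; the only non-formal point is that this vanishing has to be available for the specific cohomology objects that occur.

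First I would describe the coefficient objects. For a smooth Stein variety $Y_K$ of dimension $d$, the compactly supported basic comparison theorem (Theorem~\ref{main-kg}) expresses $\mathbb{H}^n_{\proeet,c}(Y_C,\Q_p(d))$ as an iterated extension of the terms ${\mathbb X}^{r,\bullet}$ and ${\mathbb{DR}}^{r,\bullet}$, the latter being Topological $\Bdr^+$-Modules and the former built from Hyodo-Kato cohomology; together with Serre duality --- which realizes $H^d_c(Y_C,\Omega^j)$ as the continuous dual of the nuclear Fréchet space $H^0(Y_C,\Omega^{d-j})$, hence of the form $\mathbb{G}_a\otimes^{\Box}_C W$ --- this puts each $\mathbb{H}^n_{\proeet,c}(Y_C,\Q_p(d))$ in the class of objects built out of $\Q_p$ and $\mathbb{G}_a\otimes^{\Box}_C W$'s. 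It is for this class that one needs $\R\Hhom_{\rm TVS}(-,\Q_p)$ to be concentrated in degrees $0$ and $1$.

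Next I would apply $\R\Hhom_{\rm TVS}(-,\Q_p)$ to $\mathbb{R}_{\proeet,c}(Y_C,\Q_p(d))[2d]$ and run the hyper-$\Ext$ spectral sequence
$$E_2^{s,t}={\mathcal Ext}^s_{\rm TVS}\big(\mathbb{H}^{2d-t}_{\proeet,c}(Y_C,\Q_p(d)),\Q_p\big)\ \Longrightarrow\ \mathbb{H}^{s+t}\big(\R\Hhom_{\rm TVS}(\mathbb{R}_{\proeet,c}(Y_C,\Q_p(d))[2d],\Q_p)\big),$$
where the index $2d-t$ reflects the shift $[2d]$. By the vanishing ${\mathcal Ext}^s_{\rm TVS}(W_2,W_1)=0$ for $s\geq 2$ and $W_1,W_2$ BC's from Section~\ref{poin2}, propagated along the dévissage of the previous paragraph (the new graded pieces being $\mathbb{G}_a\otimes^{\Box}_C W$, for which ${\mathcal Ext}^s_{\rm TVS}(-,\Q_p)$ still vanishes for $s\geq 2$), only the columns $s=0,1$ of $E_2$ are nonzero; hence all higher differentials vanish, the spectral sequence degenerates at $E_2$, and for each $i$ one obtains a short exact sequence $0\to E_2^{1,i-1}\to \mathbb{H}^i(\R\Hhom_{\rm TVS}(\cdots))\to E_2^{0,i}\to 0$. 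Finally I would invoke Conjecture~\ref{poin5} to replace $\mathbb{H}^i(\R\Hhom_{\rm TVS}(\mathbb{R}_{\proeet,c}(Y_C,\Q_p(d))[2d],\Q_p))$ by $\mathbb{H}^i_{\proeet}(Y_C,\Q_p)$, and rewrite $E_2^{1,i-1}={\mathcal Ext}^1_{\rm TVS}(\mathbb{H}^{2d-i+1}_{\proeet,c}(Y_C,\Q_p(d)),\Q_p)$ and $E_2^{0,i}={\mathcal Hom}_{\rm TVS}(\mathbb{H}^{2d-i}_{\proeet,c}(Y_C,\Q_p(d)),\Q_p)$, which is the asserted sequence; exactness and the TVS-structure of the maps are built into the construction.

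The main obstacle is not the homological bookkeeping but its two inputs: the geometric duality quasi-isomorphism of Conjecture~\ref{poin5} itself, and the step that the BC-level $\Ext$-vanishing in degrees $\geq 2$ genuinely extends to the objects $\mathbb{H}^n_{\proeet,c}(Y_C,\Q_p(d))$ attached to a Stein $Y$ --- which requires understanding both the structure of these cohomology groups (as iterated extensions of $\Q_p$ and $\mathbb{G}_a\otimes^{\Box}_C W$) and the behaviour of $\R\Hhom_{\rm TVS}$ in the, possibly infinite-dimensional, compact-type module $W$, a point sensitive to the topological enrichment of TVS on strictly totally disconnected perfectoid test objects.
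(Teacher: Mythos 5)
Your proposal follows the route the paper indicates: apply $\R\Hhom_{\rm TVS}(-,\Q_p)$ to the duality quasi-isomorphism of Conjecture~\ref{poin5}, and use the vanishing of ${\mathcal Ext}^{\geq 2}_{\rm TVS}$ to collapse the resulting hyper-$\Ext$ spectral sequence to a two-term exact sequence. You have also correctly located the one genuinely delicate point --- that the ${\mathcal Ext}^{\geq 2}$-vanishing stated in Section~\ref{poin2} for BC's must be propagated to the possibly infinite-dimensional qBC objects ${\mathbb H}^n_{\proeet,c}(Y_C,\Q_p(d))$ (graded pieces of the form ${\mathbb G}_a\otimes^{\Box}_C W$ with $W$ a nuclear Fr\'echet dual), which is exactly the functional-analytic input the full proof in \cite{CGN1} has to supply.
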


\subsubsection{Proofs of Conjecture \ref{poin5}}\label{CGN}

 We start with the proof of the  geometric Poincar\'e duality from \cite{CGN1}.
  Just as in Section \ref{poin6}, via the basic comparison theorem (see Theorem \ref{basic1} and its compactly supported analog) the duality for $p$-adic pro-\'etale cohomology can be reduced to the one for Hyodo-Kato  and filtered de Rham cohomologies.
 The first of those, in turn, can be reduced (as in \cite{AGN}), via the Hyodo-Kato isomorphism, to the second one (which follows from coherent Serre duality).
 That should have been enough to give a proof of pro-\'etale duality however, at the moment, the argument goes through the Fargues-Fontaine curve that geometrically separated the Hyodo-Kato and de Rham parts (something that is more difficult to do directly in TVS's) and makes the bootstrapping  of the de Rham-dualities up to pro-\'etale cohomology easy.
 The weight of the argument then moves to controlling  the descent from the Fargues-Fontaine curve to TVS's, a nontrivial task. 

    The proof in \cite{ALBM} also proceeds  in two steps.
 The duality on the Fargues-Fontaine curve is obtained from a 6-functor
 formalism for solid quasi-coherent sheaves.
 The descent is now not to TVS's  but to the algebraic category of pro-\'etale $\Q_p$-sheaves.
 Like all the descents we have so far in this topological setting it does not work perfectly and for it to work here one needs to know functional analytic properties of the solid quasi-coherent sheaves involved in the duality theorem.
 That is derived from the comparison theorems for period sheaves (see   Theorem \ref{CGN21} and  Remark \ref{BoscHK}). 

  The  method from \cite{ALBM} allows to prove more general dualities than the ones stated in Conjecture \ref{poin5}.
 For example, it yields a Poincar\'e duality for smooth proper rigid analytic spaces and $\Q_p$-local systems.
 This case is interesting because the pro-\'etale cohomology groups are BC's but not finite rank over $\Q_p$, in general.
 Another approach to this result was developed recently by Li-Nizio{\l}-Reinecke-Zavyalov \cite{LNRZ}.

\subsection{Arithmetic duality}\label{arit2}
Arithmetic duality for smooth partially proper analytic spaces was easier  
to understand than its geometric counterpart because it did not involve nontrivial $\Ext$-groups. Moreover,  in the Stein case there it has  a nonderived version (both ways). 

\begin{theorem}\label{poin8}
{\rm(Arithmetic duality, \cite[Th. 1.1]{CGN}, \cite[Th. 1.1]{ZL})}  Let $Y$ be a smooth Stein  rigid analytic variety over $K$ of dimension $d$.
There are natural duality isomorphisms 
\begin{align*}
  H^{i}_{\proeet,c}(Y,\Q_p) & \stackrel{\sim}{\to} \underline{\Hom}(H^{2d+2-i}_{\proeet}(Y,\Q_p(d+1)),\Q_p),\\
  H^{i}_{\proeet}(Y,\Q_p) & \stackrel{\sim}{\to} \underline{\Hom}(H^{2d+2-i}_{\proeet,c}(Y,\Q_p(d+1)),\Q_p).
\end{align*}
\end{theorem}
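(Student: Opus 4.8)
The plan is to construct the duality pairing as a cup product followed by a trace, and to reduce its perfectness, via the basic arithmetic comparison theorem (Theorem~\ref{basicA}), to coherent Serre duality for the de Rham and Hyodo--Kato cohomologies of $Y$ together with local Tate duality for the base field~$K$. First I would fix the trace map $H^{2d+2}_{\proeet,c}(Y,\Q_p(d+1))\to\Q_p$ and the cup product $H^i_{\proeet,c}(Y,\Q_p)\otimes H^{2d+2-i}_{\proeet}(Y,\Q_p(d+1))\to H^{2d+2}_{\proeet,c}(Y,\Q_p(d+1))$; the trace is naturally the composite of the ``geometric'' trace (coherent duality along the $d$-dimensional space $Y$) with the local trace $H^2(G_K,\Q_p(1))\simeq\Q_p$, which is why the total shift is $2d+2$ and the total twist is $d+1$ rather than $2d$ and $d$. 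Then, working in a stable range of Tate twists, Theorem~\ref{basicA} (and its compactly supported variant obtained from the fiber sequence $\R\Gamma_{\proeet,c}(Y,\Q_p(r))=[\R\Gamma_{\proeet}(Y,\Q_p(r))\to\R\Gamma_{\proeet}(\partial Y,\Q_p(r))]$) reduces the statement to a perfect pairing between the arithmetic Bloch--Kato syntomic complexes $\R\Gamma^{\rm BK}_{\synt}(Y,\Q_p(\bullet))=[[\R\Gamma_{\hk}(Y)]^{N=0,\varphi=p^\bullet}\stackrel{\iota_{\hk}}{\longrightarrow}\R\Gamma_{\dr}(Y)/{\rm Fil}^\bullet]$ and their compactly supported analogues (built from $\R\Gamma_{\hk,c}(Y)$ and $\R\Gamma_{\dr,c}(Y)$), in complementary weights; the rest of the proof then lives purely in the world of de Rham, Hyodo--Kato and Galois cohomology.

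The two coherent inputs are, first, \emph{de Rham Poincar\'e duality for Stein varieties}: on each affinoid of a Stein exhaustion of $Y$ one has coherent Serre duality, and passing to $\R\lim$ on the Fr\'echet complex $\R\Gamma_{\dr}(Y)$ and to $\colim$ on the compact-type complex $\R\Gamma_{\dr,c}(Y)$ (van der Put's compactly supported de Rham cohomology) yields a perfect topological pairing $\R\Gamma_{\dr}(Y)\otimes\R\Gamma_{\dr,c}(Y)[2d]\to K$, strictly compatible with the Hodge filtration so that ${\rm Fil}^r$ is the exact annihilator of the corresponding complementary filtration step on the dual side; second, \emph{Hyodo--Kato Poincar\'e duality}, obtained by transporting the de Rham pairing through the Hyodo--Kato isomorphism $\iota_{\hk}$ and descending to $F$, giving a perfect pairing $\R\Gamma_{\hk}(Y)\otimes\R\Gamma_{\hk,c}(Y)[2d]\to F$ that is Frobenius-equivariant up to the evident Tate twist and compatible with~$N$. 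The $\R\lim/\colim$ exchange introduces no $\varprojlim^1$ term because the Stein exhaustion supplies the Mittag--Leffler property; this is the structural reason the arithmetic duality is a genuine $\underline{\Hom}$-duality and not a derived one. The last input is that the functor $M\mapsto[M]^{N=0,\varphi=p^r}$ on $(\varphi,N)$-modules over $F$ and the functor $\R\Gamma_{\dr}/{\rm Fil}^r$ are glued, in the mapping fiber defining $\R\Gamma^{\rm BK}_{\synt}$, in exactly the pattern of the fundamental exact sequence $0\to\Q_p(r)\to(\B_{\rm cris}^+)^{\varphi=p^r}\to\B_{\rm dR}^+/{\rm Fil}^r\to 0$ and of local Tate duality $H^\bullet(G_K,\Q_p(r))\times H^{2-\bullet}(G_K,\Q_p(1-r))\to\Q_p$; combining the three inputs turns the syntomic complex in weight $r$ into the $\Q_p$-dual, shifted by $2d+2$, of the compactly supported syntomic complex in weight $d+1-r$.

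Unwinding this through the comparison gives the asserted isomorphisms, with non-degeneracy checked on the nose by reducing to $Y=\A^d_K$ --- whose pro-\'etale (equivalently syntomic) cohomology was computed in Section~\ref{nowy1} --- and propagating via a K\"unneth formula. A point worth stressing is that at no stage does one have to dualise a copy of $C$: all the vector spaces occurring are finite extensions of $\Q_p$ (de Rham and Hyodo--Kato cohomology of $Y$ are defined over $K$ and over $F$, not over $C$) or Fr\'echet/compact-type spaces over them, which is exactly why, as indicated in Section~\ref{arit2}, the arithmetic case avoids the nontrivial $\Ext$-groups that force the geometric duality to be of Verdier type.

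The main obstacle will be topological: all the exact sequences produced by Theorem~\ref{basicA} and by Serre duality must consist of \emph{strict} morphisms with closed image, so that $\underline{\Hom}(-,\Q_p)$ stays exact along them, which forces one to work in ${\rm LH}(C_{\Q_p})$ (or with solid $\Q_p$-modules) and to reckon with the fact that $H^\bullet_{\hk}(Y)$ need not be separated. As in the proof of Theorem~\ref{intro1.1}, one replaces $H^\bullet_{\hk}(Y)$ by its maximal separated quotient and must control the discrepancy, which is where the subtleties with the exponential map and universal denominators resurface. A secondary difficulty is to pin down the trace map canonically and to verify that the square relating the syntomic pairing, the cup product on pro-\'etale cohomology, and the coherent traces genuinely commutes; a third is the bookkeeping required to pass between the stable-twist regime, where the comparison applies, and the weights $0$ and $d+1$ appearing in the theorem.
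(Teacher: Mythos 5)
Your overall strategy---construct a cup-product pairing, use the basic arithmetic comparison (Theorem~\ref{basicA}) together with the fiber sequence defining $\R\Gamma_{\proeet,c}$, and reduce perfectness to coherent Serre duality for de~Rham and Hyodo--Kato cohomology of a Stein exhaustion plus local Tate duality for $G_K$---is in the right spirit, and the diagnosis of the two extra shifts and the extra twist coming from $H^2(G_K,\Q_p(1))\simeq\Q_p$ is correct. The concern about strictness, Mittag--Leffler, and the non-separatedness of $H^\bullet_{\hk}$ is also the right thing to worry about. But there are two genuine gaps.

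First, you never say what happens on the boundary, and the survey explicitly flags this as \emph{the} key input. Compactly supported cohomology is defined as $[\R\Gamma_{\proeet}(Y)\to\R\Gamma_{\proeet}(\partial Y)]$; dualising this triangle and comparing with the asserted isomorphisms forces a self-duality on $\R\Gamma_{\proeet}(\partial Y,\Q_p(j))$ with shift $2d+1$ and twist $d+1$ (for $d=1$ this is exactly Theorem~\ref{main-arithmeticY0}, shift $3$, twist $2$). This boundary duality cannot be waved through your reduction: $\partial Y$ is not a rigid analytic variety, is not Stein, and its cohomology lives outside the range where Theorem~\ref{basicA} applies as stated. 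The ghost circle behaves like a proper variety of ``real'' dimension one, and establishing its self-duality with the odd shift $3$ is precisely where the shift bookkeeping ($2d+2$ for $Y$, $2d+1$ for $\partial Y$) is verified. Your proposal treats the boundary as if it will dualise for free alongside everything else; that step needs an argument of its own, and in \cite{CGN} it is the hard one.

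Second, the closing sentence---``non-degeneracy checked on the nose by reducing to $Y=\A^d_K$ \dots and propagating via a K\"unneth formula''---cannot be made to work and is also internally inconsistent with the rest of your plan. A general smooth Stein variety is not a product of affine lines, so a K\"unneth argument from $\A^d_K$ produces duality only for products of affine spaces, not for the class of varieties in the theorem. Moreover, if the reduction to Serre duality for $\R\Gamma_{\dr}(Y)\otimes\R\Gamma_{\dr,c}(Y)[2d]\to K$ (and its Hyodo--Kato analogue) and to local Tate duality has already been carried out, non-degeneracy is immediate from those perfect pairings; there is nothing left to check on $\A^d_K$. What \emph{does} need to be checked---and is not covered by Serre duality alone---is the compatibility between the cup product on pro-\'etale cohomology, the syntomic pairing you built, and the trace, together with the boundary self-duality above. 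Those are the verifications that the $\A^d_K$ computation can sanity-check but not replace.

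A smaller point: the assertion that ``at no stage does one have to dualise a copy of $C$'' because everything is a finite extension of $\Q_p$ is misleading. $H^\bullet_{\dr}(Y)$ and $H^\bullet_{\hk}(Y)$ are infinite-dimensional Fr\'echet $K$- (resp.\ $F$-) spaces; the reason the arithmetic duality is a nonderived $\underline{\Hom}$-duality is that these are Fr\'echet/compact-type spaces in good duality, and that the exact sequences of Theorem~\ref{basicA} consist of strict maps with closed image, not that the groups are finite over $\Q_p$.
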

A key result  in the proof of arithmetic duality for analytic curves in \cite{CGN} is a Poincar\'e duality for the ghost circle\footnote{Here the pro-\'etale cohomology needs to be defined appropriately.}. 
\begin{theorem}{\rm (Arithmetic duality for ghost circle)}\label{main-arithmeticY0} Let $D$ be an     open disk over $K$.  Let $Y:=\partial D$
be the ``ghost circle''.
 There is a natural duality  isomorphism of solid $\Q_p$-vector spaces
\begin{align*}
\quad H^i_{\proeet}(Y,\Q_p(j))\stackrel{\sim}{\to} H^{3-i}_{\proeet}(Y,\Q_p(2-j))^*.
\end{align*}
\end{theorem}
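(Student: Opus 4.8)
The idea (in the spirit of the heuristic of Remark~\ref{poin7}(2)) is to deduce the duality for the ghost circle $Y=\partial D$ from arithmetic Poincar\'e duality for the open disk $D$, using that $\R\Gamma_{\proeet}(\partial D,\Q_p(n))$ is, by the definition \eqref{main-def1} of compactly supported cohomology, the cofibre of $\R\Gamma_{\proeet,c}(D,\Q_p(n))\to\R\Gamma_{\proeet}(D,\Q_p(n))$; this cofibre is the ``appropriately defined'' pro-\'etale cohomology referred to in the footnote. Below, $(-)^{*}$ denotes the solid $\Q_p$-linear derived dual, as in the statement.

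\textbf{Reduction to the disk.} One first establishes arithmetic Poincar\'e duality for $D$ in the two dual forms
$$\R\Gamma_{\proeet,c}(D,\Q_p(m))\simeq\R\Gamma_{\proeet}(D,\Q_p(2-m))^{*}[-4],\qquad \R\Gamma_{\proeet}(D,\Q_p(m))\simeq\R\Gamma_{\proeet,c}(D,\Q_p(2-m))^{*}[-4],$$
naturally in $m$; the shift $4=2+2$ and the dualising twist $\Q_p(2)=\Q_p(1)\otimes\Q_p(1)$ are the combined contributions of the $1$-dimensional coherent part of $D$ and of $G_K$ (for which $\mathrm{cd}(G_K)=2$ and $H^{2}(G_K,\Q_p(1))=\Q_p$). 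Applying $(-)^{*}[-4]$ to the localisation triangle $\R\Gamma_{\proeet,c}(D,\Q_p(2-j))\to\R\Gamma_{\proeet}(D,\Q_p(2-j))\to\R\Gamma_{\proeet}(\partial D,\Q_p(2-j))\xrightarrow{+1}$ and substituting these quasi-isomorphisms identifies the result with the (rotated) localisation triangle for twist $j$, whence $\R\Gamma_{\proeet}(\partial D,\Q_p(j))\simeq\R\Gamma_{\proeet}(\partial D,\Q_p(2-j))^{*}[-3]$. Since the higher solid $\Ext$-groups among the building blocks vanish, this descends to the stated degreewise isomorphism of solid $\Q_p$-vector spaces. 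Two points require attention: every complex occurring must be reflexive in $\sd(\Q_{p,\Box})$ (so that $(-)^{**}=\mathrm{id}$); and the connecting map of the dualised triangle must agree with that of the original one, i.e.\ the disk dualities must be compatible with the excision map $\R\Gamma_{\proeet}(D,-)\to\R\Gamma_{\proeet}(\partial D,-)$.

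\textbf{Duality for the disk.} This is \emph{elementary} and does not invoke the general duality theorem. Since $D$ is Stein with $H^{0}_{\dr}(D)=K$, $H^{\geq1}_{\dr}(D)=0$ and $H^{\geq1}_{\hk}(D)=0$, the basic comparison theorem (Theorem~\ref{basic1}) and its compact-support analogue (Theorem~\ref{main-kg}), in solid topological form, present $\R\Gamma_{\proeet}(D_{C},\Q_p(m))$ and $\R\Gamma_{\proeet,c}(D_{C},\Q_p(m))$ as explicit iterated extensions of: the nuclear Fr\'echet space $\O(D_{C})/C\simeq H^{0}(D_{C},\Omega^{1})$ --- resp., for compact support, $H^{1}_{c}(D_{C},\O)\simeq\O(\partial D_{C})/\O(D_{C})$ --- twisted by $\Q_p(m-1)$, together with copies of $\Q_p(m)$ and $\Q_p(m-1)$. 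Passing to $\R\Gamma(G_K,-)$ and assembling two classical perfect pairings --- coherent Serre duality on the disk over $C$, pairing $H^{0}(D_{C},\Omega^{1})$ with $H^{1}_{c}(D_{C},\O)$ (and trivial on the vanishing Hyodo--Kato part via $\iota_{\hk}$), and local Tate duality, pairing $H^{a}(G_K,\Q_p(m))$ with $H^{2-a}(G_K,\Q_p(1-m))$ --- yields the two displayed dualities; gluing the Serre pairings along the excision sequence $0\to\O(D_{C})\to\O(\partial D_{C})\to\O(\partial D_{C})/\O(D_{C})\to0$ is exactly the coherent self-duality (up to twist, via residues) of $\O(\partial D_{C})$ that underlies ``$\partial D$ behaves like a proper variety of real dimension $1$''. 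The resonant twists $m\in\{0,1,2\}$ need separate bookkeeping with $H^{\bullet}(G_K,\Q_p)$ and $H^{\bullet}(G_K,\Q_p(1))$; the only nontrivial point there is the vanishing of the relevant boundary maps and of the single Hochschild--Serre differential, which one checks via the $K$-rational classes $\mathrm{dlog}\,T$ (the ``loop'') and $f\in\O(D_{K})$ (the de Rham part). The trace $H^{3}_{\proeet}(\partial D_{K},\Q_p(2))\to\Q_p$ (equivalently $H^{4}_{\proeet,c}(D_{K},\Q_p(2))\to\Q_p$) is the composite of the coherent residue $H^{1}_{c}(D_{C},\Omega^{1})\to C$ with the Tate trace $H^{2}(G_K,\Q_p(1))\to\Q_p$, and the duality isomorphism is cup-product against it; this pins down the natural map of the statement.

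\textbf{Main obstacle.} The geometric inputs (Serre duality on the disk, local Tate duality) are classical and the cohomology of $D$ is as simple as possible; the genuine difficulty is functional-analytic, and is what the footnote's ``defined appropriately'' conceals. One must (i) make the basic comparison theorem for $D$ and for $D$-with-compact-support precise enough in $\sd(\Q_{p,\Box})$ that no derived-limit pathology arises, neither in the localisation triangle nor in the colimit $\O(\partial D)=\varinjlim_{W}\O(D\setminus W)$ defining the ghost circle; (ii) prove reflexivity of every complex occurring, so that $(-)^{**}=\mathrm{id}$ and the dualised triangle matches the original term by term; and (iii) carry out the two compatibilities --- cup-product equals the assembled Serre and Tate pairings, and the disk dualities respect excision. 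These three points, rather than the cohomology computations, carry the weight of the proof.
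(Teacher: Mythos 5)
Your overall strategy is to establish arithmetic Poincar\'e duality for the open disk $D$ directly, and then deduce the ghost-circle duality by dualising the localisation triangle that defines $\R\Gamma_{\proeet,c}(D,-)$. The formal diagram chase is correct: with the disk dualities $\R\Gamma_{\proeet,c}(D,\Q_p(m))\simeq\R\Gamma_{\proeet}(D,\Q_p(2-m))^{*}[-4]$ in hand (and granting reflexivity plus compatibility with the excision map), rotating and dualising the triangle does yield $\R\Gamma_{\proeet}(\partial D,\Q_p(j))\simeq\R\Gamma_{\proeet}(\partial D,\Q_p(2-j))^{*}[-3]$, and hence the stated degreewise isomorphism once higher solid $\Ext$'s are killed. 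You also identify the right genuine difficulties (reflexivity of all complexes, and the compatibility of the pairing with the boundary map).

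However, the route is inverted with respect to the one indicated by the paper. The sentence preceding the theorem --- ``a key result in the proof of arithmetic duality for analytic curves in \cite{CGN} is a Poincar\'e duality for the ghost circle'' --- tells you that in \cite{CGN} the logical order is the opposite: the ghost-circle duality is established first, as the basic ``proper atom'' of the theory (hence the remark after the theorem that $\partial D$ behaves like a proper variety of arithmetic dimension $\tfrac32$), and is then fed into a d\'evissage to prove the arithmetic Stein duality (Theorem~\ref{poin8}), of which the disk is a special case. You instead treat the disk as the primary object and obtain the ghost circle as a corollary. This is defensible but somewhat circuitous, because the crucial analytic input in your ``elementary'' proof of the disk duality --- what you call ``the coherent self-duality (up to twist, via residues) of $\O(\partial D_C)$'' that glues the two Serre pairings along $0\to\O(D_C)\to\O(\partial D_C)\to\O(\partial D_C)/\O(D_C)\to 0$ --- \emph{is} precisely the coherent content of the ghost-circle duality. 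In other words, your disk-first packaging re-derives pro-\'etale duality for $\partial D$ from a statement (residue self-duality of $\O(\partial D_C)$ combined with Tate duality for $G_K$) that already essentially contains it; the paper's approach makes that statement the theorem rather than a hidden lemma. There is also a latent circularity risk: if, as the quoted sentence suggests, the Stein duality of Theorem~\ref{poin8} --- even for the disk --- is proved in \cite{CGN} \emph{via} the ghost-circle duality (through a decomposition into shorts and legs and gluing along ghost circles, in the spirit of Section~3), then citing ``arithmetic Poincar\'e duality for $D$'' as a known input is not available, and everything rests on your claim that the disk case can be done by hand; that claim is plausible given the degenerate cohomology of $D$, but it is the one step you would need to make fully precise before the localisation argument can be said to prove the theorem.
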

Hence $\partial D_C$ behaves like a proper variety of dimension $\frac{1}{2}$ (since 
the ``arithmetic variety'' $\partial D$
behaves like a proper variety of dimension~$\frac{3}{2}$), hence
like a proper variety of "real" dimension $1$ (as in the archimedean world!).

\end{document}